\newtheorem{lem}{Lemma}
\newtheorem{thm}{Theorem}
\newtheorem{defn}{Definition}
\newtheorem{prop}{Proposition}
\newtheorem{cor}{Corollary}
\newtheorem{oss}{Remark}
\numberwithin{equation}{section}
\begin{document}

\date{}
\title{{\large \textbf{ON NONLOCAL CAHN-HILLIARD-NAVIER-STOKES \\
SYSTEMS IN TWO DIMENSIONS}}}
\author{ \textsc{Sergio Frigeri} \\
%EndAName
Weierstrass Institute\\
10117 Berlin, Germany\\
\textit{SergioPietro.Frigeri@wias-berlin.de} \\
\\
\textsc{Ciprian G. Gal}\\
Department of Mathematics, Florida International University\\
Miami, FL 33199, USA\\
\textit{cgal@fiu.edu}\\
\\
\textsc{Maurizio Grasselli}\\
Dipartimento di Matematica, Politecnico di Milano\\
Milano 20133, Italy \\
\textit{maurizio.grasselli@polimi.it}}
\maketitle

\begin{abstract}
\noindent We consider a diffuse interface model which describes the motion
of an incompressible isothermal mixture of two immiscible fluids. This model
consists of the Navier-Stokes equations coupled with a convective nonlocal
Cahn-Hilliard equation. Several results were already proven by two of the
present authors. However, in the two-dimensional case, the uniqueness of
weak solutions was still open. Here we establish such a result even in the
case of degenerate mobility and singular potential. Moreover, we show the
weak-strong uniqueness in the case of viscosity depending on the order
parameter, provided that {\color{black} either the mobility is constant and the potential is
regular or the mobility is degenerate and the potential is singular}.
In the case of constant viscosity, on account of the uniqueness
results we can deduce the connectedness of the global attractor whose
existence was obtained in a previous paper. The uniqueness technique can be
adapted to show the validity of a smoothing property for the difference of
two trajectories which is crucial to establish the existence of an
exponential attractor. The latter is established even in the case of variable
viscosity, constant mobility and regular potential. \newline

\noindent \textbf{Keywords}: Incompressible binary fluids, Navier-Stokes
equations, nonlocal Cahn-Hilliard equations, weak solutions, uniqueness,
strong solutions, global attractors, exponential attractors. \newline
\newline
\textbf{AMS Subject Classification 2010}: 35Q30, 37L30, 45K05, 76D03, 76T99.
\end{abstract}

\section{Introduction}

\setcounter{equation}{0} In a series of recent papers (see \cite%
{CFG,FG1,FG2,FGK,FGR}) the following nonlinear evolution system has been
analyzed {\color{black}
\begin{align}
&u_t-2\mbox{div}(\nu(\varphi)Du)+(u\cdot\nabla)u+\nabla\pi=\mu\nabla%
\varphi+h(t),  \label{sy3} \\
&\mbox{div}(u)=0,  \label{sy4} \\
& \varphi_t+u\cdot\nabla\varphi=\mbox{div}(m(\varphi)\nabla\mu),  \label{sy1}
\\
&\mu=a\varphi-J\ast\varphi+F^\prime(\varphi),  \label{sy2}
\end{align}
}on a bounded domain $\Omega\subset \mathbb{R}^d,$ $d=2,3$, for $t>0$. This
system describes the evolution of an isothermal mixture of two
incompressible and immiscible fluids through the (relative) concentration $%
\varphi$ of one species and the (averaged) velocity field $u$. Here $m$
denotes the mobility, $\mu$ is the so-called chemical potential, $J$ is a
spatial-dependent interaction kernel and $J\ast\varphi$ stands for spatial
convolution over $\Omega$, $a$ is defined as follows $a(x)=\int_\Omega
J(x-y)dy$, $F$ is a double well potential, $\nu$ is the viscosity and $h$ is
an external force acting on the mixture. The density is supposed to be
constant and equal to one (i.e., matched densities).

Such a system is the nonlocal version of the well-known
Cahn-Hilliard-Navier-Stokes system which has been the subject of a number of
papers (cf., e.g., \cite{A1, A2, B, CG, GG1, GG2, GG3, S, ZWH} and
references therein, see also the review \cite{Kim12} for modelling and
numerical simulation issues). We recall that the nonlocal term seems
physically more appropriate than its approximation, i.e., when in place of $%
a\varphi -J\ast \varphi $ there is $-\Delta \varphi $. For this issue, we
refer the reader to the basic papers \cite{GL0,GL1,GL2} (see also \cite%
{BH1,GG4,GZ,LP1,LP2}). However, from the mathematical viewpoint, the present
system is more challenging since the regularity of $\varphi $ is lower and
so the Korteweg force $\mu \nabla \varphi $ acting on the fluid can be less
regular than the convective term $(u\cdot \nabla )u$, even in dimension two
(cf. \cite[(3.7)]{CFG}). Therefore, it is not straightforward to extend some
of the results which holds for the Navier-Stokes equations as well as for
the standard Cahn-Hilliard-Navier-Stokes system. This is particularly
meaningful in dimension two. In fact, in dimension three, the only known
results are comparable with the standard ones for the Navier-Stokes
equations, namely, the existence of a global weak solution under various
assumptions on $m$ and $F$ and a generalized notion of attractor (cf. \cite%
{CFG,FG1,FG2,FGR}).

In dimension two, under reasonable assumptions on $F$ which ensure a
suitable regularity of $\varphi $, it is possible to prove that there exists
a weak solution which satisfies the energy identity. Therefore, such a
solution is strongly continuous in time (see \cite{CFG}). In addition,
taking advantage of the energy identity, it is also possible to prove the
existence of a the global attractor for the corresponding semiflow (cf. \cite%
{FG1,FG2,FGR}). More recently, in \cite{FGK}, assuming that $\nu $ and $m$ are
constant and taking a regular potential $F$, it has been shown the
existence of a (unique) strong solution and that any weak solution which
satisfies the energy identity regularizes in finite time. This entails some
smoothness for the global attractor. Also, the convergence of any weak to a
single equilibrium was established through the {\L }ojasiewicz-Simon
inequality approach. However, uniqueness of weak solutions was still an open
issue in \cite{CFG, FG1, FG2, FGR}.

The main goal of this paper is to prove the uniqueness of weak solutions
when $\nu $ is constant; while, when $\nu $ is non constant, we are able to
show {\color{black} the existence of a strong solution and then the weak-strong uniqueness}.
Uniqueness entails the connectedness of the
global attractor. In addition, modifying the uniqueness argument we can also
show the validity of a suitable smoothing property of the difference of two
trajectories (see \cite{EMZ,EZ}). This is the basic step to establish the
existence of an exponential attractor. The fractal dimension of the global
attractor is thus finite.

%\textbf{\ Finally the argument leading to
%convergence of strong solutions to single equilibria when }$\nu $\textbf{\
%is not constant can be also properly modified from \cite{FGK} in view of
%uniqueness.}

As in the previous contributions we take the following boundary and initial
conditions
\begin{align}
&\frac{\partial\mu}{\partial n}=0,\quad u=0\quad\mbox{on }
\partial\Omega\times (0,T)  \label{sy5} \\
&u(0)=u_0,\quad\varphi(0)=\varphi_0\quad\mbox{in }\Omega.  \label{sy6}
\end{align}

The plan of the paper is the following. In the next section we recall the
basic assumptions and the related existence of a weak solution. Section~\ref%
{Sec3} is devoted to the uniqueness of weak solutions for constant
viscosity. The weak-strong uniqueness is shown in Section~\ref{Sec4}. The
final Section~\ref{Sec5} is concerned with the connectedness of the global
attractor and the existence of an exponential attractor.

\section{Functional setup and preliminary results}

\label{Sec1}\setcounter{equation}{0}

Let us introduce the classical Hilbert spaces for the Navier-Stokes
equations with no-slip boundary condition (see, e.g., \cite{T})
\begin{equation*}
G_{div}:=\overline{\{u\in C^\infty_0(\Omega)^d:\mbox{ div}(u)=0\}}%
^{L^2(\Omega)^d},
\end{equation*}
and
\begin{equation*}
V_{div}:=\{u\in H_0^1(\Omega)^d:\mbox{ div}(u)=0\}.
\end{equation*}

We set $H:=L^2(\Omega)$, $V:=H^1(\Omega)$, and denote by $\|\cdot\|$ and $%
(\cdot,\cdot)$ the norm and the scalar product, respectively, on both $H$
and $G_{div}$.
%$H$ will also be used for $L^2$ spaces of vector or matrix valued functions.
The notation $\langle\cdot,\cdot\rangle$ will stand for
the duality pairing between a Banach space $X$ and its dual $X^{\prime}$. $%
V_{div}$ is endowed with the scalar product
\begin{equation*}
(u,v)_{V_{div}}=(\nabla u,\nabla v)=2(Du,Dv),\qquad\forall u,v\in V_{div},
\end{equation*}
%Let us also recall the definition of the Stokes operator  $A:D(A)\cap
%G_{div}\to G_{div}$ in the case of no-slip boundary
%condition \eqref{nslip}, i.e. $A=-P\Delta$ with domain $D(A)=H^2(\Omega)^d\cap V_{div}$,
%where $P:L^2(\Omega)^d\to G_{div}$ is the Leray projector. Notice that we have
%$$(Au,v)=(u,v)_{V_{div}}=(\nabla u,\nabla v),\qquad\forall u\in D(A),\quad\forall v\in V_{div}.$$
%We also recall that $A^{-1}:G_{div}\to G_{div}$ is a self-adjoint compact operator in $G_{div}$
%and by the classical spectral theorems there exists a sequence $\lambda_j$ with $0<\lambda_1\leq\lambda_2\leq\cdots$ and $\lambda_j\to\infty$,
%and a family of $w_j\in D(A)$ which is orthonormal in $G_{div}$ and such that $Aw_j=\lambda_jw_j$.

%We also define the map $\mathcal{A}:V_{div}\times H\to V_{div}'$ in the following way.
%For every $u\in V_{div}$ and every $\varphi\in H$ we set
%$$\langle\mathcal{A}(u,\varphi),v\rangle:=(\nu(\varphi)2Du,Dv),\qquad\forall v\in V_{div},$$
%where $\nu$ is a continuous function satisfying $\nu_1\leq\nu(s)\leq\nu_2$,
%for all $s\in\mathbb{R}$, with $\nu_1,\nu_2>0$. Notice that if $\nu=1$ we have
%$$\langle\mathcal{A}(u,\varphi),v\rangle=(2Du,Dv)=(\nabla u,\nabla v),\qquad\forall u, v\in V_{div},$$
%and hence in this case we have $\mathcal{A}(u,\varphi)=Au$ for every $u\in D(A)$. Moreover we have
%$$\Vert\mathcal{A}(u,\varphi)\Vert_{V_{div}'}\leq\nu_2\Vert u\Vert_{V_{div}},\qquad\forall u\in V_{div},\quad
%\forall\varphi\in H.$$
where $D$ is the symmetric gradient, defined by $Du:=(\nabla u+(\nabla
u)^{tr})/2$.

The trilinear form $b$ which appears in the weak formulation of the
Navier-Stokes equations is defined as usual
\begin{equation*}
b(u,v,w)=\int_{\Omega}(u\cdot\nabla)v\cdot w,\qquad\forall u,v,w\in V_{div},
\end{equation*}
and the associated bilinear operator $\mathcal{B}$ from $V_{div}\times
V_{div}$ into $V_{div}^{\prime}$ is defined by $\langle\mathcal{B}%
(u,v),w\rangle:=b(u,v,w)$, for all $u,v,w\in V_{div}$. We recall that we
have $b(u,w,v)=-b(u,v,w)$, for all $u,v,w\in V_{div}$, and that the
following estimate holds in dimension two
\begin{align*}
&|b(u,v,w)|\leq c\|u\|^{1/2}\|\nabla u\|^{1/2}\|\nabla
v\|\|w\|^{1/2}\|\nabla w\|^{1/2},\quad\forall u,v,w\in V_{div}.
\end{align*}
In particular we have the following standard estimate in 2D which holds for
all $u\in V_{div}$
\begin{align}
&\|\mathcal{B}(u,u)\|_{V_{div}^{\prime}}\leq c\|u\|\|\nabla u\|.
\label{standest2D}
\end{align}

For every $f\in V^{\prime }$ we denote by $\overline{f}$ the average of $f$
over $\Omega $, i.e., $\overline{f}:=|\Omega |^{-1}\langle f,1\rangle $.
Here $|\Omega |$ is the Lebesgue measure of $\Omega $. We assume that $%
\partial \Omega $ is smooth enough (say of class $\mathcal{C}^{2}$).

We also need to introduce the Hilbert spaces
\begin{equation*}
V_0:=\{v\in V:\overline{v}=0\},\qquad V_0^{\prime}:=\{f\in V^{\prime}:%
\overline{f}=0\},
\end{equation*}
and the operator $A_N:V\to V^{\prime}$, $A_N\in\mathcal{L}(V,V^{\prime})$,
defined by
\begin{equation*}
\langle A_N u,v\rangle:=\int_{\Omega}\nabla u\cdot\nabla v\qquad\forall
u,v\in V.
\end{equation*}
We recall that $A_N$ maps $V$ onto $V_0^{\prime}$ and the restriction $B_N$
of $A_N$ to $V_0$ maps $V_0$ onto $V_0^{\prime}$ isomorphically. Further, we
denote by $B_N^{-1}:V_0^{\prime}\to V_0$ the inverse map. % defined by
%$$AB_N^{-1}f=f,\quad\forall f\in V_0'\qquad\mbox{and}\qquad\mathcal{N}Au=u,\quad\forall u\in V_0.$$
As is well known, for every $f\in V_0^{\prime}$, $B_N^{-1}f$ is the unique
solution with zero mean value of the Neumann problem
\begin{equation*}
\left\{%
\begin{array}{ll}
-\Delta u=f,\qquad\mbox{in }\Omega &  \\
\frac{\partial u}{\partial n}=0,\qquad\mbox{on }\partial\Omega. &
\end{array}%
\right.
\end{equation*}
In addition, we have
\begin{align*}
&\langle A_N u,B_N^{-1}f\rangle=\langle f,u\rangle,\qquad\forall u\in
V,\quad\forall f\in V_0^{\prime}, \\
&\langle f,B_N^{-1}g\rangle=\langle
g,B_N^{-1}f\rangle=\int_{\Omega}\nabla(B_N^{-1}f)
\cdot\nabla(B_N^{-1}g),\qquad\forall f,g\in V_0^{\prime}.
\end{align*}

Furthermore, $B_N$ can be also viewed as an unbounded linear operator on $H$
with domain $D(B_N)=\{v\in H^2(\Omega):\partial_n v=0\mbox{ on }%
\partial\Omega\}$.

If $X$ is a Banach space and $\tau\in\mathbb{R}$, we shall denote by $%
L^p_{tb}(\tau,\infty;X)$, $1\leq p<\infty$, the space of functions $f\in
L^p_{loc}([\tau,\infty);X)$ that are translation bounded in $%
L^p_{loc}([\tau,\infty);X)$, that is,
\begin{align*}
&\Vert f\Vert_{L^p_{tb}(\tau,\infty;X)}^p:=\sup_{t\geq\tau}\int_t^{t+1}\Vert
f(s)\Vert_X^p ds<\infty.
\end{align*}

We now recall the result on existence of weak solutions and on the validity
of the energy identity and of a dissipative estimate in dimension two for
the nonlocal Cahn-Hilliard-Navier-Stokes system in the case of constant
mobility, nonconstant viscosity and regular potential. This is the main case
we shall deal with in this paper.

Let us list the assumptions (see \cite{CFG}).

\begin{description}
\item[(H1)] $J\in W^{1,1}(\mathbb{R}^d),\quad J(x)=J(-x),\quad a\geq 0\quad%
\mbox{a.e. in } \Omega$.
%\item[(A2)]The function $\nu$ is locally Lipschitz on $\mathbb{R}$ and there exist $\nu_1,\nu_2>0$ such that
%$$\nu_1\leq\nu(s)\leq \nu_2,\qquad\forall s\in\mathbb{R}.$$

\item[(H2)] The mobility $m(s)=1$ for all $s\in\mathbb{R}$, the viscosity $%
\nu$ is locally Lipschitz on $\mathbb{R}$ and there exist $\nu_1,\nu_2>0$
such that
\begin{equation*}
\nu_1\leq\nu(s)\leq \nu_2,\qquad\forall s\in\mathbb{R}.
\end{equation*}

\item[(H3)] $F\in C^{2,1}_{loc}(\mathbb{R})$ and there exists $c_0>0$ such
that
\begin{equation*}
F^{\prime\prime}(s)+a(x)\geq c_0,\qquad\forall s\in\mathbb{R},\quad%
\mbox{a.e. }x\in\Omega.
\end{equation*}
%\item[(A4)] There exist
% $c_1>\frac{1}{2}\|J\|_{L^1(\mathbb{R}^d)}$ and
% $c_2\in\mathbb{R}$ such that
%        $$F(s)\geq c_1s^2-c_2,\qquad\forall s\in\mathbb{R}.$$

\item[(H4)] $F\in C^2(\mathbb{R})$ and there exist $c_1>0$, $c_2>0$ and $q>0
$ such that
\begin{equation*}
F^{\prime\prime}(s)+a(x)\geq c_1\vert s\vert^{2q} - c_2, \qquad\forall s\in%
\mathbb{R},\quad\mbox{a.e. }x\in\Omega.
\end{equation*}

\item[(H5)] There exist $c_3>0$, $c_4\geq0$ and $r\in(1,2]$ such that
\begin{equation*}
|F^\prime(s)|^r\leq c_3|F(s)|+c_4,\qquad \forall s\in\mathbb{R}.
\end{equation*}
%\item[(A5)] $h\in V_{div}'$.
\end{description}

\begin{oss}
{\upshape
Assumption $J\in W^{1,1}(\mathbb{R}^d)$ can be weakened. Indeed, it can be
replaced by $J\in W^{1,1}(B_{\delta})$, where $B_{\delta}:=\{z\in\mathbb{R}%
^d:|z|<\delta\}$ with $\delta:=\mbox{diam}(\Omega)$, or also by (see, e.g.,
\cite{BH1})
\begin{equation*}
\sup_{x\in\Omega}\int_{\Omega}\big(|J(x-y)|+|\nabla J(x-y)|\big)dy<\infty.
\end{equation*}
}
\end{oss}

\begin{oss}
{\upshape
Since $F$ is bounded from below, it is easy to see that (H5) implies that $F$
has polynomial growth of order {\color{black}$r^{\prime}$}, where {\color{black}$%
r^{\prime}\in[2,\infty)$} is the conjugate index to {\color{black}$r$.}
Namely, there exist $c_5>0$ and $c_6\geq 0$ such that
\begin{equation}  \label{growth}
|F(s)|\leq c_5|s|^{{\color{black}r^{\prime}}}+c_6,\qquad\forall s\in\mathbb{R}.
\end{equation}
Observe that assumption (H5) is fulfilled by a potential of arbitrary
polynomial growth. For example, (H3)--(H5) are satisfied for the case of the
well-known double well potential $F(s)=(s^2-1)^2$.}
%In this case we take $p=4/3$ in (H4), while assumption (H2) is
%satisfied if and only if we have $a\geq c_0+m_0$, where
%$m_0=-\min_{s\in\mathbb{R}}F^{\prime\prime}(s)$.}
\label{Fgrowth}
\end{oss}

% The main results of
%\cite{CFG} concerning the existence of weak solutions (let us consider the 2D case only) and the validity of the energy identity are contained in the following
The following result follows from \cite[Theorem~1, Corollaries~1 and~2]{CFG}.

\begin{thm}
\label{thm}{\color{black} Assume that (H1)--(H5) are satisfied. Let $u_0\in
G_{div}$, $\varphi_0\in H$ such that $F(\varphi_0)\in L^1(\Omega)$ and $h\in
L^2_{loc}([0,\infty);V^\prime_{div})$.} %, $u_0\in G_{div}$%
%, $\varphi_0\in H$ such that $F(\varphi_0)\in L^1(\Omega)$ and suppose that
%(H1)-(H5) are satisfied.
Then, for every given $T>0$, there exists a weak solution $[u,\varphi]$ to %
\eqref{sy1}--\eqref{sy6} %(in the sense of Definition \ref{wfdfn})
such that
\begin{align}
&u\in L^{\infty}(0,T;G_{div})\cap L^2(0,T;V_{div}),\quad\varphi \in
L^\infty(0,T;L^{2+2q}(\Omega))\cap L^2(0,T;V),  \label{regpw1} \\
&u_t\in L^{4/3}(0,T;V_{div}^{\prime}),\quad\varphi_t\in
L^{4/3}(0,T;V^{\prime}),\qquad d=3,  \label{regpw2} \\
&u_t\in L^2(0,T;V_{div}^{\prime}),\quad d=2,  \label{regpw3} \\
&\varphi_t\in L^2(0,T;V^{\prime}),\quad d=2\quad\mbox{ or } \quad d=3
\mbox{
and } q\geq 1/2,  \label{regpw4}
\end{align}
and satisfying the energy inequality
\begin{equation}
\mathcal{E}(u(t),\varphi(t)) +\int_0^t\Big(2\|\sqrt{\nu(\varphi)}Du
\|^2+\|\nabla\mu \|^2\Big)d\tau \leq\mathcal{E}(u_0,\varphi_0)+\int_0^t%
\langle h(\tau),u \rangle d\tau,  \label{ei}
\end{equation}
for every $t>0$, where we have set
\begin{equation*}
\mathcal{E}(u(t),\varphi(t))=\frac{1}{2}\|u(t)\|^2+\frac{1}{4}
\int_{\Omega}\int_{\Omega}J(x-y)(\varphi(x,t)-\varphi(y,t))^2
dxdy+\int_{\Omega}F(\varphi(t)).
\end{equation*}
If $d=2$, then any weak solution satisfies the energy identity
\begin{equation}
\frac{d}{dt}\mathcal{E}(u,\varphi) +2\|\sqrt{\nu(\varphi)}Du
\|^2+\|\nabla\mu\|^2=\langle h(t),u\rangle,  \label{eniden}
\end{equation}
In particular we have $u\in C([0,\infty);G_{div})$, $\varphi\in
C([0,\infty);H)$ and $\int_\Omega F(\varphi)\in C([0,\infty))$. Furthermore,
if $d=2$ and $h\in L^2_{tb}(0,\infty;V_{div}^{\prime})$, %and
%$$
%\|h\|_{L^2_{tb}(0,\infty;V_{div}')}:=\Big(\sup_{t\geq 0}
%\int_t^{t+1}\|h(\tau)\|_{V_{div}'}^2 d\tau\Big)^{1/2}
%<\infty,
%$$
then any weak solution satisfies also the dissipative estimate
\begin{equation}
\mathcal{E}(u(t),\varphi(t))\leq \mathcal{E}(u_0,\varphi_0)e^{-kt}+
F(m_0)|\Omega| + K, \qquad\forall t\geq 0,  \label{dissest}
\end{equation}
where $m_0=(\varphi_0,1)$ and $k$, $K$ are two positive constants which are
independent of the initial data, with $K$ depending on $\Omega$, $\nu$, $J$,
$F$ and $\|h\|_{L^2_{tb}(0,\infty;V_{div}^{\prime})}$.
\end{thm}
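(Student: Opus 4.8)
The plan is to build a weak solution by a Faedo--Galerkin scheme and then to recover the energy relations through a priori estimates, compactness and weak lower semicontinuity, following \cite{CFG}. First I would discretize \eqref{sy3}--\eqref{sy2} by projecting the Navier--Stokes equation onto the span of the first $n$ eigenfunctions of the Stokes operator and the Cahn--Hilliard equation onto a Galerkin basis for the Neumann Laplacian, solving the resulting ODE system locally in time. The fundamental a priori bound comes from testing the discrete momentum equation with $u^n$ and the discrete Cahn--Hilliard equation with $\mu^n$: the Korteweg term $(\mu\nabla\varphi,u)$ and the convective term $(u\cdot\nabla\varphi,\mu)$ cancel exactly, producing the differential form of \eqref{eniden} at the discrete level. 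Integrating in time and using (H2) (so that $2\|\sqrt{\nu(\varphi)}Du\|^2\ge 2\nu_1\|Du\|^2$) together with (H4) to control $\int_\Omega F(\varphi)$ from below by $\|\varphi\|_{L^{2+2q}}^{2+2q}$, I obtain the uniform bounds on $u$ in $L^\infty(0,T;G_{div})\cap L^2(0,T;V_{div})$, on $\varphi$ in $L^\infty(0,T;L^{2+2q}(\Omega))$, and on $\nabla\mu$ in $L^2(0,T;H)$.

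The key structural step, and the one I expect to be the main obstacle, is to upgrade the control of $\nabla\mu$ into the bound $\varphi\in L^2(0,T;V)$ claimed in \eqref{regpw1}; this is exactly the point where the low regularity of the nonlocal chemical potential is felt. Differentiating $\mu=a\varphi-J\ast\varphi+F'(\varphi)$ gives $\nabla\mu=(a+F''(\varphi))\nabla\varphi+(\nabla a)\varphi-(\nabla J)\ast\varphi$, and assumption (H3) yields $a+F''(\varphi)\ge c_0$, so that $c_0\|\nabla\varphi\|\le\|\nabla\mu\|+C\|\varphi\|$ after using $J\in W^{1,1}$ from (H1) to absorb the convolution terms. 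This closes the $H^1$ estimate on $\varphi$.

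Next I would derive bounds on the time derivatives. For $u_t$ one writes the momentum equation as $u_t=-\mathcal{B}(u,u)+\mathrm{div}(2\nu(\varphi)Du)+\mu\nabla\varphi+h$ and estimates each term in $V_{div}'$, using \eqref{standest2D} in 2D; the Korteweg force is handled by the above $H^1$ control of $\varphi$ together with the $\nabla\mu$ bound. This gives $u_t\in L^2(0,T;V_{div}')$ for $d=2$ and $u_t\in L^{4/3}(0,T;V_{div}')$ for $d=3$, and analogously $\varphi_t\in L^2(0,T;V')$ (resp.\ $L^{4/3}$), matching \eqref{regpw2}--\eqref{regpw4}. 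With these bounds, Aubin--Lions compactness provides strong convergence of $\varphi^n$ in $L^2(0,T;H)$ (and of $u^n$ in $L^2(0,T;G_{div})$), enough to pass to the limit in all nonlinear terms, including $F'(\varphi^n)$ via (H5) and a.e.\ convergence. Passing to the limit in the time-integrated discrete energy balance and invoking weak lower semicontinuity of the norms and convexity of the quadratic part of $\mathcal{E}$ yields the energy inequality \eqref{ei}.

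Finally, in dimension two I would promote \eqref{ei} to the energy identity \eqref{eniden}. The point is that the improved regularity \eqref{regpw3}--\eqref{regpw4} makes $u$ and $\mu$ admissible test functions in the limit equations, so that the cross terms again cancel and no information is lost in the duality pairings; the chain rule for $t\mapsto\int_\Omega F(\varphi)$ is justified by $\varphi\in L^2(0,T;V)$ and $\varphi_t\in L^2(0,T;V')$ together with (H5). The energy identity shows that $\mathcal{E}(u(t),\varphi(t))$ is continuous in $t$; combined with the weak continuity of $u$ in $G_{div}$ and $\varphi$ in $H$, a standard norm-convergence argument upgrades weak to strong continuity, giving $u\in C([0,\infty);G_{div})$ and $\varphi\in C([0,\infty);H)$. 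For the dissipative estimate \eqref{dissest} I would start from \eqref{eniden}, bound $\langle h,u\rangle\le\tfrac{\nu_1}{2}\|\nabla u\|^2+C\|h\|_{V_{div}'}^2$ by Young and Korn--Poincar\'e, and exploit (H3) to bound $\mathcal{E}$ from above and below up to the conserved-mean contribution $F(m_0)|\Omega|$; a uniform (integral) Gronwall lemma adapted to the translation-bounded forcing $h\in L^2_{tb}(0,\infty;V_{div}')$ then yields the exponential decay with the stated constants $k$, $K$.
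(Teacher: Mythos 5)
Your proposal is correct and follows essentially the same route as the paper, which itself gives no proof of this theorem but simply cites \cite[Theorem~1, Corollaries~1 and~2]{CFG}: the Faedo--Galerkin scheme, the cancellation of the Korteweg and convective terms when testing with $u^n$ and $\mu^n$, the recovery of $\varphi\in L^2(0,T;V)$ from $\|\nabla\mu\|$ via (H3), the time-derivative bounds and Aubin--Lions compactness, the upgrade to the energy identity in 2D through the admissibility of $u$ and $\mu$ as test functions, and the Gronwall argument for the dissipative estimate are exactly the ingredients of that reference. No gaps.
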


{\color{black} Henceforth we shall denote by $Q$ a continuous function
monotone increasing with respect to each of its arguments. As a consequence of
energy inequality \eqref{ei}
%Let us also observe that, under the assumptions of Theorem \ref{thm},
%by using \eqref{ei}
it is easy to deduce the following bound
\begin{align}
&\Vert u\Vert_{L^\infty(0,T;G_{div})\cap
L^2(0,T;V_{div})}+\Vert\varphi\Vert_{L^\infty(0,T;L^{2+2q}(\Omega))\cap
L^2(0,T;V)} +\Vert F(\varphi)\Vert_{L^\infty(0,T;L^1(\Omega))}  \notag \\
&\leq Q\big(\mathcal{E}(u_0,\varphi_0),\Vert
h\Vert_{L^2(0,T;V_{div}^{\prime})}\big),  \label{est36}
\end{align}
where $Q$ also depends on $F,J,\nu_1$ and $\Omega$.
In all the following sections we take $d=2$}.

\section{Uniqueness of weak solutions (constant viscosity)}

\label{Sec3}\setcounter{equation}{0}

{\color{black} Here} we prove that the weak solution of the nonlocal
Cahn-Hilliard-Navier-Stokes system {\color{black}with constant viscosity $\nu$}
is unique and {\color{black} we} provide a continuous dependence estimate. In Subsection \ref%
{regpot} we shall first address the case of constant mobility ($m=1$)
%constant viscosity ($\nu=1$)
and regular potential $F$. Nevertheless, we shall see in Subsection \ref%
{singpotcm} and Subsection \ref{singpotdm} that the arguments used for this
case can also be applied to the cases of singular potential and constant or
degenerate mobility (see \cite{FG2} or \cite{FGR} for the existence).
% (in both cases the viscosity $\nu=1$).
%We recall that existence of a weak solution was established in \cite{FG2} for the former case and in \cite{FGR}
%for the latter case.

\subsection{Regular potential and constant mobility}

\label{regpot}

The main result is the following.

\begin{thm}
\label{uniqthm} Let $d=2$ and suppose that assumptions (H1)--(H5) are
satisfied with {\color{black}$\nu$ constant}. {\color{black}Let $u_0\in G_{div}$%
, $\varphi_0\in H$ with $F(\varphi_0)\in L^1(\Omega)$ and $h\in
L^2_{loc}([0,\infty);V^\prime_{div})$.} Then, the weak solution $[u,\varphi]$
corresponding to $[u_0,\varphi_0]$ and given by Theorem \ref{thm} is unique.
Furthermore, {\color{black} let $z_i:=[u_i,\varphi_i]$ be two weak solutions
corresponding to two initial data $z_{0i}:=[u_{0i},\varphi_{0i}]$ {%
\color{black}and external forces $h_i$}, with $u_{0i}\in G_{div}$, $%
\varphi_{0i}\in H$ such that $F(\varphi_{0i})\in L^1(\Omega)$ and $h_i\in
L^2_{loc}([0,\infty);V^\prime_{div})$.}
%and $|%\overline{\varphi}_{0i}|\leq\eta$ for some positive constant $\eta$, $i=1,2$%
Then the following continuous dependence estimate holds
\begin{align}
&\Vert
u_2(t)-u_1(t)\Vert^2+\Vert\varphi_2(t)-\varphi_1(t)\Vert_{V^{\prime}}^2
\notag \\
&+\int_0^t\Big(\frac{c_0}{2}\Vert\varphi_2(\tau)-\varphi_1(\tau)\Vert^2 +%
\frac{\nu}{4} \Vert\nabla\big(u_2(\tau)-u_1(\tau)\big)\Vert^2\Big)d\tau
\notag \\
&\leq\big(\Vert
u_2(0)-u_1(0)\Vert^2+\Vert\varphi_2(0)-\varphi_1(0)\Vert_{V^{\prime}}^2\big) %
\Lambda_0(t)  \notag \\
&+{\color{black}\big|\overline{\varphi_2(0)}-\overline{\varphi_1(0)}\big|
Q\big(\mathcal{E}(z_{01}),\mathcal{E}(z_{02}),\Vert
h_1\Vert_{L^2(0,t;V_{div}^{\prime})},\Vert
h_2\Vert_{L^2(0,t;V_{div}^{\prime})}\big)} \Lambda_1(t)  \notag \\
&+\Vert h_2-h_1\Vert_{L^2(0,T;V_{div}^{\prime})}^2\Lambda_2(t),
\label{estcontdip}
\end{align}
for all $t\in [0,T]$, where $\Lambda_0$, $\Lambda_1$ and $\Lambda_2$ are
continuous functions which depend on the norms of the two solutions. {%
\color{black} The functions $Q$ and $\Lambda_i$ also depend on $%
F,J,\nu$ and $\Omega$. } %$%
%C_\eta$ is a positive constant which depends on $\eta$ and on the energies $%
%\mathcal{E}(z_{02}),\mathcal{E}(z_{01})$.
\end{thm}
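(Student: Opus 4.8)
The plan is to derive the continuous dependence estimate \eqref{estcontdip} by writing equations for the differences $u:=u_2-u_1$ and $\varphi:=\varphi_2-\varphi_1$ and constructing a suitable Lyapunov-type functional. Subtracting the Navier-Stokes equations \eqref{sy3}--\eqref{sy4} for the two solutions gives
\begin{equation*}
u_t-\nu\,\mathcal{B}(u,u_2)-\nu\,\mathcal{B}(u_1,u)+\nabla\pi=\mu_2\nabla\varphi_2-\mu_1\nabla\varphi_1+(h_2-h_1),
\end{equation*}
while subtracting the Cahn-Hilliard equations \eqref{sy1}--\eqref{sy2} (with $m=1$) yields $\varphi_t+u\cdot\nabla\varphi_2+u_1\cdot\nabla\varphi=\Delta\mu$ with $\mu=a\varphi-J\ast\varphi+F'(\varphi_2)-F'(\varphi_1)$. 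The key structural difficulty is that $\varphi$ only has the low regularity from \eqref{regpw1}, so I do not test the $\varphi$-equation in $H$ but rather in the weaker space $V'$. The natural energy functional is therefore $\Phi(t)=\tfrac12\|u(t)\|^2+\tfrac12\|\varphi(t)\|_{V'}^2$, using the inverse Neumann operator $B_N^{-1}$ introduced above to define the $V'$-norm on the zero-mean part, with a separate and elementary treatment of the mean $\overline{\varphi}(t)$, which evolves only through the forcing and accounts for the second term on the right-hand side of \eqref{estcontdip}.

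First I would test the velocity-difference equation with $u$ in $G_{div}$, so the pressure and the Korteweg terms are handled via $b(u,u_2,u)$ (using antisymmetry $b(u,w,v)=-b(u,v,w)$ to kill $b(u_1,u,u)$) and via the Korteweg difference $(\mu_2\nabla\varphi_2-\mu_1\nabla\varphi_1,u)$. Second, I would test the zero-mean part of the $\varphi$-equation with $B_N^{-1}(\varphi-\overline{\varphi})$, which converts $\langle\varphi_t,B_N^{-1}\varphi\rangle$ into $\tfrac12\frac{d}{dt}\|\varphi\|_{V'}^2$ and turns $\langle\Delta\mu,B_N^{-1}\varphi\rangle$ into $-(\mu,\varphi)$ up to mean-value corrections; the strong convexity assumption (H3), namely $F''+a\ge c_0$, produces the coercive term $(\mu,\varphi)\ge c_0\|\varphi\|^2-\|J\ast\varphi\|\,\|\varphi\|$ type control, yielding the $\tfrac{c_0}{2}\|\varphi\|^2$ dissipation on the left of \eqref{estcontdip}. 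Adding the two identities, the top-order viscous term $\nu\|\nabla u\|^2$ appears and I would keep a fraction $\tfrac{\nu}{4}\|\nabla u\|^2$ on the left while absorbing the convective and coupling cross-terms into it via Young's inequality and the 2D estimate $|b(u,v,w)|\le c\|u\|^{1/2}\|\nabla u\|^{1/2}\|\nabla v\|\|w\|^{1/2}\|\nabla w\|^{1/2}$ together with $\|\mathcal{B}(u,u)\|_{V_{div}'}\le c\|u\|\|\nabla u\|$.

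The main obstacle is bounding the Korteweg coupling term, which must be rewritten as
\begin{equation*}
(\mu_2\nabla\varphi_2-\mu_1\nabla\varphi_1,u)=(\mu_2\nabla\varphi+(\mu_2-\mu_1)\nabla\varphi_1,u),
\end{equation*}
since $\nabla\varphi$ and $\nabla\varphi_1$ are only in $L^2(0,T;H)$ and $u$ only in $L^2(0,T;V_{div})$; the dangerous factor is $\|\nabla\varphi\|$, which cannot be controlled pointwise in time. The remedy is to integrate these cross-terms so that $\|\nabla u\|$ or $\|\varphi\|$ (both available on the left) absorb them, while the remaining factors $\|\nabla\varphi_i\|^2$ and $\|\nabla\mu_i\|^2$ enter only as $L^1(0,T)$-in-time coefficients bounded through \eqref{est36} and \eqref{ei}; the difference $\mu_2-\mu_1=a\varphi-J\ast\varphi+F'(\varphi_2)-F'(\varphi_1)$ is estimated using the local Lipschitz bound on $F'$ from (H3) combined with the $L^\infty(0,T;L^{2+2q})$ control of $\varphi_i$. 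Once every term is either kept on the left or reduced to $\alpha(t)\Phi(t)$ with $\alpha\in L^1(0,T)$, a Gronwall argument produces the factors $\Lambda_0,\Lambda_1,\Lambda_2$ (which therefore depend continuously on the solution norms through \eqref{est36}) and closes \eqref{estcontdip}; uniqueness follows by taking $z_{01}=z_{02}$ and $h_1=h_2$.
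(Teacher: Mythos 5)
Your overall skeleton matches the paper's: the energy functional $\Vert u\Vert^2+\Vert\varphi\Vert_{V'}^2$, testing the Cahn--Hilliard difference with $B_N^{-1}(\varphi-\overline{\varphi})$, extracting the coercive term $c_0\Vert\varphi\Vert^2$ from (H3), and closing with Gronwall. But your treatment of what you yourself call ``the main obstacle'' --- the Korteweg coupling --- does not close. In your decomposition $(\mu_2\nabla\varphi+(\mu_2-\mu_1)\nabla\varphi_1,u)$ the first term contains $F'(\varphi_2)\nabla\varphi$ and $a\varphi_2\nabla\varphi$, where $\nabla\varphi$ is the gradient of the \emph{difference}: there is no $\Vert\nabla\varphi\Vert^2$ on the left-hand side of your differential inequality, so Young's inequality cannot absorb it, and integrating by parts (using $\mathrm{div}\,u=0$) trades it for $\varphi\,F''(\varphi_2)\nabla\varphi_2\cdot u$, which would require $\Vert\varphi\Vert_{L^4}$ (hence $\Vert\varphi\Vert_V$) together with a bound on $F''(\varphi_2)$ that the weak regularity $\varphi_2\in L^\infty(0,T;L^{2+2q})\cap L^2(0,T;V)$ does not provide. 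Likewise, your plan to estimate $\mu_2-\mu_1$ via ``the local Lipschitz bound on $F'$'' fails because $F'$ is only \emph{locally} Lipschitz and $\varphi_i$ is not essentially bounded, and even a pointwise bound $|F'(\varphi_2)-F'(\varphi_1)|\le C|\varphi|$ would still leave you with the uncontrollable factor $\Vert\nabla\varphi_1\Vert$ multiplying $\Vert\varphi\Vert_{L^4}\Vert u\Vert_{L^4}$ pointwise in time.

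The missing idea is the paper's preliminary rewriting of the Korteweg force as $\mu\nabla\varphi=\nabla\big(F(\varphi)+a\varphi^2/2\big)-\nabla a\,\varphi^2/2-(J\ast\varphi)\nabla\varphi$, absorbing the full gradient part (including everything involving $F'$) into a modified pressure, which is annihilated upon testing with $u\in G_{div}$. The difference of what remains is $-\tfrac12\varphi(\varphi_1+\varphi_2)\nabla a-(J\ast\varphi)\nabla\varphi_2-(J\ast\varphi_1)\nabla\varphi$: every occurrence of $\nabla\varphi$ or $\nabla\varphi_2$ is now paired with a convolution with $J$, so integration by parts lands the derivative on $J$, and each term is bounded by $\Vert\varphi\Vert\,\Vert\varphi_i\Vert_{L^4}\,\Vert\nabla J\Vert_{L^1}\,\Vert u\Vert_{L^4}$ --- exactly the quantities that the dissipation $\tfrac{c_0}{2}\Vert\varphi\Vert^2+\tfrac{\nu}{4}\Vert\nabla u\Vert^2$ and the $L^1(0,T)$ coefficient $\Vert\varphi_i\Vert_{L^4}^4$ can handle. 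Without this step the argument does not go through at weak-solution regularity. A secondary inaccuracy: $\overline{\varphi}$ is conserved in time, not ``evolving through the forcing''; the quantity that actually needs control is the mean of $\widetilde{\mu}$ arising from the test function $B_N^{-1}(\varphi-\overline{\varphi})$, and that is precisely where (H5) and the energy bound \eqref{est36} enter to produce the term $\big|\overline{\varphi_2(0)}-\overline{\varphi_1(0)}\big|\,Q(\cdots)\Lambda_1(t)$ in \eqref{estcontdip}.
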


\begin{proof}
Let us start by rewriting the Korteweg force by making explicit the
dependence on $\varphi$. Indeed, we have
\begin{align*}
& \mu\nabla\varphi=\big(a\varphi -J\ast\varphi+F^{\prime }(\varphi)\big)%
\nabla\varphi=\nabla\Big(F(\varphi)+a\frac{\varphi ^{2}}{2}\Big) -\nabla a%
\frac{\varphi ^{2}}{2}-(J\ast\varphi)\nabla\varphi.
\end{align*}
Hence we can write the Navier-Stokes equation with an extra-pressure $%
\widetilde{\pi}:=\pi-F(\varphi)+a\frac{\varphi^2}{2}$ as follows
\begin{align*}
& u_t-\nu\Delta u+(u\cdot\nabla)u+\nabla\widetilde{\pi} -h =-\nabla a\frac{%
\varphi^2}{2} -(J\ast\varphi)\nabla\varphi=:K(\varphi).
\end{align*}
Let us now consider two weak solutions $[u_i,\varphi_i]$ corresponding to
two initial data $[u_{0i},\varphi_{0i}]$ {\color{black}and two external forces
$h_i$}, with $u_{0i}\in G_{div}$, $\varphi_{i0}\in H$, $%
F(\varphi_{0i})\in L^1(\Omega)$ {\color{black} and $h_i\in
L^2_{loc}([0,\infty);V^\prime_{div})$}, $i=1,2$.  Set $u:=u_2-u_1$ and $%
\varphi:=\varphi_2-\varphi_1$. Then, the difference $[u,\varphi]$ satisfies
the system
\begin{align}
&\varphi_t=\Delta\widetilde{\mu}-u\cdot\nabla\varphi_1{\color{black}-u_2\cdot
\nabla\varphi},  \label{CHdiff} \\
&\widetilde{\mu}=a\varphi-J\ast\varphi+F^{\prime}(\varphi_2)-F^{\prime}(%
\varphi_1),  \label{chpotdiff} \\
&u_t-\nu\Delta u+ (u_2\cdot\nabla )u_2-(u_1\cdot \nabla)u_1+\nabla\widetilde{%
\pi}  \notag \\
&=-\varphi(\varphi_1+\varphi_2)\frac{\nabla a}{2}-(J\ast\varphi)\nabla
\varphi_2-(J\ast\varphi_1)\nabla\varphi +h ,  \label{NSdiff}
\end{align}
where $\widetilde{\pi}:=\widetilde{\pi}_2-\widetilde{\pi}_1$ and $h:=h_2-h_1$%
. %where the right-hand side is
%\begin{equation*}
%RHS=-\varphi(\varphi_1+\varphi_2)\nabla a-(J\ast\varphi)\nabla \varphi_2-(J\ast\varphi_1)\nabla\varphi.
%\end{equation*}
We multiply \eqref{NSdiff} by $u$ in $G_{div}$. After standard calculations,
the following terms (cf. \eqref{NSdiff})
\begin{align*}
I_{1}=-\frac{1}{2}\left(\varphi \left( \varphi _{1}+\varphi
_{2}\right)\nabla a,u\right), \text{ }I_{2}=-\left( \left( J\ast \varphi
\right) \nabla \varphi _{2},u\right) ,\text{ }I_{3}=-\left( \left( J\ast
\varphi _{1}\right) \nabla \varphi ,u\right),
\end{align*}
can be estimated in this way
\begin{align}
&I_1\leq\big|\big(\varphi(\varphi_1+\varphi_2)\nabla a,u\big)\big| %
\leq\Vert\varphi\Vert\Vert\varphi_1+\varphi_2\Vert_{L^4}\Vert\nabla
a\Vert_{L^\infty}\Vert u\Vert_{L^4}  \notag \\
&\leq c\Vert\varphi\Vert\Vert\varphi_1+\varphi_2\Vert_{L^4}\Vert\nabla
a\Vert_{L^\infty}\Vert u\Vert^{1/2} \Vert\nabla u\Vert^{1/2}  \notag \\
&\leq \frac{c_0}{10}\Vert\varphi\Vert^2+c\Vert\varphi_1+\varphi_2%
\Vert_{L^4}^2 \Vert\nabla a\Vert_{L^\infty}^2\Vert u\Vert\Vert\nabla u\Vert
\notag \\
&\leq\frac{c_0}{10}\Vert\varphi\Vert^2+\frac{\nu}{6}\Vert\nabla u\Vert^2
+c\Vert\varphi_1+\varphi_2\Vert_{L^4}^4 \Vert\nabla a\Vert_{L^\infty}^4\Vert
u\Vert^2,  \label{est1} \\
& I_2\leq\big|\big({\color{black}\varphi_2},(\nabla J\ast\varphi)u\big)\big| %
\leq\Vert{\color{black}\varphi_2}\Vert_{L^4}\Vert\nabla J\ast\varphi\Vert\Vert
u\Vert_{L^4}  \notag \\
&\leq c\Vert{\color{black}\varphi_2}\Vert_{L^4}\Vert\nabla
J\Vert_{L^1}\Vert\varphi\Vert\Vert u\Vert^{1/2} \Vert\nabla u\Vert^{1/2}
\notag \\
&\leq\frac{c_0}{10}\Vert\varphi\Vert^2+c\Vert\nabla J\Vert_{L^1}^2\Vert{%
\color{black}\varphi_2}\Vert_{L^4}^2 \Vert u\Vert\Vert\nabla u\Vert  \notag \\
&\leq\frac{c_0}{10}\Vert\varphi\Vert^2+\frac{\nu}{6}\Vert\nabla u\Vert^2
+c\Vert\nabla J\Vert_{L^1}^4\Vert{\color{black}\varphi_2}\Vert_{L^4}^4 \Vert
u\Vert^2,  \label{est2} \\
&I_3\leq\big|\big((\nabla J\ast{\color{black}\varphi_1})\varphi,u\big)\big| %
\leq\Vert\nabla J\ast{\color{black}\varphi_1}\Vert_{L^4}\Vert\varphi\Vert\Vert
u\Vert_{L^4}  \notag \\
&\leq c\Vert\nabla J\Vert_{L^1}\Vert{\color{black}\varphi_1}%
\Vert_{L^4}\Vert\varphi\Vert\Vert u\Vert^{1/2}\Vert\nabla u\Vert^{1/2}
\notag \\
&\leq\frac{c_0}{10}\Vert\varphi\Vert^2+ c\Vert\nabla J\Vert_{L^1}^2\Vert{%
\color{black}\varphi_1}\Vert_{L^4}^2\Vert u\Vert\Vert\nabla u\Vert  \notag \\
&\leq\frac{c_0}{10}\Vert\varphi\Vert^2+\frac{\nu}{6}\Vert\nabla u\Vert^2
+c\Vert\nabla J\Vert_{L^1}^4\Vert{\color{black}\varphi_1}\Vert_{L^4}^4\Vert
u\Vert^2.  \label{est3}
\end{align}
Taking estimates \eqref{est1}--\eqref{est3} into account, it is easy see that from \eqref{NSdiff}
we are led to the following differential inequality
\begin{align}
\frac{1}{2}\frac{d}{dt}\Vert u\Vert^2+ \frac{\nu}{4} \Vert\nabla u\Vert^2
\leq\frac{3}{10}c_0\Vert\varphi\Vert^2+\alpha\Vert u\Vert^2 +\frac{1}{\nu}%
\Vert h\Vert_{V_{div}^{\prime}}^2,  \label{est4}
\end{align}
where the function $\alpha$ is given by
\begin{align}
&\alpha:=c\Vert\nabla J\Vert_{L^1}^4\big(\Vert\varphi_1\Vert_{L^4}^4+\Vert%
\varphi_2\Vert_{L^4}^4\big) +c\Vert\nabla u_2\Vert^2.  \label{defalpha}
\end{align}
Since $\varphi_1,\varphi_2\in L^\infty(0,T;H)\cap L^2(0,T,V)$ and $%
L^\infty(0,T;H)\cap L^2(0,T,V)\hookrightarrow L^4(0,T;L^4(\Omega))$,
thanks to the Gagliardo-Nirenberg inequality, we have $\alpha\in
L^1(0,T)$.

Let us now multiply \eqref{CHdiff} by $B_N^{-1}(\varphi-\overline{\varphi})$
(notice that we have $\overline{\varphi}=\overline{\varphi}_{01}-\overline{%
\varphi}_{02}$). We get
\begin{align}
& \frac{1}{2}\frac{d}{dt}\Vert B_{N}^{-1/2}(\varphi-\overline{\varphi}%
)\Vert^2+(a\varphi+F^{\prime}(\varphi_1)-F^{\prime}(\varphi_2),\varphi)=(J%
\ast\varphi,\varphi) +|\Omega|\overline{\varphi}\overline{\widetilde{\mu}}
+I_{4}+I_{5},  \label{est13}
\end{align}
where {\color{black}
\begin{equation*}
I_{4}=-\left( u\cdot \nabla \varphi _1,B_{N}^{-1}(\varphi-\overline{\varphi}%
) \right) ,\quad I_{5}=-\left( u_2\cdot \nabla\varphi ,B_{N}^{-1}(\varphi-%
\overline{\varphi})\right).
\end{equation*}%
} By using assumption (H3), we find
\begin{equation}
\frac{1}{2}\frac{d}{dt}\Vert B_{N}^{-1/2}(\varphi-\overline{\varphi}%
)\Vert^2+c_{0}\Vert\varphi\Vert^2\leq|(J\ast\varphi,\varphi)| +|\Omega|%
\overline{\varphi}\overline{\widetilde{\mu}} +I_{4}+I_{5}.  \label{est}
\end{equation}

The first term on the right-hand side of \eqref{est} can be controlled as
follows
\begin{align}
&\big|(J\ast\varphi,\varphi-\overline{\varphi})\big|+|(J\ast\varphi,%
\overline{\varphi})| {\color{black}=|\big(B_N^{1/2}(J\ast\varphi-\overline{%
J\ast\varphi}),B_N^{-1/2}(\varphi-\overline{\varphi})\big)\big|%
+|(J\ast\varphi,\overline{\varphi})|}  \notag \\
&\leq\frac{c_0}{10}\Vert\varphi\Vert^2+c\Vert B_N^{-1/2}(\varphi-\overline{%
\varphi})\Vert^2 +\frac{c_0}{4}\Vert\varphi\Vert^2+c\overline{\varphi}^2,
\label{est5}
\end{align}
{\color{black}where we have used the fact that $\Vert B_N^{1/2}
u\Vert^2=(B_N u,u)=\Vert\nabla u\Vert^2$, for all $u\in D(B_N)$ and hence $%
\Vert B_N^{1/2} u\Vert=\Vert\nabla u\Vert$, which also holds, by density,
for all $u\in D(B_N^{1/2})=V_0$. %while the
The terms $I_4$ and $I_5$ can be estimated in this way}
\begin{align}
&I_4\leq\big|\big(u\cdot\nabla B_N^{-1}(\varphi-\overline{\varphi}),{%
\color{black}\varphi_1}\big)\big|\leq \Vert u\Vert_{L^4}\Vert\nabla
B_N^{-1}(\varphi-\overline{\varphi})\Vert\Vert{\color{black}\varphi_1}%
\Vert_{L^4}  \notag \\
&\leq \frac{\nu}{8} \Vert\nabla u\Vert^2 +c\Vert{\color{black}\varphi_1}%
\Vert_{L^4}^2\Vert{\color{black}B_N^{-1/2}}(\varphi-\overline{\varphi})\Vert^2,
\label{est6}\\
&I_5\leq\big|(u_2\cdot\nabla B_N^{-1}(\varphi-\overline{\varphi}),\varphi)%
\big| \leq \Vert\varphi\Vert\Vert u_2\Vert_{L^4}\Vert\nabla B_N^{-1}(\varphi-%
\overline{\varphi})\Vert_{L^4}  \notag \\
&\leq \frac{c_0}{20}\Vert\varphi\Vert^2+c\Vert u_2\Vert_{L^4}^2\Vert\nabla
B_N^{-1}(\varphi-\overline{\varphi})\Vert_{L^4}^2  \notag \\
&\leq \frac{c_0}{20}\Vert\varphi\Vert^2 +c\Vert u_2\Vert_{L^4}^2\Vert\nabla
B_N^{-1}(\varphi-\overline{\varphi})\Vert \Vert\nabla B_N^{-1}(\varphi-%
\overline{\varphi})\Vert_{H^1}.  \label{est7}
\end{align}
Observe that the $H^2$-norm of $\phi$ on $D(B_N)$ is equivalent to the $L^2$%
-norm of $B_N\phi+\phi$ (recall that $\phi:=B_N^{-1}(\varphi-\overline{%
\varphi})\in D(B_N)$). Thus we have
\begin{align}
&\Vert\nabla B_N^{-1}(\varphi-\overline{\varphi})\Vert_{H^1}\leq\Vert
B_N^{-1}(\varphi-\overline{\varphi})\Vert_{H^2}\leq c\Vert
(B_N+I)B_N^{-1}(\varphi-\overline{\varphi})\Vert \leq c\Vert\varphi-%
\overline{\varphi}\Vert.  \notag
\end{align}
Therefore, from \eqref{est7} we get
\begin{align}
I_5\leq\frac{c_0}{10}\Vert\varphi\Vert^2+c\Vert u_2\Vert_{L^4}^4\Vert
B_N^{-1/2}(\varphi-\overline{\varphi})\Vert^2 +|\Omega|\overline{\varphi}^2.
\label{est8}
\end{align}

{\color{black} Recalling estimate \eqref{est4} and plugging estimates  \eqref{est5}--\eqref{est8}
into \eqref{est}}, we deduce the differential inequality
\begin{align}
&\frac{1}{2}\frac{d}{dt}\Big(\Vert u\Vert^2+\Vert B_N^{-1/2}(\varphi-%
\overline{\varphi})\Vert^2\Big)+ \frac{c_0}{4}\Vert\varphi\Vert^2+\frac{\nu}{%
8} \Vert\nabla u\Vert^2  \notag \\
&\leq\beta\Big(\Vert u\Vert^2+\Vert B_N^{-1/2}(\varphi-\overline{\varphi}%
)\Vert^2\Big) +c\overline{\varphi}^2+|\Omega|\overline{\varphi}\overline{%
\widetilde{\mu}} +\frac{1}{\nu}\Vert h\Vert_{V_{div}^{\prime}}^2,
\label{est9}
\end{align}
where $\beta$ is given by
\begin{align}
&\beta:=\alpha+c(1+\Vert{\color{black}\varphi_1}\Vert_{L^4}^2+\Vert{\color{black}
u_2}\Vert_{L^4}^4)\in L^1(0,T).  \notag
\end{align}

If we consider two weak solutions corresponding to the same initial data and
to the same external force, then we have $\overline{\varphi}=0$ and $h=0$.
Therefore, by using Gronwall's lemma, from \eqref{est9}  we get $u=0$ and $%
\varphi=0$ on $[0,T]$ and this proves uniqueness.

If the two weak solutions correspond to different initial data and to
different external forces, we have
\begin{align}
&|\Omega||\overline{\widetilde{\mu}}|\leq \int_\Omega\big(%
|F^{\prime}(\varphi_2)|+|F^{\prime}(\varphi_1)|\big) \leq{\color{black} c}%
\int_\Omega\big(|F(\varphi_2)|+|F(\varphi_1)|\big)+c  \notag \\
& {\color{black}\leq Q\big(\mathcal{E}(z_{01}),\mathcal{E}%
(z_{02}),\Vert h_1\Vert_{L^2(0,T;V_{div}^{\prime})},\Vert
h_2\Vert_{L^2(0,T;V_{div}^{\prime})}\big)}, \qquad\forall t\geq 0,
\label{est35}
\end{align}
where we have used (H5) (which implies that $|F^{\prime}(s)|\leq c F(s)+c$,
for all $s\in\mathbb{R}$) {\color{black} and \eqref{est36}.
%Here, $\eta$ is a constant
%such that $|\overline{\varphi}_{0i}|\leq \eta$, $i=1,2$.
Therefore \eqref{est9} can be rewritten as}
\begin{align}
&\frac{d}{dt}\Big(\Vert u\Vert^2+\Vert B_N^{-1/2}(\varphi-\overline{\varphi}%
)\Vert^2\Big)+ \frac{c_0}{2}\Vert\varphi\Vert^2+ \frac{\nu}{4}\Vert\nabla
u\Vert^2  \notag \\
& \leq\beta\Big(\Vert u\Vert^2+\Vert B_N^{-1/2}(\varphi-\overline{\varphi}%
)\Vert^2\Big) +|\overline{\varphi}| {\color{black}Q\big(\mathcal{E}%
(z_{01}),\mathcal{E}(z_{02}),\Vert
h_1\Vert_{L^2(0,T;V_{div}^{\prime})},\Vert
h_2\Vert_{L^2(0,T;V_{div}^{\prime})}\big)}  \notag \\
&+\frac{2}{\nu}\Vert h\Vert_{V_{div}^{\prime}}^2.  \label{est10}
\end{align}
By using Gronwall's lemma once more, we deduce from \eqref{est10} that
\begin{align}
& \Vert u(t)\Vert^2+\Vert B_N^{-1/2}(\varphi(t)-\overline{\varphi})\Vert^2
\leq\big(\Vert u(0)\Vert^2+\Vert B_N^{-1/2}(\varphi(0)-\overline{\varphi}%
)\Vert^2\big) \Gamma_0(t)  \notag \\
& +|\overline{\varphi}|{\color{black}Q\big(\mathcal{E}(z_{01}),%
\mathcal{E}(z_{02}),\Vert h_1\Vert_{L^2(0,T;V_{div}^{\prime})},\Vert
h_2\Vert_{L^2(0,T;V_{div}^{\prime})}\big)} \Gamma_1(t) +\frac{2}{\nu}%
\Gamma_0(t)\Vert h\Vert_{L^2(0,T;V_{div}^{\prime})}^2,  \label{est11}
\end{align}
where $\Gamma_0(t):=e^{\int_0^t\beta(s)ds}$ and $\Gamma_1(t):=\int_0^t
e^{\int_s^t\beta(\tau)d\tau} ds$. By integrating \eqref{est10} between $0$
and $t$ and using \eqref{est11}, we find
\begin{align}
&\Vert u(t)\Vert^2+\Vert B_N^{-1/2}(\varphi(t)-\overline{\varphi})\Vert^2
+\int_0^t\Big(\frac{c_0}{2}\Vert\varphi\Vert^2+ \frac{\nu}{4}\Vert\nabla
u\Vert^2\Big)d\tau  \notag \\
& \leq\big(\Vert u(0)\Vert^2+\Vert B_N^{-1/2}(\varphi(0)-\overline{\varphi}%
)\Vert^2\big) \Gamma_2(t)  \notag \\
&+|\overline{\varphi}|{\color{black}Q\big(\mathcal{E}(z_{01}),%
\mathcal{E}(z_{02}),\Vert h_1\Vert_{L^2(0,T;V_{div}^{\prime})},\Vert
h_2\Vert_{L^2(0,T;V_{div}^{\prime})}\big)} \Gamma_3(t)  \notag \\
&+\frac{2}{\nu}\Gamma_0(t)\Vert h\Vert_{L^2(0,T;V_{div}^{\prime})}^2,
\label{est12}
\end{align}
for all $t\in [0,T]$, where $\Gamma_2(t):=1+\int_0^t\beta(s)\Gamma_0(s)ds$ and $%
\Gamma_3(t):=\int_0^t\beta(s)\Gamma_1(s)ds+T$. Finally, by suitably defining the functions $\Lambda_0$,
$\Lambda_1$ in terms of $\Gamma_0$, $\Gamma_2$ and $\Gamma_ 3$,
we deduce \eqref{estcontdip} from \eqref{est12}.
\end{proof}

%{\color{black}
%\begin{oss}
%{\upshape
%Since here the potential is regular, the dependence
%on $\eta$ in all constants $C_\eta$ can actually be omitted.
%However, we left this dependence in view of the case of singular potential and constant
%mobility.}

%\end{oss}
%}

\subsection{Singular potential and constant mobility}

\label{singpotcm} The proof of existence of a weak solution with initial
data $u_0\in G_{div}$ and $\varphi_0\in L^\infty(\Omega)$ with $%
F(\varphi_0)\in L^1(\Omega)$ is given in \cite{FG2}, where also a
nonconstant viscosity is considered. We recall that in this case the
assumption $|\overline{\varphi}_0|<1$ is needed in order to control the
average of the chemical potential. For the assumptions on the singular
potential $F$ we refer the reader to \cite{FG2}. We recall, in particular,
the physically relevant case of the so-called logarithmic potential, that
is,
\begin{equation}
F(s)=-\frac{\theta_c}{2}s^2+\frac{\theta}{2}\big((1+s)\log(1+s)+(1-s)%
\log(1-s)\big),  \label{potlog}
\end{equation}
where $0<\theta<\theta_c$, $\theta$ being the absolute temperature and $%
\theta_c$ a given critical temperature below which the phase separation
takes place.
%The assumptions on the singular potential $F$ which ensure the existence of a weak solution.

It is easy to see that, assuming the viscosity $\nu$ constant and $d=2$, the
uniqueness argument can also be applied to the present case. Indeed,
estimates \eqref{est1}-\eqref{est4} obviously still hold. Moreover,
considering \eqref{est13} we immediately see that \eqref{est} still follows
from \eqref{est13}, since in the case of singular potential we have
\begin{equation*}
F^{\prime\prime}(s)+a(x)\geq c_0,\qquad\forall s\in(-1,1),\qquad c_0>0.
\end{equation*}
In particular, this assumption is ensured by \cite[(A6)]{FG2}. Therefore,
uniqueness {\color{black} follows from \eqref{est9} on account of the
fact that in this inequality we have $\overline{\varphi}=0$ (and $h=0$). }

{\color{black} Concerning the proof of the continuous dependence estimate \eqref{estcontdip},
we have to be a bit more careful since estimate \eqref{est35} cannot be
applied in the present situation.

On the other hand, recalling \cite[Proof of Theorem 1]{FG2}, we have
\begin{align*}
&\Vert F^{\prime}(\varphi_i)\Vert_{L^2(0,T;L^1(\Omega))} \leq Q\big(%
\overline{\varphi}_{0i},\mathcal{E}(z_{0i}),\Vert
h_i\Vert_{L^2(0,T;V_{div}^{\prime})}\big),\qquad i=1,2.
\end{align*}
%$\overline{\varphi}_{01},\overline{\varphi}_{02}\in(-1,1)$.
By applying these last estimates we see that the term $|\Omega|\overline{%
\varphi}\overline{\widetilde{\mu}}$ on the right-hand side of \eqref{est9}
can be written in the form $\overline{\varphi}\Gamma_4$ with a function $%
\Gamma_4$ such that % the $L^2$-norm of $\Gamma_4$
%is estimated by a constant of the form
\begin{align*}
&\Vert\Gamma_4\Vert_{L^2(0,T)}\leq Q\big(\eta,\mathcal{E}(z_{01}),%
\mathcal{E}(z_{02}),\Vert h_1\Vert_{L^2(0,T;V_{div}^{\prime})}, \Vert
h_2\Vert_{L^2(0,T;V_{div}^{\prime})}\big),
\end{align*}
where $\eta\in[0,1)$ is such that $|\overline{\varphi}_{0i}|\leq\eta$, $i=1,2
$. Starting now from \eqref{est9} and using Gronwall's lemma like in the proof
of Theorem \ref{uniqthm}, we find a continuous dependence estimate of
the same form as \eqref{estcontdip} where now the function $Q$
depends also on $\eta$. We can therefore state the following}  %is given by

\begin{thm}
\label{uniqthmsing} {\color{black} Let $d=2$ and suppose that assumptions
(A1)--(A8) of \cite{FG2} are satisfied with $\nu$ constant. Let $u_0\in
G_{div}$, $\varphi_0\in L^\infty(\Omega)$ with $F(\varphi_0)\in L^1(\Omega)$%
, $|\overline{\varphi}_0|<1$ and $h\in L^2_{loc}([0,\infty);V_{div}^{\prime})
$.}  Then, the weak solution $[u,\varphi]$, corresponding to $[u_0,\varphi_0]$
and given by \cite[Theorem 1]{FG2}, is unique. Furthermore, {\color{black}
Let $z_i:=[u_i,\varphi_i]$ be two weak solutions} corresponding to two
initial data $z_{0i}:=[u_{0i},\varphi_{0i}]$ {\color{black} and two external
forces $h_i$}, with $u_{0i}\in G_{div}$, $\varphi_{0i}\in L^\infty(\Omega)$
such that $F(\varphi_{0i})\in L^1(\Omega)$, $|\overline{\varphi}%
_{0i}|\leq\eta$ for some constant $\eta\in[0,1)$ {\color{black} and $h_i\in
L^2_{loc}([0,\infty);V_{div}^{\prime})$}, $i=1,2$. Then estimate %
\eqref{estcontdip} {\color{black} holds with $Q$ also depending on $\eta$}.
\end{thm}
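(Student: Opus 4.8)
The plan is to replay, line by line, the argument proving Theorem~\ref{uniqthm}, checking which estimates survive the passage to a singular potential and isolating the single place where a new ingredient is needed. I would take two weak solutions $z_i=[u_i,\varphi_i]$, set $u:=u_2-u_1$, $\varphi:=\varphi_2-\varphi_1$, and write the difference system \eqref{CHdiff}--\eqref{NSdiff}. The Navier--Stokes part is untouched: the estimates \eqref{est1}--\eqref{est4} use only the $L^4(\Omega)$-bounds on $\varphi_1,\varphi_2,u_2$, and these hold \emph{a fortiori} here because the singular potential confines $|\varphi_i|<1$ a.e., so in fact $\varphi_i\in L^\infty(0,T;L^\infty(\Omega))$. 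Hence \eqref{est4}, with the same $\alpha\in L^1(0,T)$, is available verbatim.

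Next I would test \eqref{CHdiff} with $B_N^{-1}(\varphi-\overline{\varphi})$ to reach \eqref{est13}. The only structural point is the coercivity used to pass from \eqref{est13} to \eqref{est}: by a mean-value argument the relevant quantity is bounded from below by $\inf_{\xi}\big(F''(\xi)+a\big)\,\|\varphi\|^2$, where $\xi$ ranges over values between $\varphi_1$ and $\varphi_2$, hence over $(-1,1)$. The bound $F''(s)+a(x)\geq c_0$ for $s\in(-1,1)$ is exactly \cite[(A6)]{FG2}, so \eqref{est} follows without change; estimates \eqref{est5}--\eqref{est8} and thus \eqref{est9}, with $\beta\in L^1(0,T)$, are reproduced word for word. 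Uniqueness is then immediate: when the data and forces coincide, $\overline{\varphi}=0$ and $h=0$, so \eqref{est9} is a homogeneous Gronwall inequality in $\|u\|^2+\|B_N^{-1/2}(\varphi-\overline{\varphi})\|^2$, forcing $u\equiv0$, $\varphi\equiv0$ on $[0,T]$.

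The delicate step, and the only one where the regular-potential proof breaks down, is the control of $|\Omega|\overline{\varphi}\,\overline{\widetilde{\mu}}$ in \eqref{est9} for the continuous-dependence statement. For regular $F$ this was handled by \eqref{est35}, which rests on the pointwise bound $|F'(s)|\leq cF(s)+c$ from (H5); for a singular $F$ such as the logarithmic potential \eqref{potlog} this fails, since $F'$ blows up at $\pm1$ while $F$ stays bounded. The remedy is to trade the $L^\infty$-in-time control of $\overline{\widetilde{\mu}}$ for an $L^2$-in-time one: from \cite[Proof of Theorem~1]{FG2} one has $\|F'(\varphi_i)\|_{L^2(0,T;L^1(\Omega))}\leq Q(\overline{\varphi}_{0i},\mathcal{E}(z_{0i}),\|h_i\|_{L^2(0,T;V_{div}^{\prime})})$, and it is precisely here that the strict separation $|\overline{\varphi}_{0i}|\leq\eta<1$ is used to keep the average of the chemical potential finite. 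Writing $|\Omega|\overline{\varphi}\,\overline{\widetilde{\mu}}=\overline{\varphi}\,\Gamma_4$, this yields $\|\Gamma_4\|_{L^2(0,T)}\leq Q(\eta,\dots)$. I would then close by Gronwall's lemma exactly as in Theorem~\ref{uniqthm}, the only change being that the inhomogeneous term $\overline{\varphi}\,\Gamma_4(t)$ now lies in $L^2(0,T)$ rather than $L^\infty(0,T)$, which is harmless; integrating in time and redefining the $\Lambda_i$ produces an estimate of the form \eqref{estcontdip} with $Q$ carrying the extra dependence on $\eta$. I expect this control of $\overline{\widetilde{\mu}}$ to be the genuine obstacle: once the $L^2(0,T;L^1(\Omega))$ bound on $F'(\varphi_i)$ is secured, everything else is a transcription of the regular case.
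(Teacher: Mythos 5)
Your proposal is correct and follows essentially the same route as the paper: the Navier--Stokes and coercivity estimates carry over verbatim thanks to the uniform bound $|\varphi_i|\leq 1$ and the singular analogue of (H3), and the continuous dependence is recovered by replacing \eqref{est35} with the $L^2(0,T;L^1(\Omega))$ bound on $F'(\varphi_i)$ from \cite[Proof of Theorem 1]{FG2}, writing $|\Omega|\overline{\varphi}\,\overline{\widetilde{\mu}}=\overline{\varphi}\,\Gamma_4$ with $\Vert\Gamma_4\Vert_{L^2(0,T)}\leq Q(\eta,\dots)$. You correctly identified the control of $\overline{\widetilde{\mu}}$ as the only genuine obstacle and resolved it exactly as the paper does.
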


\subsection{Singular potential and degenerate mobility}

\label{singpotdm} This physically relevant case was addressed in \cite{FGR} {%
\color{black} %to which we refer for
from which we recall all the assumptions on the degenerate mobility $m$ and
on the singular potential $F$ as well as the weak formulation.
We assume that the mobility $m$ is degenerate at $\pm1$ and that the
double well potential $F$ is singular (e.g. logarithmic like) and defined in
$(-1,1)$. More precisely, we assume that $m\in C^1([-1,1])$, $m\geq 0$, that
$m(s)=0$ if and only if $s=-1$ or $s=1$, and that there exists $\epsilon_0>0$
such that $m$ is non-increasing in $[1-\epsilon_0,1]$ and non-decreasing in $%
[-1,-1+\epsilon_0]$. Furthermore, we suppose that $m$ and $F$ fulfill the
condition

\begin{description}
\item[(A1)] $F\in C^2(-1,1)$ and $mF^{\prime\prime}\in C([-1,1])$.
%There exists $\alpha>0$ such that
%\begin{align}
%& m(s)F''(s)\geq \alpha,\qquad\forall s\in[-1,1].
%\end{align}
\end{description}

We point out that (A1) is a typical condition which arises in the
Cahn-Hilliard equation with degenerate mobility (see \cite{EG,GL1,GL2,GZ}).

As far as $F$ is concerned
%Furthermore, we assume that the singular potential $F$ can be
we assume that it can be written in the following form
\begin{equation*}
F=F_1+F_2,
\end{equation*}
%where $F_2\in C^2([-1,1])$ and $F_1$ is such that there exists a positive integer $q$
%such that $F_1\in C^{(2+2q)}(-1,1)$ and the following assumptions are satisfied
%(see \cite{FG2})
where the singular component $F_1$ and the regular component $F_2\in
C^2([-1,1])$ satisfy the following assumptions.

\begin{description}
\item[(A2)] There exist $\kappa>4(a^\ast-a_\ast-b_\ast)$, where $%
b_\ast:=\min_{[-1,1]}F_2^{\prime\prime}$,
%There exists $a_2>4(a^\ast-a_\ast)$
and $\epsilon_0>0$ such that
\begin{align*}
&F_1^{\prime\prime}(s)\geq \kappa,\qquad\forall
s\in(-1,-1+\epsilon_0]\cup[1-\epsilon_0,1).
\end{align*}

%\item[(A3)] There exists $\e_0>0$ such that, for each
%    $k=0,1,\cdots, 2+2q$ and each $j=0,1,\cdots, q$,
%\begin{align*}
%&F_1^{(k)}(s)\geq 0,\qquad\forall s\in[1-\e_0,1),\\
%\mbox{near }s=1,\\
%&F_1^{(2j+2)}(s)\geq 0,\qquad F_1^{(2j+1)}(s)\leq 0,\qquad\forall s\in(-1,-1+\e_0].
%\mbox{near }s=-1.
%\end{align*}

\item[(A3)] There exists $\epsilon_0>0$ such that $F_1^{\prime\prime}$ is
non-decreasing in $[1-\epsilon_0,1)$ and non-increasing in $%
(-1,-1+\epsilon_0]$.

\item[(A4)] There exists $c_0>0$ such that
\begin{align*}
&F^{\prime\prime}(s)+a(x)\geq c_0,\qquad\forall s\in(-1,1),\qquad\mbox{a.e. }%
x\in\Omega.
\end{align*}
\end{description}

The constants $a^\ast$ and $a_\ast$ in (A2) are given by
\begin{align*}
&a^\ast:=\sup_{x\in\Omega}\int_\Omega|J(x-y)|dy<\infty, \qquad
a_\ast:=\inf_{x\in\Omega}\int_\Omega J(x-y)dy.
\end{align*}
Moreover, we denote by $\epsilon_0$ a positive constant the value of which
may possibly vary from line to line.

% However,
It is worth recalling that a typical situation is $m(s)=k_1(1-s^2)$ and $F$
given by \eqref{potlog}. We also recall that in \cite{FGR} the viscosity $\nu$ was
assumed to be constant just to avoid technicalities, but the results therein
also hold for a nonconstant viscosity satisfying (H2).

As far as the weak formulation is concerned, we point out that,
if the mobility degenerates then the gradient of the chemical
potential $\mu$ is not controlled  in some $L^p$ space. For this reason, and
also in order to pass to the limit to prove existence of a weak solution, a
suitable reformulation of the definition of weak solution should be
introduced in such a way that $\mu$ does not appear explicitly (cf. \cite{EG}, see also  \cite{FGR}).

\begin{defn}
Let $u_0\in G_{div}$, $\varphi_0\in H$ with $F(\varphi_0)\in L^1(\Omega)$, $%
h\in L^2(0,T;V_{div}^{\prime})$  and $0<T<+\infty$ be given. A couple $%
[u,\varphi]$ is a weak solution to \eqref{sy1}-\eqref{sy6} on $[0,T]$
corresponding to $[u_0,\varphi_0]$ if

\begin{itemize}
\item $u$, $\varphi$ satisfy
\begin{align*}
&u\in L^{\infty}(0,T;G_{div})\cap L^2(0,T;V_{div}), \\
&u_t\in L^{4/3}(0,T;V_{div}^{\prime}),\qquad\mbox{if}\quad d=3, \\
&u_t\in L^2(0,T;V_{div}^{\prime}),\qquad\mbox{if}\quad d=2, \\
&\varphi\in L^{\infty}(0,T;H)\cap L^2(0,T;V), \\
&\varphi_t\in L^2(0,T;V^{\prime}),
\end{align*}
and
\begin{align*}
&\varphi\in L^{\infty}(Q_T),\qquad|\varphi(x,t)|\leq 1\quad\mbox{a.e. }%
(x,t)\in Q_T:=\Omega\times(0,T);
\end{align*}

\item for every $\psi\in V$, every $v\in V_{div}$ and for almost any $%
t\in(0,T)$ we have
\begin{align*}
&\langle\varphi_t,\psi\rangle+\int_\Omega
m(\varphi)F^{\prime\prime}(\varphi)\nabla\varphi\cdot\nabla\psi+ \int_\Omega
m(\varphi) a \nabla\varphi\cdot\nabla\psi  \notag \\
&+\int_\Omega m(\varphi)(\varphi\nabla a-\nabla
J\ast\varphi)\cdot\nabla\psi=(u\varphi,\nabla\psi), \\
&\langle u_t,v\rangle+\nu(\nabla u,\nabla v)+b(u,u,v)=\big(%
(a\varphi-J\ast\varphi)\nabla\varphi,v\big)+\langle h,v\rangle;
\end{align*}

\item the initial conditions $u(0)=u_0$, $\varphi(0)=\varphi_0$ hold.
\end{itemize}
\end{defn}

Recall also that from the regularity properties of the weak solution we have
$u\in C_w([0,T];G_{div})$ and $\varphi\in C_w([0,T];H)$. Therefore, the
initial conditions $u(0)=u_0$, $\varphi(0)=\varphi_0$ make sense.

In \cite[Theorem 2]{FGR} the existence of a weak solution was
established with initial data $u_0\in G_{div}$ and $\varphi_0\in
L^\infty(\Omega)$ with $F(\varphi_0)\in L^1(\Omega)$ and $M(\varphi_0)\in
L^1(\Omega)$, where $M\in C^2(-1,1)$ is defined by $m(s)M^{\prime\prime}(s)=1
$ for all $s\in(-1,1)$ and $M(0)=M^{\prime}(0)=0$.}
Furthermore, in \cite[Proposition 4]{FGR} uniqueness of the weak solution
was proven for the convective nonlocal Cahn-Hilliard equation with
degenerate mobility for a given velocity $u\in
L^2_{loc}([0,\infty);V_{div}\cap L^\infty(\Omega)^d)$ ($d=2,3$). {\color{black}
To this purpose, the following additional conditions were assumed.

\begin{description}
\item[(A5)] There exists $\rho\in[0,1)$ such that
\begin{align*}
&\rho F_1^{\prime\prime}(s)+F_2^{\prime\prime}(s)+a(x)\geq 0,\qquad\forall
s\in(-1,1),\quad\mbox{a.e. in }\Omega.
\end{align*}

\item[(A6)] There exists $\alpha_0>0$ such that
\begin{align*}
&m(s) F_1^{\prime\prime}(s)\geq\alpha_0,\qquad\forall s\in [-1,1].
\end{align*}
\end{description}
}

By combining the proof of \cite[Proposition 4]{FGR} with the arguments of
Theorem \ref{uniqthm} we can now prove uniqueness of weak solutions for
the nonlocal Cahn-Hilliard-Navier-Stokes system with singular potential and
degenerate mobility. Indeed we have

\begin{thm}
\label{uniqthmdeg} {\color{black}Let $d=2$ and suppose that assumptions
(A1)--(A6) %of \cite[Theorem 2 and Proposition 4]{FGR}
are satisfied with $\nu$ constant. Let $u_0\in G_{div}$, $\varphi_0\in
L^\infty(\Omega)$ with $F(\varphi_0)\in L^1(\Omega)$, $M(\varphi_0)\in
L^1(\Omega)$ and $h\in L^2_{loc}([0,\infty);V_{div}^{\prime})$.} Then, the
weak solution to system \eqref{sy1}-\eqref{sy6}
%(cf. \cite[Definition 2]{FGR})
is unique. {\color{black} Moreover,  let $z_i:=[u_i,\varphi_i]$ be two weak solutions
corresponding to two initial data $z_{0i}:=[u_{0i},\varphi_{0i}]$
and external forces $h_i$, with $u_{0i}\in G_{div}$, $%
\varphi_{0i}\in L^\infty(\Omega)$ such that $F(\varphi_{0i})\in
L^1(\Omega)$, $M(\varphi_{0i})\in L^1(\Omega)$ and $h_i\in
L^2_{loc}([0,\infty);V^\prime_{div})$.
%and $|%\overline{\varphi}_{0i}|\leq\eta$ for some positive constant $\eta$, $i=1,2$%
Then the following continuous dependence estimate holds
\begin{align}
&\Vert
u_2(t)-u_1(t)\Vert^2+\Vert\varphi_2(t)-\varphi_1(t)\Vert_{V^{\prime}}^2
\notag \\
&+\int_0^t\Big((1-\rho)\alpha_0\Vert\varphi_2(\tau)-\varphi_1(\tau)\Vert^2 +%
\frac{\nu}{2} \Vert\nabla\big(u_2(\tau)-u_1(\tau)\big)\Vert^2\Big)d\tau
\notag \\
&\leq\big(\Vert
u_2(0)-u_1(0)\Vert^2+\Vert\varphi_2(0)-\varphi_1(0)\Vert_{V^{\prime}}^2\big) %
\Lambda_0(t)+ \big|\overline{\varphi_2(0)}-\overline{\varphi_1(0)}\big|^2\Lambda_1(t)  \notag \\
&+\Vert h_2-h_1\Vert_{L^2(0,T;V_{div}^{\prime})}^2\Lambda_2(t),
\label{estcontdeg}
\end{align}
for all $t\in [0,T]$, where $\Lambda_0$, $\Lambda_1$ and $\Lambda_2$ are
continuous functions which depend on the norms of the two solutions. The
functions $Q$ and $\Lambda_i$ also depend on $F,J,\nu$ and $\Omega$%
. }
\end{thm}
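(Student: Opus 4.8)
The plan is to differentiate the two solutions and run the energy method of Theorem~\ref{uniqthm}, importing from \cite[Proposition~4]{FGR} the one device needed to make the degenerate diffusion coercive. Set $u:=u_2-u_1$, $\varphi:=\varphi_2-\varphi_1$, $h:=h_2-h_1$, and note that $\overline{\varphi}=\overline{\varphi_{02}}-\overline{\varphi_{01}}$ is constant in time. I would subtract the two (reformulated) weak formulations and test the momentum equation against $u$ in $G_{div}$ and the Cahn--Hilliard equation against $B_N^{-1}(\varphi-\overline{\varphi})$; this produces $\tfrac12\tfrac{d}{dt}\|u\|^2$ and $\tfrac12\tfrac{d}{dt}\|B_N^{-1/2}(\varphi-\overline{\varphi})\|^2$ on the left, matching the two norms on the left-hand side of \eqref{estcontdeg}.

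The momentum estimate is identical to Theorem~\ref{uniqthm}: the Korteweg forcing in the weak form is again $\big((a\varphi-J\ast\varphi)\nabla\varphi,\,\cdot\,\big)$, so after introducing the extra pressure and bounding the three terms exactly as in \eqref{est1}--\eqref{est3} one obtains \eqref{est4}, with $\alpha\in L^1(0,T)$; here it is in fact cheaper since $|\varphi_i|\le1$ a.e., so all $\|\varphi_i\|_{L^4}$ are bounded.

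The Cahn--Hilliard part is where the degeneracy forces a change. Since $\mu$ is absent from the weak formulation I cannot pair a chemical-potential difference with $\varphi$ as in \eqref{est13}. Instead I would write the diffusion difference, tested against $\nabla B_N^{-1}(\varphi-\overline{\varphi})$, as $\big(\varphi-\overline{\varphi},\,G\big)$ plus lower-order terms, where at a.e.\ $x$ one has $G(x)=\int_{\varphi_1(x)}^{\varphi_2(x)} m(s)\big(a(x)+F''(s)\big)\,ds$; this uses the chain-rule primitives of $mF''$ (continuous up to $\pm1$ by (A1)) and of $m$, the identity $\langle A_N g,B_N^{-1}(\varphi-\overline{\varphi})\rangle=(\varphi-\overline{\varphi},g)$, and one integration by parts in which $\Delta B_N^{-1}(\varphi-\overline{\varphi})=-(\varphi-\overline{\varphi})$ converts the explicit $x$-dependence of $a$ into a further $(\varphi-\overline{\varphi})$-factor (the leftover $\nabla a$-term being lower order). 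The key point is the pointwise bound $m(s)\big(a+F''(s)\big)=(1-\rho)\,mF_1''(s)+m\big(\rho F_1''(s)+F_2''(s)+a\big)\ge(1-\rho)\alpha_0$, which follows from (A6) and (A5) together with $m\ge0$; hence $(\varphi,G)\ge(1-\rho)\alpha_0\|\varphi\|^2$, the very coefficient in \eqref{estcontdeg}. The remaining terms are treated as in Theorem~\ref{uniqthm}: the contributions built from $\nabla J\ast\varphi$ and $\varphi\nabla a$ are estimated as in \eqref{est6}--\eqref{est8} (with the extra bounded factor $m$), and the convective right-hand side $(u_2\varphi+u\varphi_1,\nabla B_N^{-1}(\varphi-\overline{\varphi}))$ reproduces $I_4,I_5$. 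The only genuinely new lower-order term is the mean-value part $\overline{\varphi}\,(1,G)$; since the integrand of $G$ is bounded, $|(1,G)|\le c\|\varphi\|$, so $|\overline{\varphi}|\,|(1,G)|\le\tfrac{(1-\rho)\alpha_0}{2}\|\varphi\|^2+c\,\overline{\varphi}^2$, which is why \eqref{estcontdeg} carries the clean factor $|\overline{\varphi_2(0)}-\overline{\varphi_1(0)}|^2$ with no energy-dependent $Q$, in contrast to the first-power average term in \eqref{estcontdip}.

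Adding the two inequalities yields the analogue of \eqref{est9}--\eqref{est10}, with coercive left-hand side $(1-\rho)\alpha_0\|\varphi\|^2+\tfrac{\nu}{2}\|\nabla u\|^2$, a factor $\beta\in L^1(0,T)$ multiplying $\|u\|^2+\|B_N^{-1/2}(\varphi-\overline{\varphi})\|^2$, and forcing $c\,\overline{\varphi}^2+\tfrac{2}{\nu}\|h\|_{V_{div}'}^2$. Taking $z_{01}=z_{02}$ and $h_1=h_2$ gives $\overline{\varphi}=0$, $h=0$, so Gronwall's lemma forces $u\equiv0$, $\varphi\equiv0$, i.e.\ uniqueness; for the general case two applications of Gronwall (as in the passage from \eqref{est9} to \eqref{est12}) give \eqref{estcontdeg}, the $\Lambda_i$ being built from $\exp\int_0^t\beta$ and its time integrals and continuous because $\beta\in L^1(0,T)$. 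The main obstacle is precisely the coercivity of the degenerate diffusion term: showing that, although $m$ vanishes and $F''$ blows up at $\pm1$, the tested difference is bounded below by $(1-\rho)\alpha_0\|\varphi\|^2$. This rests entirely on (A1) (so that $mF''$ and its primitive are continuous up to $\pm1$), (A6) (the pointwise floor $mF_1''\ge\alpha_0$), and (A5) (sign control of the remainder through the free parameter $\rho$), together with the bookkeeping, inherited from \cite[Proposition~4]{FGR}, that disentangles the explicit $x$-dependence of $a$ inside the flux.
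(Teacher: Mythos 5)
Your proposal is correct and follows essentially the same route as the paper: the function $G$ you introduce is exactly the difference $\Lambda(\cdot,\varphi_2)-\Lambda(\cdot,\varphi_1)$ of the primitives $\widetilde{\Lambda}_1+\widetilde{\Lambda}_2+a\Gamma$ used in the paper's proof (following \cite[Proposition~4]{FGR}), the coercivity bound $\partial_s\Lambda=m(F''+a)\ge(1-\rho)\alpha_0$ via (A5)--(A6) is the paper's key step, and the treatment of the mean-value term, the lower-order $\nabla a$ and convolution terms, and the double Gronwall argument all match. No gaps.
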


\begin{proof}
Arguing as in the first part of the proof of Theorem \ref{uniqthm} we can
obtain \eqref{est4} that we now write in the following form
\begin{align}
\frac{1}{2}\frac{d}{dt}\Vert u\Vert^2+\frac{\nu}{2}\Vert\nabla u\Vert^2 \leq%
\frac{1}{4}(1-\rho)\alpha_0\Vert\varphi\Vert^2+\alpha\Vert u\Vert^2{%
\color{black}+\frac{1}{\nu}\Vert h\Vert_{V_{div}^{\prime}}^2,}  \label{nlocCH7}
\end{align}
%where $\rho\in[0,1)$ and $\alpha_0>0$ are some constants which appear in the
%assumptions on the singular potential (see \cite[Theorem 3]{FGR}).
{\color{black} where the function $\alpha$ is still given by \eqref{est4} and
we have set $\varphi:=\varphi_2-\varphi_1$, $u:=u_2-u_1$, $h:=h_2-h_1$.
%and we have assumed for simplicity that the two external forces $h_1$ and $h_2$ are the same.
}

Regarding the estimates for the difference of the nonlocal Cahn-Hilliard,
let us first recall the approach used in the proof of \cite[Proposition 4]{FGR}.

Following \cite{GL2}, we introduce
\begin{align}
&\widetilde{\Lambda}_1(s):=\int_0^s
m(\sigma)F_1^{\prime\prime}(\sigma)d\sigma,\qquad\widetilde{\Lambda}_2(s):=\int_0^s
m(\sigma)F_2^{\prime\prime}(\sigma)d\sigma,\qquad\Gamma(s):=\int_0^s
m(\sigma)d\sigma,  \notag
\end{align}
for all $s\in[-1,1]$, and see that the assumptions on $m$ and on $F$ imply
that $\widetilde{\Lambda}_1\in C^1([-1,1])$ and $0<\alpha_0\leq
\widetilde{\Lambda}_1^{\prime}(s)\leq\alpha_1$ for some positive constant $\alpha_1$.
The weak formulation of the convective nonlocal Cahn-Hilliard equation {%
\color{black} with degenerate mobility %(cf. \cite[Definition 2]{FGR})
can then be rewritten} as follows
\begin{align}
&\langle\varphi_t,\psi\rangle+\big(\nabla\Lambda(\cdot,\varphi),\nabla\psi%
\big) -\big(\Gamma(\varphi)\nabla a,\nabla\psi\big) +\big( %
m(\varphi)(\varphi\nabla a-\nabla J\ast\varphi),\nabla\psi\big) =\big(%
u\varphi,\nabla\psi\big),  \label{nlocCH2}
\end{align}
for all $\psi\in V$, where $\Lambda(x,s):=\widetilde{\Lambda}_1(s)+\widetilde{\Lambda}_2(s)+a(x)\Gamma(s)$ for
all $s\in [0,T]$ and almost any $x\in\Omega$.

{\color{black} Consider now two weak solutions $[u_1,\varphi_1]$, $%
[u_2,\varphi_2]$
%Let us assume for simplicity that {\color{black} also} the two initial data
% {\color{black} as well as the two external forces $h_1$ and $h_2$,}
%are the same (the case of
%different initial data {\color{black} and external forces} can be handled without difficulties and leads to a
%continuous dependence estimate).
and take the difference between the two identities \eqref{nlocCH2}
corresponding to each solution. Then, choose $\psi=B_N^{-1}(\varphi-%
\overline{\varphi})$ as test function in the resulting identity.
%(notice that $\overline{\varphi}=0$).
This yields
\begin{align}
&\frac{1}{2}\frac{d}{dt}\Vert B_N^{-1/2}(\varphi-\overline{\varphi})\Vert^2 +%
\big(\Lambda(\cdot,\varphi_2)-\Lambda(\cdot,\varphi_1),\varphi\big) -\big(%
(\Gamma(\varphi_2)-\Gamma(\varphi_1))\nabla a,\nabla B_N^{-1}(\varphi-%
\overline{\varphi})\big)  \notag \\
&+\big((m(\varphi_2)-m(\varphi_1))(\varphi_2\nabla a-\nabla
J\ast\varphi_2),\nabla B_N^{-1}(\varphi-\overline{\varphi})\big)  \notag \\
&+\big(m(\varphi_1)(\varphi\nabla a -\nabla J\ast\varphi),\nabla
B_N^{-1}(\varphi-\overline{\varphi})\big)  \notag \\
&=\big(\Lambda(\cdot,\varphi_2)-\Lambda(\cdot,\varphi_1),\overline{\varphi}%
\big)+\big(u\varphi_1,\nabla B_N^{-1}(\varphi-\overline{\varphi})\big) +\big(%
u_2\varphi,\nabla B_N^{-1}(\varphi-\overline{\varphi})\big).  \label{nlocCH3}
\end{align}
Observe first that, thanks to (A5) and(A6), we have
\begin{align*}
&\partial_s\Lambda(x,s)=m(s)(F^{\prime\prime}(s)+a(x))\geq
(1-\rho)\alpha_0,\qquad\forall s\in[-1,1],\quad\mbox{a.e. }x\in\Omega,
\end{align*}
and also
\begin{align*}
&|\Lambda(x,s_2)-\Lambda(x,s_1)|\leq k|s_2-s_1|,\qquad\forall s_1,s_2\in[-1,1%
],\quad\mbox{a.e. }x\in\Omega,
\end{align*}
where $k=\Vert mF^{\prime\prime}\Vert_{C([-1,1])}+\Vert m\Vert_{C([-1,1])}\Vert
a\Vert_{L^\infty(\Omega)}$. Hence we have
\begin{align*}
&\big(\Lambda(\cdot,\varphi_2)-\Lambda(\cdot,\varphi_1),\varphi\big)%
\geq(1-\rho)\alpha_0\Vert\varphi\Vert^2,
\end{align*}
and also
\begin{align*}
&\big(\Lambda(\cdot,\varphi_2)-\Lambda(\cdot,\varphi_1),\overline{\varphi}%
\big)\leq k|\Omega|^{1/2}\Vert\varphi\Vert\overline{\varphi} \leq \frac{1}{8}%
(1-\rho)\alpha_0\Vert\varphi\Vert^2+c \overline{\varphi}^2.
\end{align*}
Concerning the third, fourth and fifth term on the left-hand side of %
\eqref{nlocCH3}, it is easy to see that they can be estimated by
\begin{align*}
&\frac{1}{8}(1-\rho)\alpha_0\Vert\varphi\Vert^2+c\Vert B_N^{-1/2}(\varphi-%
\overline{\varphi})\Vert^2.
\end{align*}
Finally,
%All the terms in \eqref{nlocCH3} can be estimated as in the proof of \cite[%
%Proposition 4]{FGR}, with the exception of the two terms on the right-hand
%side. These terms have now to
the last two terms on the right-hand side of \eqref{nlocCH3} can be controlled
in this way
\begin{align*}
|\big(u\varphi_1,\nabla B_N^{-1}(\varphi-\overline{\varphi})\big)|&\leq\Vert
u\Vert_{L^4}\Vert\varphi_1\Vert_{L^4}\Vert\nabla B_N^{-1}(\varphi-\overline{%
\varphi})\Vert  \notag \\
&\leq\frac{\nu}{4}\Vert\nabla u\Vert^2+c\Vert\varphi_1\Vert_{L^4}^2\Vert
B_N^{-1/2}(\varphi-\overline{\varphi})\Vert^2, \\
|\big(u_2\varphi,\nabla B_N^{-1}(\varphi-\overline{\varphi})\big)|&\leq\Vert
u_2\Vert_{L^4}\Vert\varphi\Vert\Vert\nabla B_N^{-1}(\varphi-\overline{\varphi%
})\Vert_{L^4}  \notag \\
& \leq\frac{1}{8}(1-\rho)\alpha_0\Vert\varphi\Vert^2+c\Vert
u_2\Vert_{L^4}^2\Vert\nabla B_N^{-1}(\varphi-\overline{\varphi})\Vert_{L^4}^2
\notag \\
&\leq \frac{1}{8}(1-\rho)\alpha_0\Vert\varphi\Vert^2+c\Vert
u_2\Vert_{L^4}^2\Vert\nabla B_N^{-1}(\varphi-\overline{\varphi}%
)\Vert\Vert\nabla B_N^{-1}(\varphi-\overline{\varphi})\Vert_{H^1}  \notag \\
&\leq \frac{1}{8}(1-\rho)\alpha_0\Vert\varphi\Vert^2+c\Vert
u_2\Vert_{L^4}^2\Vert B_N^{-1/2}(\varphi-\overline{\varphi}%
)\Vert\Vert\varphi-\overline{\varphi}\Vert  \notag \\
& \leq\frac{1}{4}(1-\rho)\alpha_0\Vert\varphi\Vert^2+c\Vert
u_2\Vert_{L^4}^4\Vert B_N^{-1/2}(\varphi-\overline{\varphi})\Vert^2+c%
\overline{\varphi}^2.
\end{align*}
Therefore, using the above estimates,
% plugging \eqref{nlocCH4}, \eqref{nlocCH5} into \eqref{nlocCH3}
%and using the estimates for the other terms in \eqref{nlocCH3} written in
%the proof of \cite[Proposition 4]{FGR},
we deduce from \eqref{nlocCH3} the following differential inequality
\begin{align}
&\frac{1}{2}\frac{d}{dt}\Vert B_N^{-1/2}(\varphi-\overline{\varphi})\Vert^2+%
\frac{3}{4}(1-\rho)\alpha_0\Vert\varphi\Vert^2 \leq \frac{\nu}{4}\Vert\nabla
u\Vert^2+\zeta\Vert B_N^{-1/2}(\varphi-\overline{\varphi})\Vert^2+c\overline{%
\varphi}^2,  \label{nlocCH6}
\end{align}
where $\zeta\in L^1(0,T)$ is given by $\zeta:=c(1+\Vert%
\varphi_1\Vert_{L^4}^2+\Vert u_2\Vert_{L^4}^4)$.
%and $\alpha$ is the same as
%in \eqref{defalpha}.
Inequalities \eqref{nlocCH7} and \eqref{nlocCH6} finally give
\begin{align}
&\frac{d}{dt}\Big(\Vert u\Vert^2+\Vert B_N^{-1/2}(\varphi-\overline{\varphi}%
)\Vert^2\Big)+ (1-\rho)\alpha_0\Vert\varphi\Vert^2+\frac{\nu}{2}\Vert\nabla
u\Vert^2  \notag \\
&\leq\theta\Big(\Vert u\Vert^2+\Vert B_N^{-1/2}(\varphi-\overline{\varphi}%
)\Vert^2\Big)+c\overline{\varphi}^2+ +\frac{2}{\nu}\Vert
h\Vert_{V_{div}^{\prime}}^2,  \label{est37}
\end{align}
where $\theta=2(\alpha+\zeta)\in L^1(0,T)$. Inequality \eqref{est37} has the same
form as \eqref{est9} without the term containing $\widetilde{\mu}$.
Therefore, arguing as in the proof of Theorem \ref{uniqthm} and using the standard Gronwall's lemma,
we find \eqref{estcontdeg}.}
%Uniqueness of the weak solution
%hence follows from this last differential inequality by applying the
%standard Gronwall lemma.
\end{proof}

%{\color{black} }

%\begin{oss}
%Also in the present case a continuous dependence estimate like %
%\eqref{estcontdip} holds.
%\end{oss}

\section{Weak-strong uniqueness (nonconstant viscosity)}

\label{Sec4}\setcounter{equation}{0}

Here we consider system \eqref{sy1}-\eqref{sy5} in dimension two with
constant mobility, regular potential and nonconstant viscosity $
\nu=\nu(\varphi)$. In this case we are not able to prove the uniqueness of
weak solutions, due to the poor regularity of $\varphi$ which makes
difficult to estimate the difference of the dissipation term in the
Navier-Stokes equations. However, we can prove a weak-strong uniqueness
result. This means that, given a {\color{black}weak solution $[u_1,\varphi_1]$
and a strong solution $[u_2,\varphi_2]$} both corresponding to the same
initial datum {\color{black}$[u_{0},\varphi_{0}]\in G_{div}\times L^\infty(\Omega)$},
then these two solutions coincide.

Before proving this result, let us first show that a global strong solution
exists. Indeed, we observe that, while the existence of a weak solution with
nonconstant viscosity easily follows easily from the same result for the
constant viscosity case (see \cite{CFG}), this does not occur as far as
strong solutions are concerned. The difficulty essentially lies in the fact
that the classical results for the Navier-Stokes equations in two dimensions
with constant viscosity (see, e.g., \cite{T}) cannot be used as in \cite{FGK}
to exploit the improved regularity for the convective term in the nonlocal
Cahn-Hilliard equation.

{\color{black} The regularity result requires a slightly stronger assumption on the interaction kernel $J$. Thus,}
before stating the main results of this section we recall the definition of
admissible kernel (see \cite[Definition 1]{BRB}).

\begin{defn}
A kernel $J\in W^{1,1}_{loc}(\mathbb{R}^2)$ is admissible if the following
conditions are satisfied:

\begin{description}
\item[(A1)] $J\in C^3(\mathbb{R}^2 \setminus \{0\})$;

\item[(A2)] $J$ is radially symmetric, $J(x) = \tilde J(\vert x\vert)$ and $%
\tilde J$ is non-increasing;

\item[(A3)] $\tilde J^{\prime\prime}(r)$ and $\tilde J^\prime(r)/r$ are
monotone on $(0,r_0)$ for some $r_0>0$;

\item[(A4)] $\vert D^3 J(x) \vert \leq C_\sharp\vert x\vert^{-3}$ for some $C_\sharp>0$.
\end{description}
\end{defn}

We recall that the Newtonian and Bessel potentials are admissible.
Moreover, we report the following (cf. \cite[Lemma 2]{BRB}).

\begin{lem}
\label{admiss} Let $J$ be admissible and $v=\nabla J * \psi$. Then, for all $%
p\in (1,\infty)$, there exists $C_p>0$ such that
\begin{equation*}
\Vert\nabla v\Vert_p \leq C_{L^p} \Vert \psi \Vert_{L^p}.
\end{equation*}
\end{lem}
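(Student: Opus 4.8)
The plan is to recognize the operator $\psi \mapsto \nabla v = (D^2 J)\ast\psi$ as a vector of Calder\'on--Zygmund singular integral operators and to invoke the Calder\'on--Zygmund theorem. Writing componentwise, $\partial_k v_l = (\partial_k\partial_l J)\ast\psi$, so it suffices to prove that each convolution operator $T_{kl}\psi := (\partial_k\partial_l J)\ast\psi$ maps $L^p(\mathbb{R}^2)$ into itself boundedly for $1<p<\infty$. I would therefore check that $K_{kl}:=\partial_k\partial_l J$ is a Calder\'on--Zygmund kernel and, separately, that $T_{kl}$ is bounded on $L^2$.

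First I would establish the size and smoothness conditions on $K_{kl}$. Using the radial representation
\[
\partial_k\partial_l J(x)=\tilde J''(r)\,\frac{x_k x_l}{r^2}+\frac{\tilde J'(r)}{r}\Big(\delta_{kl}-\frac{x_k x_l}{r^2}\Big),\qquad r=|x|,
\]
the size bound $|K_{kl}(x)|\leq C|x|^{-2}$ reduces to $|\tilde J''(r)|\leq Cr^{-2}$ and $|\tilde J'(r)|\leq Cr^{-1}$; near the origin these follow from the monotonicity of $\tilde J''$ and of $\tilde J'/r$ postulated in (A3) together with the local integrability of $\nabla J$, and for large $r$ they follow from (A4) by integration. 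The H\"ormander (gradient) smoothness condition $|\nabla K_{kl}(x)|=|D^3J(x)|\leq C_\sharp |x|^{-3}$ is precisely assumption (A4), so this part is immediate.

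The main obstacle is the $L^2$ bound, for which the pointwise kernel estimates do not suffice and the cancellation encoded in the radial symmetry must be exploited. I would argue via the Fourier symbol, which is proportional to $\widehat{K_{kl}}(\xi)=-c\,\xi_k\xi_l\,\widehat J(\xi)=\dfrac{\xi_k\xi_l}{|\xi|^2}\big(-c\,|\xi|^2\widehat J(\xi)\big)$. The factor $\xi_k\xi_l/|\xi|^2$ is a homogeneous degree-zero Mikhlin multiplier (a composition of Riesz transforms) and hence bounded on every $L^p$; it then remains to control $|\xi|^2\widehat J(\xi)=\widehat{-\Delta J}(\xi)$. The decay and monotonicity hypotheses guarantee that $\Delta J=\tilde J''+\tilde J'/r$ acts as a bounded convolution (for the Newtonian potential $\Delta J$ is a Dirac mass, so $K_{kl}$ reduces to a double Riesz transform), which yields $\widehat{K_{kl}}\in L^\infty$ and thus the $L^2$ estimate; equivalently, one may verify the uniform mean-value bound $\sup_{0<a<b}\big|\int_{a<|x|<b}K_{kl}(x)\,dx\big|\leq C$ directly, the traceless part integrating to zero over spheres by radiality.

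With the kernel conditions and the $L^2$ bound in hand, the Calder\'on--Zygmund theorem gives the weak-$(1,1)$ estimate for $T_{kl}$; Marcinkiewicz interpolation with the $L^2$ bound yields boundedness on $L^p$ for $1<p\leq 2$, and a duality argument (using $K_{kl}(-x)=K_{kl}(x)$) yields $2\leq p<\infty$. Summing over the finitely many components $k,l$ then gives $\Vert\nabla v\Vert_{L^p}\leq C_{L^p}\Vert\psi\Vert_{L^p}$ for every $p\in(1,\infty)$, which is the claim.
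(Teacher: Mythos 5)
The paper offers no proof of this lemma: it is imported wholesale from \cite[Lemma 2]{BRB} (``we report the following''), and the proof in that reference is precisely the Calder\'on--Zygmund argument you describe, so your overall route is the intended one. There is, however, one step that does not close as written, namely the size bound near the origin. Monotonicity of $\tilde J''$ and $\tilde J'/r$ on $(0,r_0)$ together with $\nabla J\in L^1_{loc}$, i.e.\ $\int_0^{r_0}|\tilde J'(r)|\,r\,dr<\infty$, only gives $|\tilde J'(r)/r|=o(r^{-3})$ and $|\tilde J''(r)|=o(r^{-3})$ as $r\to0$ (test the integral over $(r/2,r)$ against the monotone integrand), which is strictly weaker than the required $O(r^{-2})$; a kernel with $\tilde J'(r)\sim -r^{-2}\log^{-2}(1/r)$ passes (A2)--(A3) and the integrability test yet violates the size condition. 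What saves the argument is (A4) used near the origin as well: since $|D^3J(x)|\le C_\sharp|x|^{-3}$ controls $\tilde J'''(r)$ and $\tfrac{d}{dr}\big(\tilde J'(r)/r\big)$, integrating from $r$ up to a fixed $r_0$ yields $|\tilde J''(r)|+|\tilde J'(r)|/r\le Cr^{-2}$ --- the same integration you reserve for large $r$. Two smaller points. First, the distributional derivative $\partial_k\partial_l J$ equals the principal value of your pointwise kernel plus a multiple of $\delta_{kl}\,\delta_0$ (which is why \cite{BRB} write $\nabla v=c_0u+Tu$); the extra term is trivially bounded on $L^p$, but the identification $\partial_k v_l=\mathrm{p.v.}\,K_{kl}\ast\psi$ should not be asserted without it. Second, of your two routes to the cancellation/$L^2$ input, keep the annulus computation: averaging over circles gives $\int_{|x|=r}\hat x_k\hat x_l\,d\sigma=\pi r\delta_{kl}$, so $\int_{a<|x|<b}K_{kl}\,dx=\pi\delta_{kl}\big(b\tilde J'(b)-a\tilde J'(a)\big)$, which is bounded by the size estimate. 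The Fourier-side claim that ``$\Delta J$ acts as a bounded convolution'' is not justified by the hypotheses, since the pointwise Laplacian $\tilde J''+\tilde J'/r$ is in general only $O(r^{-2})$ and hence not locally integrable in the plane.
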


{\color{black} We also recall the following proposition for an inhomogeneous Stokes system
in non-divergence form:%
\begin{equation}
\left\{
\begin{array}{ll}
-\varpi \left( x\right) \Delta u+\nabla \pi =f\left( x\right) , & \text{in }%
\Omega , \\
\text{div}\left( u\right) =0\text{,} & \text{in }\Omega , \\
u=0, & \text{on }\partial \Omega .%
\end{array}%
\right.  \label{Stokes}
\end{equation}

\begin{prop}
\label{Stok-non}\cite[Proposition 2.1]{ZY} Let $f\in {\color{black} L^{2}\left(
\Omega \right) ^{2}}$ and $\varpi \in C^{\delta }\left( \overline{%
\Omega }\right) ,$ for some $\delta \in \left( 0,1\right) $, such that $%
0<\lambda_0 \leq \varpi \left( x\right) \leq \lambda_1 <\infty $ for all
$x\in\overline{\Omega}$. Then any solution
${\color{black}\left[ u,\pi \right]} \in H^{2}\left( \Omega \right)^2 \times
H^{1}\left( \Omega \right) $ of (\ref{Stokes}) satisfies the estimate%
\begin{equation*}
\left\Vert u\right\Vert _{H^{2}\left( \Omega \right) }+\left\Vert \pi
\right\Vert _{H^{1}\left( \Omega \right) }\leq C\left( \left\Vert
f\right\Vert _{L^{2}}+\left\Vert \pi \right\Vert _{L^{2}}\right) ,
\end{equation*}%
for some constant $C=C(\lambda_0 ,\lambda_1 ,\Omega ,\left\Vert \varpi
\right\Vert _{C^{\delta }\left( \overline{\Omega }\right) })>0.$
\end{prop}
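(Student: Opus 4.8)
The plan is to establish the estimate by the classical method of freezing the leading coefficient, reducing the variable-coefficient problem \eqref{Stokes} to the constant-coefficient Stokes system, for which the $L^2$ regularity theory (Cattabriga / Agmon--Douglis--Nirenberg; see also \cite{T}) provides $\|v\|_{H^2}+\|q\|_{H^1}\le C\|g\|_{L^2}$ for solutions of $-\lambda\Delta v+\nabla q=g$, $\mathrm{div}\,v=0$, $v|_{\partial\Omega}=0$ normalized by $\overline{q}=0$, with $C$ uniform for the constant $\lambda$ ranging in $[\lambda_0,\lambda_1]$. The point of the argument is that only the uniform continuity of $\varpi$, not any differentiability, is exploited, which is essential since $\varpi\in C^\delta(\overline\Omega)$ only.

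First I would work locally. I cover $\overline\Omega$ by finitely many patches of small diameter $\rho$, fix a representative point $x_0$ in each, and rewrite the momentum equation on a patch as a constant-coefficient Stokes system with frozen viscosity $\varpi(x_0)\in[\lambda_0,\lambda_1]$ and modified datum $f+(\varpi(x_0)-\varpi)\Delta u$. Multiplying by a cut-off from a subordinate partition of unity (and correcting the solenoidal defect of the localized field by a Bogovskii map), I apply the constant-coefficient estimate to the localized pair; the commutators produce the lower-order terms $\|u\|_{H^1}$ and $\|\pi\|_{L^2}$, while the frozen-coefficient error is controlled by $\|(\varpi(x_0)-\varpi)\Delta u\|_{L^2}\le\omega(\rho)\,\|u\|_{H^2}$, where $\omega(\rho)\le[\varpi]_{C^\delta}\rho^\delta\to0$ bounds the oscillation of $\varpi$ over a patch. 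Boundary patches are handled after flattening $\partial\Omega$ and invoking the half-space version of the constant-coefficient estimate. Choosing $\rho$ so small that $C\,\omega(\rho)\le 1/2$ lets me absorb the error term into the left-hand side and, summing over the finite cover, obtain
\[
\|u\|_{H^2}+\|\pi\|_{H^1}\le C\big(\|f\|_{L^2}+\|u\|_{H^1}+\|\pi\|_{L^2}\big).
\]

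It then remains to remove the lower-order velocity term. An Ehrling-type interpolation $\|u\|_{H^1}\le\varepsilon\|u\|_{H^2}+C_\varepsilon\|u\|_{L^2}$ followed by absorption of $\varepsilon\|u\|_{H^2}$ replaces $\|u\|_{H^1}$ by $\|u\|_{L^2}$. To discard $\|u\|_{L^2}$ I would run a compactness-contradiction argument: were the asserted estimate false, there would be a sequence with $\|u_n\|_{H^2}+\|\pi_n\|_{H^1}=1$ but $\|f_n\|_{L^2}+\|\pi_n\|_{L^2}\to0$; the compact embeddings $H^2\hookrightarrow\hookrightarrow L^2$ and $H^1\hookrightarrow\hookrightarrow L^2$ yield a limit $[u_*,\pi_*]$ solving the homogeneous system with $\pi_*=0$, whence $\Delta u_*=0$ together with $u_*|_{\partial\Omega}=0$ forces $u_*=0$, contradicting the normalization through the displayed inequality. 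Observe that $\|\pi\|_{L^2}$ genuinely cannot be dropped from the right-hand side: the additive constants of the pressure constitute the kernel of the homogeneous problem and are seen neither by $\|\nabla\pi\|_{L^2}$ nor by $f$.

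The main obstacle is the absorption step in the local analysis. Because \eqref{Stokes} is in non-divergence form and $\varpi$ is merely Hölder continuous, one cannot integrate the leading term by parts, so the usual energy/coercivity estimate is simply unavailable and the entire argument must be conducted at the level of the strong equation through the freezing method; what saves the day is that the oscillation $\omega(\rho)$ tends to zero because $\varpi$ is uniformly continuous, independently of its lack of differentiability. The technically heaviest ingredient is the boundary localization, requiring the constant-coefficient Stokes estimate up to a flattened portion of $\partial\Omega$; the full details are carried out in \cite{ZY}.
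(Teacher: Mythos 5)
The paper offers no proof of this proposition: it is imported verbatim from \cite[Proposition 2.1]{ZY}, so there is nothing internal to compare your argument against. That said, your reconstruction follows the standard and, as far as I can see, correct route for such non-divergence-form estimates with merely H\"older-continuous leading coefficient: localization on patches of diameter $\rho$, freezing $\varpi$ at a point, the Cattabriga/Agmon--Douglis--Nirenberg $L^2$-theory for the constant-coefficient Stokes system (including the inhomogeneous divergence condition, needed because $\operatorname{div}(\eta u)=\nabla\eta\cdot u\neq 0$ -- your Bogovskii correction handles this), absorption of the oscillation error $[\varpi]_{C^\delta}\rho^{\delta}\Vert u\Vert_{H^2}$ for $\rho$ small, and Ehrling plus a compactness--contradiction step to reduce the lower-order terms to $\Vert\pi\Vert_{L^2}$ alone. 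Your remarks that no energy estimate is available (one cannot integrate the leading term by parts) and that $\Vert\pi\Vert_{L^2}$ cannot be dropped (pressure constants) are both apt. One point you should patch: as written, your contradiction argument is run for a \emph{fixed} $\varpi$, so the constant it produces a priori depends on $\varpi$ itself rather than only on $\lambda_0,\lambda_1,\Omega,\Vert\varpi\Vert_{C^\delta}$ as the statement (and its use in the paper, where $\varpi=\nu(\varphi(\cdot,t))$ varies with $t$) requires. The fix is routine: let the coefficients $\varpi_n$ vary within the class $\lambda_0\le\varpi_n\le\lambda_1$, $\Vert\varpi_n\Vert_{C^\delta}\le M$, extract a uniformly convergent subsequence by Arzel\`a--Ascoli, and pass to the limit $-\varpi_*\Delta u_*=0$ as before; with that amendment the argument is complete in outline, the remaining technical weight being, as you say, the boundary localization.
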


We first show a result which generalizes \cite[Lemma 2.11]{GG4} for the
nonlocal Cahn-Hilliard equation with convection in two space dimensions.

\begin{lem}
\label{holder} Let $d=2$ and assume (H1) and (H3). Let $u\in L^{\infty}(T^\prime,T;G_{div})\cap L^{2}(T^\prime,T;V_{div}),$
for some $%
T>T^\prime\geq 0$ and let $\varphi \in L^{\infty }(T^\prime,T;L^{\infty }\left( \Omega \right) )$ be a bounded generalized (weak)\
solution of%
\begin{equation}
\left\{
\begin{array}{ll}
\partial _{t}\varphi =\text{div}\left( c\left( x,\varphi ,\nabla \varphi
\right) \right) -\text{div}\left( u\varphi \right) , & \text{in }\Omega
\times (T^\prime,T), \\
c\left( x,\varphi ,\nabla \varphi \right) \cdot n=0, & \text{on }\Gamma
\times (T^\prime,T),%
\end{array}%
\right.  \label{CH-c}
\end{equation}%
where $c\left( x,\varphi ,\nabla \varphi \right) :=(a\left( x\right)
+F^{\prime\prime}\left( \varphi \right) )\nabla \varphi +\nabla
a\varphi -\nabla J\ast \varphi $. There exist constants $C>0,$ $\alpha \in
\left( 0,1\right) ,$ depending on the $L^{\infty }(T^\prime,T;L^{\infty }\left( \Omega \right) )$-norm of $\varphi $ and $%
L^{4}(T^\prime,T; L^{4}\left( \Omega \right)^{2})$-norm
of $u$, respectively, such that%
\begin{equation}
\left\vert \varphi \left( x,t\right) -\varphi \left( y,s\right) \right\vert
\leq C(\left\vert x-y\right\vert ^{\alpha }+\left\vert t-s\right\vert
^{\alpha /2}),  \label{holder-est}
\end{equation}%
for every $\left( x,t\right) ,\left( y,s\right) \in
Q_{T^\prime,T}:=[T^\prime,T]\times \overline{\Omega }$.
\end{lem}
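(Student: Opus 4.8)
The plan is to read \eqref{CH-c} as a \emph{linear}, uniformly parabolic equation in divergence form with merely bounded measurable principal coefficient, and then to apply the classical De Giorgi--Nash--Moser theory (the interior and boundary H\"older estimates of Ladyzhenskaya--Solonnikov--Ural'tseva, generalizing \cite[Lemma~2.11]{GG4}), the only delicate point being the critical convective term, which I would control using $\mbox{div}(u)=0$. First I would rewrite the equation as
\[
\partial_t\varphi-\mbox{div}\big(A(x,t)\nabla\varphi\big)=\mbox{div}\,\mathbf{g}(x,t),\qquad A:=a+F^{\prime\prime}(\varphi),\quad \mathbf{g}:=\varphi\nabla a-\nabla J\ast\varphi-u\varphi,
\]
treating $A$ as a frozen coefficient (the solution $\varphi$ being given). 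By (H3) one has $A\ge c_0>0$, while (H1) gives $\|a\|_{L^\infty}\le\|J\|_{L^1}$ and, since $\varphi\in L^\infty$ and $F\in C^{2,1}_{loc}$, also $A\le\|J\|_{L^1}+\sup_{|s|\le\|\varphi\|_{L^\infty}}|F^{\prime\prime}(s)|=:\Lambda$; hence $c_0\le A\le\Lambda$ uniformly, which is all the ellipticity the theory requires. For the source, Young's inequality for convolutions gives $\varphi\nabla a,\ \nabla J\ast\varphi\in L^\infty(Q_{T^\prime,T})$ with norms $\lesssim\|\nabla J\|_{L^1}\|\varphi\|_{L^\infty}$, so the only rough part of $\mathbf{g}$ is $u\varphi$, which by the two-dimensional Ladyzhenskaya inequality lies in $L^4(Q_{T^\prime,T})^2$ with norm governed by the $L^4(L^4)$-norm of $u$.

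The main obstacle is that $u\varphi\in L^4_{t,x}$ is exactly scale-critical for a divergence-form datum when $d=2$ (for $q=r=4$ one has $\tfrac{d}{q}+\tfrac{2}{r}=1$, the borderline of the subcritical range $\tfrac{d}{q}+\tfrac{2}{r}<1$), so the standard subcritical hypothesis of the regularity theory is not available. The remedy is the divergence-free structure of $u$. In the De Giorgi energy inequality one tests the equation with $(\varphi-k)^+\zeta^2$, where $\zeta$ is a space-time cutoff and $k$ a level, and the convective contribution is $\int(u\cdot\nabla\varphi)(\varphi-k)^+\zeta^2=\tfrac12\int u\cdot\nabla\big[((\varphi-k)^+)^2\big]\zeta^2$. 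Since $\mbox{div}(u)=0$ in $\Omega$ and $u=0$ on $\partial\Omega$, an integration by parts kills the genuinely critical part and leaves only the lower-order remainder $-\tfrac12\int((\varphi-k)^+)^2\,u\cdot\nabla(\zeta^2)$. By H\"older and Gagliardo--Nirenberg this is bounded by $\|u\|_{L^4}\,\|(\varphi-k)^+\|_{L^{8/3}(A_k)}^2\,\|\nabla(\zeta^2)\|_{L^\infty}$, with $A_k:=\{\varphi>k\}$; because $8/3<4$ and the parabolic energy norm embeds into $L^4_{t,x}$, a positive power of $|A_k|$ can be extracted, which furnishes exactly the smallness needed to absorb this term and close the iteration.

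Once the convective term is tamed in this way, the De Giorgi--Nash--Moser energy inequalities on super-level sets yield local H\"older continuity of $\varphi$ in the parabolic interior, with exponent $\alpha$ and constant $C$ depending only on $c_0,\Lambda,\|\varphi\|_{L^\infty},\|u\|_{L^4(L^4)},J$ and $\Omega$. To reach $\partial\Omega$ I would exploit that $c\cdot n=0$ is precisely the conormal (Neumann-type) condition for the divergence-form operator: since $\partial\Omega\in C^2$ one may flatten the boundary locally and reflect evenly, reducing the boundary estimate to the interior one and giving H\"older continuity up to $\partial\Omega$ (the cancellation above persists, since the boundary integral vanishes when $u\cdot n=0$). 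Continuity up to the final time is automatic, as parabolic cylinders look backward in time; regularity up to $t=T^\prime$ is obtained by noting that the lemma is applied with $\varphi$ and $u$ defined, with the same bounds, on a neighborhood of $[T^\prime,T]$ (equivalently, by relabeling $T^\prime$). A standard covering argument then patches the interior and boundary estimates into the global bound \eqref{holder-est} on $Q_{T^\prime,T}$. The essential step, and the only place where both $d=2$ and the constraint $\mbox{div}(u)=0$ are used, is this cancellation of the critical drift.
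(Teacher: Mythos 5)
Your proposal is correct and follows essentially the same route as the paper: a De Giorgi level-set energy estimate obtained by testing with $\eta^{2}\varphi_{k}^{+}$, with the scale-critical convective term tamed precisely by the cancellation coming from $\mbox{div}(u)=0$ together with $u\in L^{4}(T^{\prime},T;L^{4}(\Omega)^{2})$, and the conclusion drawn from the Ladyzhenskaya--Solonnikov--Ural'tseva parabolic H\"older theory (membership in the class $\mathcal{B}_{2}$), exactly as in the paper's appeal to \cite{LSU}, \cite{NU} and \cite{ZY}. The only cosmetic differences are that you close the iteration by extracting a power of the level-set measure via $L^{8/3}$ interpolation rather than by the Young-inequality splitting used in \eqref{hold3}, and that you handle the conormal boundary condition by flattening and reflection rather than by directly running the level-set argument in boundary cylinders.
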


\begin{proof}
The proof is inspired by \cite[Theorem 3.7]{NU} (cf. also \cite[Lemma 3.2]%
{ZY}) where it was observed that a Hölder continuous estimate holds for a
similar parabolic equation with drift term $u\cdot \nabla \varphi $ whenever
the vector field $u$ is divergent free and belongs to the critical space $%
L^{4}\left(0,T; L^{4}(\Omega)\right) $. We begin by assuming that $\left\Vert
\varphi \right\Vert _{L^{\infty }(T^\prime,T;L^{\infty }\left( \Omega
\right) )}\leq R$, for some $R>0$ and observe that%
\begin{equation*}
L^{\infty }(T^\prime,T;G_{div})\cap L^{2}(T^\prime,T;V_{div})\hookrightarrow
L^{4}(T^{\prime},T;L^{4}\left( \Omega\right) ^{2}).
\end{equation*}%
Following \cite{LSU}, we let $k\in \left[ 0,R\right] $ and $%
\eta =\eta \left( x,t\right) \in \left[ 0,1\right] $ be a continuous
piecewise-smooth function which is supported on the space-time cylinders
$Q_{t_{0},t_{0}+\tau }\left( \rho \right) :=B_{\rho }\left( x_{0}\right)
\times \left( t_{0},t_{0}+\tau \right)$,
where $B_{\rho }\left( x_{0}\right) $ denotes the ball centered at $x_{0}$
of radius $\rho >0$. As usual for the interior Hölder regularity in (\ref
{holder-est}) one takes $x_{0}\in \Omega $, while $x_{0}\in \partial \Omega $
for the corresponding boundary estimate in (\ref{holder-est}) and then
exploit a standard compactness argument in which $\overline{\Omega }$ may be
covered by a finite number of such balls. We thus multiply the first
equation of (\ref{CH-c}) by $\eta ^{2}\varphi _{k}^{+}$, where $\varphi
_{k}^{+}:=\max \left\{ 0,\varphi -k\right\} ,$ integrate the resulting
identity over $Q_{t_{0},t}:=\left( t_{0},t\right) \times \Omega $, where $
T^\prime\leq t_{0}<t<t_{0}+\tau \leq T$, to deduce%
\begin{align}
& \int_{Q_{t_{0},t}}\partial _{t}\varphi \eta ^{2}\varphi
_{k}^{+}dxdt+\int_{Q_{t_{0},t}}(a\left( x\right) +F^{\prime\prime}\left( \varphi \right) )\nabla \varphi _{k}^{+}\cdot \nabla \left( \eta
^{2}\varphi _{k}^{+}\right) dxdt  \label{hold1} \\
& =\int_{Q_{t_{0},t}}u\varphi \cdot \nabla \left( \eta ^{2}\varphi
_{k}^{+}\right) dxdt+\int_{Q_{t_{0},t}}l\left( x,t\right) \cdot \nabla
\left( \eta ^{2}\varphi _{k}^{+}\right) dxdt,  \notag
\end{align}%
owing to the boundary condition of (\ref{CH-c}) and the fact that $u\in
L^{2}(T^\prime,T;V_{div})$. Here, we have set $l
=-\varphi \nabla a+\nabla J\ast \varphi $ for the sake of simplicity. Also we
notice that $\nabla \varphi _{k}^{+}\equiv \nabla \varphi $ only on the sets
where $\left\{ \varphi \left( x,t\right) >k\right\} $ while $\nabla \varphi
_{k}^{+}\equiv 0$ elsewhere. In addition, if $J\in W^{1,1}\left(
\mathbb{R}^{2}\right) $ then $l\in L^{\infty }(T^\prime,T;L^{\infty }\left( \Omega \right)^{2})$, since $\varphi $
is bounded and $a\in W^{1,\infty }\left( \Omega \right) {\color{black} \hookrightarrow} C\left(
\overline{\Omega }\right) $. From (\ref{hold1}) and assumption (H3), we
obtain%
\begin{align}
\frac{1}{2}\sup_{t\in \left( t_{0},t\right) }& \int_{\Omega }\left( \eta
\varphi _{k}^{+}\right) ^{2}\left( t\right)
dx+c_{0}\int_{Q_{t_{0},t}}\left\vert \nabla \left( \eta \varphi
_{k}^{+}\right) \right\vert ^{2}dxdt  \label{hold2} \\
& \leq \frac{1}{2}\int_{\Omega }\left( \eta \varphi _{k}^{+}\right)
^{2}\left( t_{0}\right) dx+\int_{Q_{t_{0},t}}\left( \varphi _{k}^{+}\right)
^{2}\left\vert \eta \partial _{t}\eta \right\vert dxdt  \notag \\
& +L\left( R\right) \int_{Q_{t_{0},t}}\left( \varphi _{k}^{+}\right)
^{2}\left\vert \nabla \eta \right\vert ^{2}dxdt+\int_{Q_{t_{0},t}}u\varphi
\cdot \nabla \left( \eta ^{2}\varphi _{k}^{+}\right) dxdt  \notag \\
& +\int_{Q_{t_{0},t}}l\left( x,t\right) \cdot \nabla \left( \eta ^{2}\varphi
_{k}^{+}\right) dxdt,  \notag
\end{align}%
for some function $L>0$ such that $|a\left( x\right) +F^{\prime \prime
}\left( \varphi \right) |\leq L\left( R\right) $. Indeed, we have%
\begin{equation*}
\nabla \varphi _{k}^{+}\cdot \nabla \left( \eta ^{2}\varphi _{k}^{+}\right)
=\left\vert \nabla \left( \eta \varphi _{k}^{+}\right) \right\vert
^{2}-\left\vert \nabla \eta \right\vert ^{2}\left( \varphi _{k}^{+}\right)
^{2}, \quad \text{a.e. in }Q_{t_{0},t}.
\end{equation*}%
To estimate the fourth term on the right-hand side of (\ref{hold1}) we use
the fact that $u\in L^{4}(T^\prime,T;L^{4}\left( \Omega \right)
^{2})$ is also divergent free, we argue by elementary H\"{o}lder's and
Young's inequalities as in the proof of \cite[Lemma 3.2]{ZY} to find%
\begin{align}
& \left\vert \int_{Q_{t_{0},t}}u\varphi \cdot \nabla \left( \eta ^{2}\varphi
_{k}^{+}\right) dxdt\right\vert  \label{hold3} \\
& \leq \frac{1}{4}\left\Vert \eta \varphi _{k}^{+}\right\Vert
_{L^{2}(Q_{t_{0},t})}^{2}+\frac{c_{0}}{4}\left\Vert \nabla \left( \eta
\varphi _{k}^{+}\right) \right\Vert
_{L^{2}(Q_{t_{0},t})}^{2}+C_{0}\left\Vert \nabla \eta \varphi
_{k}^{+}\right\Vert _{L^{2}(Q_{t_{0},t})}^{2},  \notag
\end{align}%
where $C_{0}>0$ depends on $c_{0}>0$ and the $L^{4}(T^\prime,T;
L^{4}\left( \Omega \right)^{2})$-norm of $u$ only. For the final
term on the right-hand side of (\ref{hold2}), we employ H\"{o}lder's and Young's
inequalities again to deduce%
\begin{align}
\left\vert \int_{Q_{t_{0},t}}l\left( x,t\right) \cdot \nabla \left( \eta
^{2}\varphi _{k}^{+}\right) dxdt\right\vert & =\left\vert
\int_{Q_{t_{0},t}}\left( l\left( x,t\right) \cdot \nabla \eta \varphi
_{k}^{+}\eta +\eta l\left( x,t\right) \cdot \nabla \left( \eta \varphi
_{k}^{+}\right) \right) dxdt\right\vert  \label{hold4} \\
& \leq C_{1}\int_{Q_{t_{0},t}}\left\vert \eta \right\vert ^{2}dxdt+\frac{1}{2%
}\int_{Q_{t_{0},t}}\left( \varphi _{k}^{+}\right) ^{2}\left\vert \nabla \eta
\right\vert ^{2}dxdt  \notag \\
& +\frac{c_{0}}{4}\int_{Q_{t_{0},t}}\left\vert \nabla \left( \eta \varphi
_{k}^{+}\right) \right\vert ^{2}dxdt,  \notag
\end{align}%
where $C_{1}>0$ depends only on $c_{0}>0$ and the
$L^{\infty }(T^\prime,T;L^{\infty }\left( \Omega \right)^{2})$-norm of $l$,
and hence on $R>0$. Inserting estimates (\ref{hold3})-(\ref%
{hold4}) into the right-hand side of (\ref{hold2}), we infer the existence
of a constant $C_{2}=C_{2}\left( C_{0},C_{1}\right) >0$ such that%
\begin{align}
\frac{1}{2}\sup_{t\in \left( t_{0},t\right) }& \int_{\Omega }\left( \eta
\varphi _{k}^{+}\right) ^{2}\left( t\right)
dx+c_{0}\int_{Q_{t_{0},t}}\left\vert \nabla \left( \eta \varphi
_{k}^{+}\right) \right\vert ^{2}dxdt  \label{hold5} \\
& \leq \frac{1}{2}\int_{\Omega }\left( \eta \varphi _{k}^{+}\right)
^{2}\left( t_{0}\right) dx  \notag \\
& +C_{2}\left( \int_{Q_{t_{0},t}}\left( \varphi _{k}^{+}\right)
^{2}\left\vert \eta \partial _{t}\eta \right\vert
dxdt+\int_{Q_{t_{0},t}}\left( \varphi _{k}^{+}\right) ^{2}\left\vert \nabla
\eta \right\vert ^{2}dxdt+\int_{Q_{t_{0},t}}\left\vert \eta \right\vert
^{2}dxdt\right) .  \notag
\end{align}%
Arguing in a similar fashion, inequality (\ref{hold5}) also holds with $%
\varphi $ replaced by $-\varphi .$ In particular, such inequalities imply
that the generalized solution $\varphi $ of (\ref{CH-c}) is an element of $%
\mathcal{B}_{2}(Q_{T^\prime,T},R,\gamma ,\omega ,0,\varkappa )$ in the
sense of \cite[Chapter II, Section 7 ]{LSU}, for some $\gamma =\gamma \left(
c_{0},R\right) $ and $\omega ,\varkappa >0$ (cf., in particular, the
inequalities in \cite[Section V, (1.12)-(1.13)]{LSU}). Therefore, on account
of \cite[Chapter V, Theorem 1.1]{LSU}, the H\"{o}lder continuity (\ref%
{holder-est}) of the solution of (\ref{CH-c}) follows in a standard way.
This ends the proof.
\end{proof}

\begin{cor}
Let $d=2$. If $\left[ u,\varphi \right]$ is any weak solution to
problem \eqref{sy1}--\eqref{sy6} in the sense of Theorem \ref{thm} then, for every $%
\tau >0,$ we have
\begin{equation*}
\left\Vert \varphi \right\Vert _{C^{\delta /2,\delta }\left( [\tau ,\infty
)\times \overline{\Omega }\right) }\leq C_{\tau },
\end{equation*}%
for some $C_{\tau }\sim \tau ^{-\gamma },$ $\gamma >0$, depending only on $
\mathcal{E}(u_{0},\varphi _{0})$ and on the other parameters of the
problem.
\end{cor}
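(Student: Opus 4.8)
The plan is to read the Corollary as a direct application of Lemma~\ref{holder} on the time slabs $[\tau,T]$, $T>\tau$, the whole point being to verify the two structural hypotheses of that lemma with constants that are controlled by \emph{local} (bounded-time-window) quantities and are therefore \emph{uniform in $T$}. Recall that Lemma~\ref{holder} asks for (i) $u\in L^{\infty}(\tau,T;G_{div})\cap L^{2}(\tau,T;V_{div})$, the H\"older constant depending on $u$ only through its $L^{4}(L^{4})$-norm over bounded space--time cylinders, and (ii) that $\varphi$ be a \emph{bounded} weak solution, the constant depending on $\|\varphi\|_{L^{\infty}(\tau,T;L^{\infty}(\Omega))}$. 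Since the De Giorgi--Nash--Moser estimate of \cite{LSU} invoked in Lemma~\ref{holder} is local in time, once (i) and (ii) hold with $T$-independent bounds the resulting seminorm in \eqref{holder-est} on $[\tau,T]\times\overline\Omega$ is itself independent of $T$, and letting $T\to\infty$ gives the assertion on $[\tau,\infty)\times\overline\Omega$.

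First I would control $u$. By the dissipative estimate \eqref{dissest}, $\mathcal E(u(t),\varphi(t))$ is bounded for all $t\ge 0$ by a quantity depending only on $\mathcal E(u_0,\varphi_0)$ and the data; in particular $\|u(t)\|\le C$ for every $t$. Integrating the energy identity \eqref{eniden} over $[t,t+1]$, using $\nu\ge\nu_1>0$, $\|\nabla u\|^2=2\|Du\|^2$, Young's inequality and $h\in L^{2}_{tb}$, one gets $\int_{t}^{t+1}\|\nabla u\|^{2}\,ds\le C$ uniformly in $t\ge 0$. The two-dimensional Ladyzhenskaya inequality $\|u\|_{L^{4}}^{4}\le c\|u\|^{2}\|\nabla u\|^{2}$ then yields $\sup_{t\ge\tau}\int_{t}^{t+1}\|u\|_{L^{4}}^{4}\,ds\le C$, so the $L^{4}L^{4}$-norm of $u$ over every bounded cylinder $Q_{t_0,t_0+\tau}(\rho)$ entering \eqref{hold3} is uniformly bounded. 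Thus hypothesis (i) holds with $T$-independent constants.

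The main obstacle is hypothesis (ii): upgrading the weak solution from the natural class $\varphi\in L^{\infty}(0,T;L^{2+2q}(\Omega))\cap L^{2}(0,T;V)$ of Theorem~\ref{thm} to $\varphi\in L^{\infty}((\tau,\infty)\times\Omega)$. This is exactly the gap that Lemma~\ref{holder} leaves open, since its constant $L(R)$ presupposes a pointwise bound on $a+F''(\varphi)$. I would obtain the bound by a De Giorgi--Moser iteration run directly on the nonlocal Cahn--Hilliard equation, reproducing the energy inequality leading to \eqref{hold5}: testing with powers of the truncations $\varphi_k^{+}=(\varphi-k)^{+}$ against a space--time cutoff, bounding the convective term as in \eqref{hold3} (here $u\in L^{4}L^{4}$ and $\mathrm{div}\,u=0$ are what keep the drift subcritical) and the kernel term $l=-\varphi\nabla a+\nabla J\ast\varphi$ as in \eqref{hold4}. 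The lower bound (H3) gives the needed coercivity $c_0|\nabla(\eta\varphi_k^{+})|^2$ in the good direction, while the energy bound \eqref{est36} furnishes the first rung of the iteration; the delicate point is the \emph{upper} growth of $F''(\varphi)$ in the term $(a+F'')|\nabla\eta|^2(\varphi_k^{+})^2$, which for a polynomially growing potential is not a priori bounded and must be absorbed using the coercivity (H4) and the higher integrability $\varphi\in L^{2+2q}$ (alternatively, one may invoke the smoothing property established for this nonlocal equation in the circle of ideas of \cite{BRB} and \cite{GG4}). Choosing the time cutoff to rise from $0$ to $1$ across $(0,\tau)$ makes the term carrying $\partial_t\eta\sim\tau^{-1}$ responsible for the factor $\tau^{-\gamma}$, and because \eqref{dissest}--\eqref{est36} bound all ingredients uniformly in $T$, the outcome is $\|\varphi\|_{L^{\infty}((\tau,\infty)\times\Omega)}\le C_\tau\sim\tau^{-\gamma}$, uniform in $T$.

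With (i) and (ii) verified, Lemma~\ref{holder} applied on $(\tau,T)$ (i.e.\ with $T'=\tau$) gives \eqref{holder-est} with a constant depending only on $\sup_{t\ge\tau}\|\varphi(t)\|_{L^{\infty}}$ and on the per-cylinder $L^{4}L^{4}$-norm of $u$, both of which are uniform in $T$ by the preceding two paragraphs. By the local-in-time character of the \cite{LSU} H\"older estimate this bound does not deteriorate as $T\to\infty$, so it passes to the limit and yields $\|\varphi\|_{C^{\delta/2,\delta}([\tau,\infty)\times\overline\Omega)}\le C_\tau\sim\tau^{-\gamma}$, with $C_\tau$ depending only on $\mathcal E(u_0,\varphi_0)$ and the fixed parameters, as claimed.
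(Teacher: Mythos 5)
Your proposal is correct and follows essentially the same route as the paper: the paper's proof is a one-line appeal to Theorem~\ref{thm} (for the uniform energy/dissipative bounds), to \cite[Lemma 2.10]{GG4} (for the instantaneous $L^{\infty}$ smoothing bound $\Vert\varphi(t)\Vert_{L^{\infty}}\leq C_{\tau}$ for $t\geq\tau$), and then to Lemma~\ref{holder}. The only difference is that where you sketch the De Giorgi--Moser iteration yielding the $L^{\infty}$ bound (and honestly flag the delicate absorption of the upper growth of $F''$), the paper simply cites the existing smoothing lemma of \cite{GG4} — which you also identify as the admissible shortcut — so there is no substantive gap.
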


\begin{proof}
The claim follows from the statement of Theorem \ref{thm} and the
application of Lemma \ref{holder} and \cite[Lemma 2.10]{GG4}.
\end{proof}
}

The following result on the existence of a strong solution generalizes \cite[Theorem 2]{FGK}
to the case of nonconstant viscosity.

\begin{thm}
\label{thmncvisc} {\color{black} Let $d=2$ and suppose that (H1)--(H5) are
satisfied with either $J\in W^{2,1}(B_{\delta })$ or $J$ admissible}. Assume
that $u_{0}\in V_{div}$, {\color{black}$\varphi_0\in V\cap C^\beta(\overline{\Omega})$, for
some $\beta>0$},
%$\varphi _{0}\in W^{1,p}{\color{black}(\Omega)},$
%with $p>2$
 and $h\in L_{loc}^{2}(\mathbb{R}^{+};G_{div})$. Then, for
every $T>0$, there exists a solution {\color{black} $[u,\varphi]$} to \eqref{sy1}--\eqref{sy6} such that
\begin{align}
& u\in L^{\infty }(0,T;V_{div})\cap L^{2}(0,T;H^{2}(\Omega )^{2}),\text{ }%
u_{t}\in L^{2}(0,T;G_{div})  \label{regps1} \\
& %\varphi \in L^{\infty }(0,T;W^{1,p}{\color{black}(\Omega)})
{\color{black}\varphi,\mu\in L^\infty(0,T;V)\cap L^2(0,T;H^2(\Omega))}\cap L^{\infty
}(\Omega \times (0,T)),\label{regps1bis}\\
&%\text{ }
%\mu \in L^{\infty }(0,T;V),\text{ }
\varphi_{t},\mu_t\in L^{2}(0,T;H).  \label{regps2}
\end{align}%
Furthermore, suppose in addition that $F\in C^{3}(\mathbb{R})$ and
$\varphi _{0}\in H^{2}(\Omega )$. Then, system \eqref{sy1}-\eqref{sy6} admits
a strong solution on $[0,T]$ satisfying {\color{black}\eqref{regps1}} and
\begin{align}
& \varphi{\color{black},\mu} \in L^{\infty }(0,T;H^{2}(\Omega )),  \label{regps3} \\
& \varphi _{t}{\color{black},\mu_t}\in L^{\infty }(0,T;H)\cap L^{2}(0,T;V).  \label{regps4}
\end{align}
\end{thm}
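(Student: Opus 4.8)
The plan is to construct the solution by a Faedo--Galerkin scheme based on the eigenfunctions of the Stokes operator (for $u$) and of the Neumann Laplacian $B_N$ (for $\varphi$), to derive a priori estimates uniform in the discretization, and then to pass to the limit by Aubin--Lions--Simon compactness. The starting point is the weak solution of Theorem \ref{thm}, which already provides the basic bounds \eqref{est36}. The crucial preliminary observation is that, since (H1) and (H3) hold and $\varphi$ is bounded, Lemma \ref{holder} together with the preceding corollary yields $\varphi\in C^{\delta/2,\delta}([\tau,T]\times\overline{\Omega})$ for $\tau>0$, and thanks to $\varphi_0\in C^\beta(\overline{\Omega})$ this can be propagated up to $t=0$. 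Because $\nu$ is locally Lipschitz by (H2), this gives $\nu(\varphi)\in C^{\delta}(\overline{\Omega})$ with $\nu_1\le\nu(\varphi)\le\nu_2$ uniformly in $t$, which is exactly the hypothesis required to invoke the variable--coefficient Stokes estimate of Proposition \ref{Stok-non}.

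For the first regularity level I would perform the ``second energy estimate''. Testing the Navier--Stokes equation with $u_t$ and using $2\,\mathrm{div}(\nu(\varphi)Du)=\nu(\varphi)\Delta u+2(Du)\nabla\nu(\varphi)$ (recall $\mathrm{div}\,u=0$), one rewrites $2(\nu(\varphi)Du,Du_t)=\frac{d}{dt}\int_\Omega\nu(\varphi)|Du|^2-\int_\Omega\nu'(\varphi)\varphi_t|Du|^2$, obtaining a differential inequality for $\int_\Omega\nu(\varphi)|Du|^2$ with $\|u_t\|^2$ on the left. The Korteweg term is handled via $\mu\nabla\varphi=\nabla\big(F(\varphi)+a\varphi^2/2\big)+K(\varphi)$ with $K(\varphi)=-\tfrac12\varphi^2\nabla a-(J\ast\varphi)\nabla\varphi$, so that against the divergence--free $u_t$ only $K(\varphi)$ survives, controlled through $\|\varphi\|_{L^\infty}$, $\|\nabla a\|_{L^\infty}$ and Lemma \ref{admiss}. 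Simultaneously I would test the Cahn--Hilliard equation $\varphi_t-\Delta\mu=-u\cdot\nabla\varphi$ with $\varphi_t$; since $\nabla\mu=(a+F''(\varphi))\nabla\varphi+\varphi\nabla a-\nabla J\ast\varphi$ and $a+F''(\varphi)\ge c_0$ by (H3), this produces $\frac{d}{dt}\int_\Omega(a+F''(\varphi))|\nabla\varphi|^2$ together with a coercive $c_0\|\varphi_t\|^2$. Adding the two inequalities and closing by Gronwall yields
\[
u\in L^\infty(0,T;V_{div}),\quad u_t\in L^2(0,T;G_{div}),\quad \varphi\in L^\infty(0,T;V),\quad \varphi_t\in L^2(0,T;H).
\]
With these bounds the right--hand side $-u_t-(u\cdot\nabla)u+K(\varphi)+h+2(Du)\nabla\nu(\varphi)$ of the stationary Stokes problem $-\nu(\varphi)\Delta u+\nabla\widetilde\pi=\cdots$ (the gradient part of $\mu\nabla\varphi$ being reabsorbed into $\widetilde\pi$) lies in $L^2(0,T;G_{div})$, so Proposition \ref{Stok-non} gives $u\in L^2(0,T;H^2(\Omega)^2)$. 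Finally $\Delta\mu=\varphi_t+u\cdot\nabla\varphi\in L^2(0,T;H)$ with $\partial_n\mu=0$ yields $\mu\in L^2(0,T;H^2)$ by Neumann elliptic regularity, while $\mu\in L^\infty(0,T;V)$ and $\mu_t\in L^2(0,T;H)$ follow algebraically from the definition of $\mu$; solving the coercive relation $(a+F''(\varphi))\nabla\varphi=\nabla\mu-\varphi\nabla a+\nabla J\ast\varphi$ for second derivatives then gives $\varphi\in L^\infty(0,T;V)\cap L^2(0,T;H^2)$, i.e.\ \eqref{regps1}--\eqref{regps2}.

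For the strong solution I would upgrade these bounds under $F\in C^3$ and $\varphi_0\in H^2$. Differentiating the Cahn--Hilliard equation in time and testing with $w:=\varphi_t$, using $(u\cdot\nabla w,w)=0$ and $a+F''(\varphi)\ge c_0$, one reaches $\frac12\frac{d}{dt}\|w\|^2+c_0\|\nabla w\|^2\le (\text{controlled terms})$, where the new terms $\int_\Omega w\,F'''(\varphi)\nabla\varphi\cdot\nabla w$ and $(u_t\cdot\nabla\varphi,w)$ are absorbed using $F'''$ bounded (as $\varphi\in L^\infty$), $\nabla\varphi\in L^\infty(0,T;L^4)$ and $u_t\in L^2(0,T;G_{div})$. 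The datum $\varphi_t(0)=\Delta\mu(0)-u_0\cdot\nabla\varphi_0\in H$, which needs precisely $\varphi_0\in H^2$ and $F\in C^3$, legitimizes the Gronwall argument and yields $\varphi_t,\mu_t\in L^\infty(0,T;H)\cap L^2(0,T;V)$. Then $\Delta\mu=\varphi_t+u\cdot\nabla\varphi\in L^\infty(0,T;H)$ gives $\mu\in L^\infty(0,T;H^2)$ and, through the coercive relation above, $\varphi\in L^\infty(0,T;H^2)$, i.e.\ \eqref{regps3}--\eqref{regps4}. All estimates being uniform in the Galerkin index, the passage to the limit and the recovery of the initial data are standard.

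The main obstacle I anticipate is concentrated in the velocity regularity and is twofold. First, the coupling term $\int_\Omega\nu'(\varphi)\varphi_t|Du|^2$ in the $u_t$--test genuinely ties the two components together: it must be controlled by $\|\nu'\|_{L^\infty}\|\varphi_t\|\,\|Du\|_{L^4}^2$ with $\|Du\|_{L^4}^2\lesssim\|\nabla u\|\,\|u\|_{H^2}$, so that the very $H^2$--norm one is trying to estimate re-enters the inequality; the same happens for $(u\cdot\nabla)u$ and for $2(Du)\nu'(\varphi)\nabla\varphi$ inside the Stokes right--hand side. Closing the argument requires feeding the bound of Proposition \ref{Stok-non} back into the differential inequality, absorbing the top--order contributions with a small constant, and keeping the residual Gronwall coefficient in $L^1(0,T)$, so that the velocity and phase $H^2$--norms are handled simultaneously. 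Second, Proposition \ref{Stok-non} is applicable only because $\nu(\varphi)\in C^\delta(\overline{\Omega})$, and it is precisely to secure this H\"older regularity of the coefficient --- unavailable from the mere energy bounds --- that the stronger kernel assumption ($J\in W^{2,1}(B_\delta)$ or $J$ admissible) and Lemma \ref{holder} are needed; verifying that the extra term $2(Du)\nu'(\varphi)\nabla\varphi$ is genuinely in $L^2$ is the delicate point that distinguishes this argument from the constant--viscosity case of \cite{FGK}.
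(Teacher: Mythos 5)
Your skeleton identifies the right ingredients (H\"older continuity of $\varphi$ via Lemma \ref{holder}, the variable-coefficient Stokes estimate of Proposition \ref{Stok-non}, the $u_t$-test with the term $\int_\Omega\nu'(\varphi)\varphi_t|Du|^2$), but the way you propose to close the estimates has a genuine gap. You test the Cahn--Hilliard equation with $\varphi_t$ and ``add the two inequalities''. Two problems. First, testing with $\varphi_t$ forces you to differentiate $(a+F''(\varphi))|\nabla\varphi|^2$ in time, producing $\int_\Omega F'''(\varphi)\varphi_t|\nabla\varphi|^2$; but the first part of the theorem only assumes $F\in C^{2,1}_{loc}$ (H3), and moreover this term is controlled by $\|\varphi_t\|\,\|\nabla\varphi\|_{L^4}^2$, which drags in $\|\varphi\|_{H^2}$ before it is available. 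The paper instead tests with $\mu_t$, so that the coercive term is $\int_\Omega(a+F''(\varphi))\varphi_t^2\ge c_0\|\varphi_t\|^2$ with no third derivative of $F$, and --- crucially --- handles the convective term through the identity $u\cdot\nabla\varphi=b^{-1}u\cdot\nabla\mu+b^{-1}u\cdot(\nabla J\ast\varphi-\nabla a\,\varphi)$ with $b=a+F''(\varphi)\ge c_0$, together with $\varphi_t=-B_N\mu-u\cdot\nabla\varphi$ to absorb $\|B_N\mu\|^2$ into $\|\varphi_t\|^2$. This lets the whole Cahn--Hilliard estimate close using only the \emph{weak} regularity $u\in L^\infty(G_{div})\cap L^2(V_{div})$. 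Second, the order matters and cannot be ``simultaneous'': in the Navier--Stokes inequality the coupling term is bounded by $\delta\|u\|_{H^2}^2+C\|Du\|^2\|\varphi_t\|^2$, so $\|\varphi_t\|^2$ enters as a Gronwall coefficient and must already be known to lie in $L^1(0,T)$. If you try to close both inequalities at once, the term $C\|Du\|^2\|\varphi_t\|^2$ can be absorbed neither into the dissipation $c_0\|\varphi_t\|^2$ (since $\|Du\|^2$ is not small) nor into the Gronwall coefficient (since $\|\varphi_t\|^2\in L^1$ is exactly what is not yet proved). The paper's Step 1 / Step 2 decoupling --- Cahn--Hilliard regularity first, velocity regularity second --- is not a presentational choice but the point of the argument.

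A second gap concerns your approximation scheme. A Faedo--Galerkin basis of Stokes eigenfunctions does not deliver the $H^2$-estimate for $u$ in the variable-viscosity case: Proposition \ref{Stok-non} is an elliptic regularity statement for the exact non-divergence-form Stokes system with H\"older coefficient, and it does not apply to the Galerkin projection of $-\mathrm{div}(2\nu(\varphi)Du_n)$, which is not diagonal in the eigenbasis (this is precisely why the constant-viscosity argument of \cite{FGK} does not carry over). The paper's Step 3 circumvents this by mollifying the coefficient, $\nu(\varphi_\varepsilon)$, invoking the local strong solvability result of \cite{A1} for the Navier--Stokes system with variable viscosity, performing the Step 2 estimates on these genuine strong solutions, and passing to the limit $\varepsilon\to0$; uniqueness for the Navier--Stokes equations with prescribed viscosity then identifies the limit with the weak solution. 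Without some such device your a priori estimates remain formal at the top order.
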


\begin{oss}
{\upshape
\label{weak-H2} Assumption (H2) in the statement of Theorem \ref%
{thmncvisc} (and subsequent Theorem \ref{weak-uniq}) can be replaced by a
more general one, i.e., it suffices to assume that $\nu $ is locally
Lipschitz on $\mathbb{R}$ and the existence of $\nu _{1}>0$ such that%
\begin{equation}
\nu (s)\geq \nu _{1},\qquad \forall\,s\in \mathbb{R}.  \label{H2bis}
\end{equation}%
Indeed, {an upper bound for }$\nu \left( \varphi \right) $ (and $%
\nu^{\prime}\left( \varphi \right) ,$ respectively) in $L^{\infty }(\Omega
\times (0,T))$ can be easily produced on account of the fact that {$\Vert
\varphi \Vert _{L^{\infty }(\Omega \times (0,T))}\leq C_{R},$ for any
\thinspace }$R>0$ such that $\left\Vert \varphi _{0}\right\Vert _{L^{\infty
}}\leq R.$ }
\end{oss}

{\color{black}
\begin{proof}
\emph{Step 1.} We first need to establish the $L^{\infty}(0,T;V)$-regularity
for $\mu $ and $\varphi $. The argument used here differs from the one
devised in \cite{FGK}. Indeed, we  cannot easily exploit the regularity $%
u\in L^{2}(0,T;H^{2})$ as it happens for the constant viscosity case.
Let us consider equation (\ref{sy1}) whose generalized
(weak) solution also satisfies (\ref{CH-c}). First we recall that $\varphi $
is bounded (see \cite[Lemma 2.10]{GG4}, cf. also \cite[Theorem 2]{FGK}) and
thus, by Lemma \ref{holder}, we infer that $\varphi \in C^{\delta /2,\delta
}\left( \left[ 0,T\right] \times \overline{\Omega }\right)$ for some  $0<
\delta \leq \min \left\{ \alpha ,\beta\right\}$.
%1-2/p\right\} ,$ since $\varphi _{0}\in
%W^{1,p}\left( \Omega \right) \hookrightarrow C^{0,1-2/p}\left( \overline{\Omega }%
%\right) $ for $p>2$.
By assumption (H2), $\nu \left( \varphi \right) \in
C^{\delta /2,\delta }\left( \left[ 0,T\right] \times \overline{\Omega }%
\right) $ since $\nu $ is a (locally) Lipschitz function on $\mathbb{R}$;
moreover, there exists a positive constant $\nu _{2}=\nu _{2}\left( R\right)
>0$ such that $\nu _{2}\geq \nu (\varphi )\geq \nu _{1}$, almost everywhere in $\left(
0,T\right) \times \Omega $, owing once again to the boundedness of $\varphi$
(cf. Remark \ref{weak-H2}). In the same fashion, we define
$b\left( x,t,\varphi \right) =a\left( x\right) +F^{\prime \prime}\left(
\varphi \right)$ and observe that it is measurable and bounded (i.e., $c_{0}\leq b\leq
b_{0}=b_{0}\left( R,\left\Vert a\right\Vert _{L^{\infty }}\right) $) for all
$\left( x,t,\varphi \right) $, in light of $a\in W^{1,\infty }\left( \Omega
\right) \hookrightarrow C\left( \overline{\Omega }\right) $ and the fact that $%
F^{\prime \prime }\left( \varphi \right) \in C\left( \left[ 0,T\right]
\times \overline{\Omega }\right)$. In fact, as a function of $\left(
x,t\right) \in Q_{0,T}$,  $b\left(\cdot,\cdot,\varphi(\cdot,\cdot) \right) $
is also continuous due to the H\"{o}lder continuity of $\varphi $.
Henceforth we shall denote by
%$Q$ a
%continuous monotone increasing function of its argument, and
$R$ a constant such that $\Vert \varphi \Vert _{L^{\infty
}(\Omega \times (0,T))}\leq R$.

We now test the nonlocal Cahn-Hilliard equation by $\mu _{t}=\big(%
a+F^{\prime \prime }(\varphi )\big)\varphi _{t}-J\ast \varphi _{t}$ in $H$
to deduce
\begin{align}
& \int_{\Omega }\varphi _{t}\mu _{t}+\int_{\Omega }(u\cdot \nabla \varphi
)\mu _{t}+\frac{1}{2}\frac{d}{dt}\Vert \nabla \mu \Vert ^{2}  \notag \\
& =\int_{\Omega }(a+F^{\prime \prime }(\varphi ))\varphi _{t}^{2}-(\varphi
_{t},J\ast \varphi _{t})+\int_{\Omega }(u\cdot \nabla \varphi )\mu _{t}+%
\frac{1}{2}\frac{d}{dt}\Vert \nabla \mu \Vert ^{2}=0.  \label{diffid1}
\end{align}%
This identity was considered in \cite{FGK}, but now we cannot
use the $H^{2}$-norm of $u$ to estimate the convective term
(i.e., the third term in the second line of (\ref{diffid1})).
Here we exploit the identity
\begin{equation}
u\cdot \nabla \varphi =b^{-1}u\cdot \nabla \mu +b^{-1}u\cdot \left( \nabla
J\ast \varphi -\nabla a\varphi \right) \label{idd}
\end{equation}%
and we find
\begin{align}
& \Big|\int_{\Omega }(u\cdot \nabla \varphi )\mu _{t}\Big|=\Big|\int_{\Omega
}\left( b^{-1}u\cdot \nabla \mu \right) \mu _{t}+\int_{\Omega }b^{-1}\left[
u\cdot \left( \nabla J\ast \varphi -\nabla a\varphi \right) \right] \mu _{t}%
\Big|  \label{est22} \\
& \leq c_{0}^{-1}\left( \Vert u\cdot \nabla \mu \Vert \Vert \mu _{t}\Vert
+\Vert u\cdot \left( \nabla J\ast \varphi -\nabla a\varphi \right) \Vert
\Vert \mu _{t}\Vert \right)  \notag \\
& \leq Q_{J,c_{0}}(R)\Vert \varphi _{t}\Vert \left( \Vert u\cdot
\nabla \mu \Vert +\Vert u\Vert \right)  \notag \\
& \leq \frac{c_{0}}{4}\Vert \varphi _{t}\Vert ^{2}+Q%
_{c_{0},J}(R)\left( \Vert u\Vert _{L^{4}}^{2}\Vert \nabla \mu \Vert
_{L^{4}}^{2}+\left\Vert u\right\Vert ^{2}\right)  \notag \\
& \leq \frac{c_{0}}{4}\Vert \varphi _{t}\Vert ^{2}+Q%
_{c_{0},J}(R)\Vert u\Vert \Vert \nabla u\Vert \Vert \nabla \mu \Vert \Vert
\mu \Vert _{H^{2}}+Q_{c_{0},J}(R)\left\Vert u\right\Vert ^{2}
\notag \\
& \leq \frac{c_{0}}{4}\Vert \varphi _{t}\Vert ^{2}+Q%
_{c_{0},J,\epsilon }(R)\big(\Vert u\Vert ^{2}\Vert \nabla u\Vert ^{2}\big)%
\Vert \nabla \mu \Vert ^{2}  \notag \\
& +Q_{c_{0},J}(R)\left\Vert u\right\Vert ^{2}+\epsilon \left( \Vert
B_{N}\mu \Vert ^{2}+\left\Vert \mu \right\Vert ^{2}\right) ,  \notag
\end{align}%
for any $\epsilon >0$. Furthermore, we have
\begin{align}
& |(\varphi _{t},J\ast \varphi _{t})|\leq \Vert \varphi _{t}\Vert
_{V^{\prime }}\Vert J\ast \varphi _{t}\Vert _{V}\leq \Vert \varphi _{t}\Vert
_{V^{\prime }}\Vert J\Vert _{W^{1,1}}\Vert \varphi _{t}\Vert  \notag \\
& \leq \frac{c_{0}}{4}\Vert \varphi _{t}\Vert ^{2}+c\Vert J\Vert
_{W^{1,1}}^{2}\Vert \varphi _{t}\Vert _{V^{\prime }}^{2}.  \label{est23}
\end{align}%
Inserting \eqref{est22}, \eqref{est23} into \eqref{diffid1}, and keeping $%
\epsilon >0$ arbitrary, we get the following differential inequality
\begin{align}
& \frac{d}{dt}\Vert \nabla \mu \Vert ^{2}+c_{0}\Vert \varphi _{t}\Vert ^{2}
\label{est25} \\
& \leq Q_{c_{0},J,\epsilon }(R)\big(\Vert u\Vert ^{2}\Vert \nabla
u\Vert ^{2}\big)\Vert \nabla \mu \Vert ^{2}+c\Vert J\Vert
_{W^{1,1}}^{2}\Vert \varphi _{t}\Vert _{V^{\prime }}^{2}  \notag \\
& +Q_{c_{0},J}(R)\left\Vert u\right\Vert ^{2}+\epsilon \left( \Vert
B_{N}\mu \Vert ^{2}+\left\Vert \mu \right\Vert ^{2}\right) .  \notag
\end{align}%
Moreover, observing that $\varphi_t =-B_{N}\mu -u\cdot \nabla \varphi $, we have
\begin{equation}
\left\Vert \varphi _{t}\right\Vert ^{2}\geq \frac{1}{2}\left\Vert B_{N}\mu
\right\Vert ^{2}-\left\Vert u\cdot \nabla \varphi \right\Vert ^{2},
\label{idd2}
\end{equation}%
owing to the basic inequality $\left( a-b\right) ^{2}\geq \left( 1/2\right)
a^{2}-b^{2}$. We can estimate the last term using (\ref{idd}).
Thus, recalling (\ref{est22}), we obtain
\begin{align*}
\left\Vert u\cdot \nabla \varphi \right\Vert ^{2}& \leq 2c_{0}^{-2}\left(
\Vert u\cdot \nabla \mu \Vert ^{2}+\Vert u\cdot \left( \nabla J\ast \varphi
-\nabla a\varphi \right) \Vert ^{2}\right) \\
& \leq Q_{c_{0},J,\epsilon }(R)\big(\Vert u\Vert ^{2}\Vert \nabla
u\Vert ^{2}\big)\Vert \nabla \mu \Vert ^{2}+Q_{c_{0},J}(R)\left%
\Vert u\right\Vert ^{2} \\
& +\epsilon \left( \Vert B_{N}\mu \Vert ^{2}+\left\Vert \mu \right\Vert
^{2}\right) .
\end{align*}%
Thus, from (\ref{est25}) by virtue of (\ref{idd2}) we further derive%
\begin{align}
& \frac{d}{dt}\Vert \nabla \mu \Vert ^{2}+\frac{c_{0}}{2}\left( \Vert
\varphi _{t}\Vert ^{2}+\frac{1}{2}\left\Vert B_{N}\mu \right\Vert ^{2}\right)
\label{idd3} \\
& \leq Q_{c_{0},J,\epsilon }(R)\big(\Vert u\Vert ^{2}\Vert \nabla
u\Vert ^{2}\big)\Vert \nabla \mu \Vert ^{2}+c\Vert J\Vert
_{W^{1,1}}^{2}\Vert \varphi _{t}\Vert _{V^{\prime }}^{2}  \notag \\
& +Q_{c_{0},J}(R)\left\Vert u\right\Vert ^{2}+2\epsilon \left(
\Vert B_{N}\mu \Vert _{H^{2}}^{2}+\left\Vert \mu \right\Vert ^{2}\right) ,
\notag
\end{align}%
for any $\epsilon >0$. Let us now choose a sufficiently small $\epsilon \leq
c_{0}/8$ in order to absorb the $L^{2}$-norm of $B_{N}\mu $ into the
left-hand side and observe that $\mu \in L^{\infty }\left( \Omega\times (0,T)\right)$
since $\varphi $ is bounded. Thus, we find
\begin{align}
\varphi & \in L^{\infty }(0,T;V),\text{ }\varphi _{t}\in L^{2}(0,T;H),\qquad
\label{est25bis} \\
\mu & \in L^{\infty }(0,T;V)\cap L^{2}\left( 0,T;H^{2}\left( \Omega \right)
\right), \notag
\end{align}%
by means of Gronwall's inequality (cf. also Lemma \ref{admiss}), using the
initial condition $\varphi _{0}\in V\cap L^{\infty }\left( \Omega \right) $
(which implies $\mu _{0}\in V$), the regularity properties of the weak
solution given by the first of \eqref{regpw1} and by \eqref{regpw4}, and the
fact that%
\begin{equation*}
c_{0}\Vert \nabla \varphi \Vert ^{2}-Q(R)\leq \Vert \nabla \mu
\Vert ^{2}\leq Q(R)\big(\Vert \nabla \varphi \Vert ^{2}+1\big).
\end{equation*}%
We now control $\nabla \varphi $ in terms of $\nabla \mu $ in $L^{p}$.
In order to do that we take the gradient of $\mu =a\varphi -J\ast \varphi +F^{\prime }(\varphi
)$, multiply it by $\nabla \varphi |\nabla \varphi |^{p-2}$ and integrate
the resulting identity on $\Omega $. This gives
\begin{equation*}
\int_{\Omega }\nabla \varphi |\nabla \varphi |^{p-2}\cdot \nabla \mu
=\int_{\Omega }(a+F^{\prime \prime }(\varphi ))|\nabla \varphi
|^{p}+\int_{\Omega }(\varphi \nabla a-\nabla J\ast \varphi )\cdot \nabla
\varphi |\nabla \varphi |^{p-2}.
\end{equation*}%
So that, by (H3), we find
\begin{align*}
c_{0}\Vert \nabla \varphi \Vert _{L^{p}}^{p}& \leq \Vert \nabla \varphi
\Vert _{L^{p}}^{p-1}\Vert \nabla \mu \Vert _{L^{p}}+(\Vert \nabla a\Vert
_{L^{\infty }}+\Vert \nabla J\Vert _{L^{1}})\Vert \varphi \Vert
_{L^{p}}\Vert \nabla \varphi \Vert _{L^{p}}^{p-1} \\
& \leq \frac{c_{0}}{2}\Vert \nabla \varphi \Vert _{L^{p}}^{p}+c\Vert \nabla
\mu \Vert _{L^{p}}^{p}+Q(R)(\Vert \nabla a\Vert _{L^{\infty
}}+\Vert \nabla J\Vert _{L^{1}})^{p},
\end{align*}%
which yields
\begin{equation}
\Vert \nabla \varphi \Vert _{L^{p}}\leq c\Vert \nabla \mu \Vert _{L^{p}}+%
Q(R).\label{est38}
\end{equation}%
This estimate implies in particular
\begin{equation}
\varphi \in L^{4}\left( 0,T;W^{1,4}\left( \Omega \right) \right) ,
\label{est28bb}
\end{equation}%
owing to the second of (\ref{est25bis}). We now control the $H^{2}$-norm of $%
\varphi $ (or at least the $L^{2}$-norm of the second derivatives $\partial
_{ij}^{2}\varphi :=\frac{\partial ^{2}\varphi }{\partial x_{i}\partial x_{j}}
$) in terms of the $H^{2}$-norm of $\mu $ and (\ref{est28bb}). To this aim
apply the second derivative operator $\partial _{ij}^{2}$ to \eqref{sy2},
multiply the resulting identity by $\partial _{ij}^{2}\varphi $ and
integrate on $\Omega $. %Using the assumption $J\in W^{2,1}(\mathbb{R}^2)$,
This entails
\begin{align}
& \int_{\Omega }\partial _{ij}^{2}\mu \partial _{ij}^{2}\varphi
=\int_{\Omega }(a+F^{\prime \prime }(\varphi ))(\partial _{ij}^{2}\varphi
)^{2}+\int_{\Omega }(\partial _{i}a\partial _{j}\varphi +\partial
_{j}a\partial _{i}\varphi )\partial _{ij}^{2}\varphi  \notag \\
& +\int_{\Omega }(\varphi \partial _{ij}^{2}a-\partial _{i}(\partial
_{j}J\ast \varphi )\partial _{ij}^{2}\varphi +\int_{\Omega }F^{\prime \prime
\prime }(\varphi )\partial _{i}\varphi \partial _{j}\varphi \partial
_{ij}^{2}\varphi ,\qquad i,j=1,2.  \notag
\end{align}%
From this identity, thanks to (H3), we obtain
\begin{align}
& c_{0}\Vert \partial _{ij}^{2}\varphi \Vert ^{2}\leq c\Vert \partial
_{ij}^{2}\mu \Vert ^{2}  \label{est21t} \\
& +c\big(\Vert \nabla a\Vert _{L^{\infty }}^{2}+Q(R)\big)\Vert
\nabla \varphi \Vert ^{2}+Q(R)\Vert \partial _{ij}^{2}a\Vert ^{2}
\notag \\
& +\Vert \partial _{i}(\partial _{j}J\ast \varphi )\Vert ^{2}+Q%
\left( R\right) \left\Vert \nabla \varphi \right\Vert _{L^{4}}^{4},  \notag
\end{align}%
and an estimate like this still holds if $\Vert \partial _{ij}^{2}\varphi
\Vert $ and $\Vert \partial _{ij}^{2}\mu \Vert $ are replaced by $\Vert
\varphi \Vert _{H^{2}}$ and $\Vert \mu \Vert _{H^{2}}$, respectively.
Thus, recalling (\ref{est25bis}), (\ref{est28bb}),  and using the fact that
$J\in W^{2,1}(B_{\delta })$ or $J$ is admissible, from (\ref{est21t}) we easily get
\begin{equation}
\varphi \in L^{2}\left( 0,T;H^{2}\left( \Omega \right) \right).
\label{est21b}
\end{equation}%
%since either $J\in W^{2,1}(B_{\delta })$ or $J$ is admissible.

\emph{Step 2}. We now establish the $L^{\infty }(0,T;V_{div})\cap
L^{2}\left(0,T;H^{2}\left( \Omega \right)^2\right)$-regularity for $u$. To
this end, let us test the Navier-Stokes equations by $u_{t}$ in $G_{div}$
to deduce the identity
\begin{equation}
\Vert u_{t}\Vert ^{2}+2\int_{\Omega }\nu (\varphi )\left( Du:Du_{t}\right)
dx+b(u,u,u_{t})=(l,u_{t}),  \label{est14}
\end{equation}%
where the function $l$ is given by
\begin{equation*}
l:=-\frac{\varphi ^{2}}{2}\nabla a-(J\ast \varphi )\nabla \varphi +h.
\end{equation*}%
Notice that, due to the assumption on the external force $h$ and to the
regularity of $\varphi $, we have $%
l\in L^{2}(0,T;L^2(\Omega)^2)$. From \eqref{est14} we obtain
\begin{equation}
\frac{1}{2}\Vert u_{t}\Vert ^{2}+\frac{d}{dt}\int_{\Omega }\nu (\varphi
)|Du|^{2}+b(u,u,u_{t})\leq \frac{1}{2}\Vert l\Vert ^{2}+\int_{\Omega
}|Du|^{2}\nu ^{\prime }(\varphi )\varphi _{t}.  \label{est15}
\end{equation}%
Observe that
\begin{align}
& \Big|\int_{\Omega }|Du|^{2}\nu ^{\prime }(\varphi )\varphi _{t}\Big|\leq
\Vert \nu ^{\prime }(\varphi )\Vert _{L^{\infty }}\Vert \varphi _{t}\Vert
\Vert Du\Vert _{L^{4}}^{2}  \notag \\
& \leq Q(R)\Vert \varphi _{t}\Vert \Vert Du\Vert \Vert u\Vert
_{H^{2}}  \notag \\
& \leq \delta \Vert u\Vert _{H^{2}}^{2}+Q_{\delta }(R)\Vert Du\Vert
^{2}\Vert \varphi _{t}\Vert ^{2}.  \label{est16}
\end{align}%
Furthermore, we have
\begin{align}
& |b(u,u,u_{t})|\leq \frac{1}{4}\Vert u_{t}\Vert ^{2}+\Vert u\cdot \nabla
u\Vert ^{2}  \notag \\
& \leq \frac{1}{4}\Vert u_{t}\Vert ^{2}+2\Vert u\Vert _{L^{4}}^{2}\Vert
\nabla u\Vert _{L^{4}}^{2}  \notag \\
& \leq \frac{1}{4}\Vert u_{t}\Vert ^{2}+c\Vert u\Vert \Vert \nabla u\Vert
\Vert \nabla u\Vert \Vert u\Vert _{H^{2}}  \notag \\
& \leq \frac{1}{4}\Vert u_{t}\Vert ^{2}+\delta \Vert u\Vert
_{H^{2}}^{2}+c_{\delta }\big(\Vert u\Vert ^{2}\Vert \nabla u\Vert ^{2}\big)%
\Vert \nabla u\Vert ^{2}.  \label{est17}
\end{align}%
Plugging \eqref{est16} and \eqref{est17} into \eqref{est15}, we get
\begin{align}
& \frac{1}{4}\Vert u_{t}\Vert ^{2}+\frac{d}{dt}\int_{\Omega }\nu (\varphi
)|Du|^{2}  \notag \\
& \leq \frac{1}{2}\Vert l\Vert ^{2}+2\delta \Vert u\Vert
_{H^{2}}^{2}+c_{\delta }\big(\Vert u\Vert ^{2}\Vert \nabla u\Vert ^{2}\big)%
\Vert Du\Vert ^{2}  \notag \\
& +Q_{\delta }(R)\Vert Du\Vert ^{2}\Vert \varphi _{t}\Vert ^{2},
\label{est31}
\end{align}%
for any $\delta >0$ that will be fixed later.

It remains to absorb the term $2\delta \Vert u\Vert _{H^{2}}^{2}$ into
the left-hand side of inequality (\ref{est31}). This can be done essentially
by controlling it with $2\delta \left\Vert u_{t}\right\Vert ^{2}$
plus some lower-order (bounded) perturbation. To achieve this we first
rewrite the Navier-Stokes equations as an inhomogeneous elliptic system in divergence form,
namely,
\begin{equation}
\left\{
\begin{array}{ll}
-\text{div}(2\nu (\varphi )Du)+\nabla \pi =\widetilde{h}, & \text{ in }\Omega
\times \left( 0,T\right) , \\
\text{div}\left( u\right) =0\text{,} & \text{ in }\Omega \times \left(
0,T\right) , \\
u=0\text{,} & \text{ on }\partial \Omega \times \left( 0,T\right) ,%
\end{array}%
\right.   \label{St-ee1}
\end{equation}%
where%
\begin{equation}
\label{tildeh}
\widetilde{h}:=\mu \nabla \varphi +h(t)- (u\cdot \nabla )u-u_{t}.
\end{equation}%
Since $\varphi $ is bounded on $\Omega \times \left( 0,T\right) $ (and
therefore, $\nu \left( \varphi \right) $ is bounded by (H3)), by the
application of Lax-Milgram lemma, we can infer that every solution $\left[
u,\pi \right] \in V_{div}\times L^{2}\left( \Omega \right)$ to
(\ref{St-ee1}) such that $\overline{\pi }=0$  satisfies the bound
\begin{equation}
\left\Vert Du\right\Vert +\left\Vert \pi \right\Vert \leq C||\widetilde{h}%
||_{V^{\prime }},  \label{est18b}
\end{equation}%
for some $C>0$ which depends on $\Omega $ and $R>0$ only. On the other hand,
we can also rewrite (\ref{St-ee1}) as an inhomogeneous elliptic system in
non-divergence form, that is,
\begin{equation}
\left\{
\begin{array}{ll}
-\nu (\varphi )\Delta u+\nabla \pi =\widehat{h}, & \text{ in }\Omega \times
\left( 0,T\right) , \\
\text{div}\left( u\right) =0\text{,} & \text{ in }\Omega \times \left(
0,T\right) , \\
u=0\text{,} & \text{ on }\partial \Omega \times \left( 0,T\right) ,%
\end{array}%
\right.   \label{St-ee2}
\end{equation}%
where
\begin{equation*}
\widehat{h}:=\widetilde{h}+2\nu^{\prime }\left( \varphi \right) \nabla
\varphi \cdot Du.
\end{equation*}%
We can then apply Proposition \ref{Stok-non} to (\ref{St-ee2}) since
$\nu \left( \varphi \right) \in C^{\delta /2,\delta }\left( %
\left[ 0,T\right] \times \overline{\Omega }\right) $. Thus we obtain the bound
(cf. also (\ref{est18b}))
\begin{align}
\left\Vert u\right\Vert _{H^{2}}+\left\Vert \pi \right\Vert _{H^{1}}& \leq
C\left( ||\widehat{h}||+\left\Vert \pi \right\Vert \right) \leq C\left( ||%
\widehat{h}||+||\widetilde{h}||_{V^{^{\prime }}}\right)   \label{St-ee3} \\
& \leq C\left( ||\widetilde{h}||+\left\Vert \nabla \varphi \cdot
Du\right\Vert \right) ,  \notag
\end{align}%
where  $C=C\left( \nu _{1},\nu _{2},R,T,\Omega \right) >0$.
Recalling \eqref{tildeh}, we deduce
\begin{align}
\left\Vert u\right\Vert _{H^{2}}& \leq C\left\Vert u_{t}\right\Vert +C\left(
\left\Vert h\right\Vert +\left\Vert u\cdot \nabla u\right\Vert +\left\Vert
\mu \nabla \varphi \right\Vert +\left\Vert \nabla \varphi \cdot
Du\right\Vert \right)   \label{eest-18t} \\
& \leq C\left\Vert u_{t}\right\Vert +C\left( \left\Vert h\right\Vert
+\left\Vert u\right\Vert _{L^{4}}\left\Vert \nabla u\right\Vert
_{L^{4}}+\left\Vert \mu \right\Vert _{L^{\infty }}\left\Vert \nabla \varphi
\right\Vert +\left\Vert \nabla \varphi \right\Vert _{L^{4}}\left\Vert
Du\right\Vert _{L^{4}}\right)   \notag \\
& \leq C\left\Vert u_{t}\right\Vert +C\left( \left\Vert h\right\Vert
+\left\Vert u\right\Vert ^{1/2}\left\Vert Du\right\Vert \left\Vert
u\right\Vert _{H^{2}}^{1/2}\right)   \notag \\
& +C\left( \left\Vert \mu \right\Vert _{L^{\infty }}\left\Vert \nabla
\varphi \right\Vert +\left\Vert \nabla \varphi \right\Vert
_{L^{4}}\left\Vert Du\right\Vert ^{1/2}\left\Vert u\right\Vert
_{H^{2}}^{1/2}\right)   \notag \\
& \leq C\left\Vert u_{t}\right\Vert +C_{\epsilon }\left( \left\Vert
h\right\Vert +\left( \left\Vert u\right\Vert \left\Vert \nabla u\right\Vert
\right) \left\Vert Du\right\Vert \right)   \notag \\
& +C_{\epsilon }\left( \left\Vert \mu \right\Vert _{L^{\infty }}\left\Vert
\nabla \varphi \right\Vert +\left\Vert \nabla \varphi \right\Vert
_{L^{4}}^{2}\right) \left\Vert Du\right\Vert +2\epsilon \left\Vert
u\right\Vert _{H^{2}},  \notag
\end{align}%
for any $\epsilon >0$. Thus, for $\epsilon \in(0,\frac{1}{2})$ we can absorb the small
term on the left-hand side and infer
\begin{align}
\left\Vert u\right\Vert _{H^{2}}^{2}& \leq C\left\Vert u_{t}\right\Vert
^{2}+C\left( \left\Vert h\right\Vert ^{2}+\left\Vert \mu \right\Vert
_{L^{\infty }}^{2}\left\Vert \nabla \varphi \right\Vert ^{2}\right)
\label{est19} \\
& +C\left( \left\Vert u\right\Vert ^{2}\left\Vert \nabla u\right\Vert
^{2}+\left\Vert \nabla \varphi \right\Vert _{L^{4}}^{4}\right) \left\Vert
Du\right\Vert ^{2}.  \notag
\end{align}%
We can now insert the bound (\ref{est19}) into (\ref{est31}), take $\delta >0
$ small enough and obtain the differential inequality
\begin{align}
& \frac{d}{dt}\int_{\Omega }\nu (\varphi )|Du|^{2}+\frac{1}{8}\Vert
u_{t}\Vert ^{2}  \label{est32b} \\
& \leq C\left( \Vert l\Vert ^{2}+\left\Vert h\right\Vert ^{2}+\left\Vert \mu
\right\Vert _{L^{\infty }}^{2}\left\Vert \nabla \varphi \right\Vert
^{2}\right)   \notag \\
& +C\left( R\right) (\Vert u\Vert ^{2}\Vert \nabla u\Vert ^{2}+\left\Vert
\nabla \varphi \right\Vert _{L^{4}}^{4}+\left\Vert \varphi _{t}\right\Vert
^{2})\Vert Du\Vert ^{2}.  \notag
\end{align}%
From \eqref{est32b}, on account of (H2) and of the improved regularity for
{\color{black}$\left[ \varphi ,\mu \right] $} given by (\ref{est25bis}) and (\ref{est28bb}),
by means of Gronwall's inequality, we obtain
\begin{equation}
u\in L^{\infty }(0,T;V_{div})\cap L^{2}(0,T;H^{2}(\Omega )^{2}),\text{ }%
u_{t}\in L^{2}(0,T;G_{div}).  \label{est34}
\end{equation}%
Moreover, owing to (\ref{St-ee3}), we have $\pi \in L^{2}\left(
0,T;H^{1}\left( \Omega \right) \right) $. With these regularity properties
for $u$ at disposal we can now argue exactly as in the second step of the
proof of \cite[Theorem 2]{FGK} by differentiating \eqref{sy1} with respect
to time, multiplying the resulting identity by $\mu _{t}$ in $H$ and using
the assumptions $F\in C^{3}(\mathbb{R})$ and $\varphi _{0}\in
H^{2}(\Omega )$ (this last assumption
ensures that $\varphi_t(0)\in H$, see Lemma \ref{admiss})
to deduce
\begin{equation}
\varphi _{t}\in L^{\infty }(0,T;H)\cap L^{2}(0,T,V).  \label{est27bb}
\end{equation}%
Furthermore, using \eqref{sy1}, we find
\begin{align}
& \Vert \nabla \mu \Vert _{L^{p}}\leq c\Vert \nabla \mu \Vert ^{2/p}\Vert
\nabla \mu \Vert _{H^{1}}^{1-2/p}  \label{est28b} \\
& \leq c\Vert \nabla \mu \Vert ^{2/p}\Vert \mu \Vert _{H^{2}}^{1-2/p}  \notag
\\
& \leq c\Vert \nabla \mu \Vert ^{2/p}(\Vert \Delta \mu \Vert ^{1-2/p}+\Vert
\mu \Vert ^{1-2/p})  \notag \\
& \leq Q(R,\Vert \varphi _{0}\Vert _{V},\Vert u_{0}\Vert )\big(%
\Vert \varphi _{t}\Vert ^{1-2/p}+\Vert u\cdot \nabla \varphi \Vert ^{1-2/p}+1%
\big)  \notag \\
& \leq Q(R,\Vert \varphi _{0}\Vert _{V},\Vert u_{0}\Vert )\big(%
\Vert \varphi _{t}\Vert ^{1-2/p}+\Vert u\Vert _{L^{q}}^{1-2/p}\Vert \nabla
\varphi \Vert _{L^{p}}^{1-2/p}+1\big).  \notag
\end{align}%
Here we have used the fact that the $H^{2}-$norm of $\mu $ is equivalent to
the $L^{2}-$ norm of $\left( B_{N}+I\right) \mu $ (cf. \eqref{sy5}) and we
have taken into account the improved regularity for $\mu $ given by the
third of \eqref{est25bis}. By combining \eqref{est25bis} with \eqref{est28b}
we therefore get
\begin{align}
& \Vert \nabla \varphi \Vert _{L^{p}}\leq Q(R,\Vert \varphi
_{0}\Vert _{V},\Vert u_{0}\Vert )\big(\Vert \varphi _{t}\Vert ^{1-2/p}+\Vert
u\Vert _{L^{2p/(p-2)}}^{(p-2)/2}+1\big)  \label{est27} \\
& \leq Q(R,\Vert \varphi _{0}\Vert _{V},\Vert u_{0}\Vert )\big(%
\Vert \varphi _{t}\Vert ^{1-2/p}+\Vert u\Vert ^{(p-2)^{2}/2p}\Vert \nabla
u\Vert ^{1-2/p}+1\big).  \notag
\end{align}%
Thanks to this property, on account  of \eqref{est34}$_1$, (\ref{est27bb}) and
(\ref{est28b})-(\ref{est27}), we have
\begin{equation}
\varphi \in L^{\infty }(0,T;W^{1,p}(\Omega )).  \label{est29}
\end{equation}%
Finally, by comparison in \eqref{sy1} (cf. \cite{FGK}) we also get
$\mu \in L^{\infty }(0,T;H^{2}(\Omega))$.
This fact, thanks to \eqref{est21t} and using once more the regularity
assumption on $J$, implies
\begin{equation}
\varphi \in L^{\infty }\left( 0,T;H^{2}(\Omega )\right) .  \label{est29b}
\end{equation}%
\emph{Step 3}. We shall briefly explain the details of the approximation
schemes which can be used to derive the estimates in Steps 1 and 2. Regarding
estimates (\ref{est25bis}), (\ref{est28bb}), (\ref{est21b}),
it suffices to employ the usual
Faedo-Galerkin truncation method as in \cite[Theorem 1]{CFG} since $u\in
L^{\infty }\left( 0,T;G_{div}\right) \cap L^{2}\left( 0,T;V_{div}\right) $
is a weak solution to (\ref{sy3})-(\ref{sy4}). Weak solutions are also enough
to deduce \eqref{holder-est}.
To deduce the higher-order estimate for $u\in L^{\infty }\left(
0,T;V_{div}\right) $ in Step 2, we can no longer exploit the usual Galerkin
scheme in a standard fashion but we need to rely on a different scheme.
We first mollify the Navier-Stokes equation in the
following fashion: recall that $\varphi \in L^{\infty }\left( 0,T;V\right)
\cap L^{2}\left( 0,T;H^{2}\left( \Omega \right) \right) $ is such that $\varphi
\in C^{\delta /2}\left( \left[ 0,T\right] ;C^{\delta }\left( \overline{%
\Omega }\right) \right) $ as provided by the Step 1 and that $\partial
\Omega$ is of class $\mathcal{C}^{2}$. Let $\widetilde{\varphi }%
=E\varphi ,$ where $E:W^{2,p}\left( \Omega \right) \rightarrow W^{2,p}\left(
\mathbb{R}^{2}\right) $ is an extension operator for any $p\in \lbrack
1,\infty )$. Then set $\widetilde{\varphi }%
_{\varepsilon }=\eta _{\varepsilon }\ast \widetilde{\varphi }$ where
$\eta _{\varepsilon }\in C^{\infty }\left( \mathbb{%
R}^{2}\right) $ is the usual Friedrich mollifier such that
$\eta _{\varepsilon }\geq 0$ and $\int_{\mathbb{R}%
^{2}}\eta _{\varepsilon }dx=1$. Defining $\varphi _{\epsilon }=R\widetilde{%
\varphi }_{\varepsilon }$, where $R:W^{2,p}\left( \mathbb{R}^{2}\right)
\rightarrow W^{2,p}\left( \Omega \right) $ is the restriction
operator, it is clear that $\widetilde{\varphi }_{\varepsilon }\left(
x,\cdot \right) $ is of class $C^{\infty }$ in a neighborhood of $\overline{%
\Omega }$. Moreover $\varphi _{\epsilon }$ satisfies,
for any $k\in \left\{ 0,1,2\right\} $ and $p\in \lbrack 1,\infty )$, the bounds
\begin{equation*}
\left\Vert \varphi _{\varepsilon }\left( t\right) \right\Vert _{W^{k,p}}\leq
\left\Vert \varphi \left( t\right) \right\Vert _{W^{k,p}},\left\Vert \varphi
_{\varepsilon }\left( t\right) \right\Vert _{W^{k+1,p}}\leq
C_{k,p,\varepsilon }\left\Vert \varphi \left( t\right) \right\Vert _{W^{k,p}}
\end{equation*}%
and $\varphi _{\varepsilon }\left( t\right) \rightarrow \varphi \left(
t\right) $ strongly in $W^{k,p}\left( \Omega \right) $ for almost any $t\in
\left( 0,T\right)$ (see, e.g., \cite[Chapter V]{EE}).
We also have
\begin{equation}
\varphi _{\varepsilon }\in L^{\infty }\left( 0,T;H^{2}\left( \Omega \right)
\right) \cap C^{\delta /2}\left( \left[ 0,T\right] ;C^{\delta }\left(
\overline{\Omega }\right) \right).  \label{reg-order}
\end{equation}
We now consider the following
mollified version of the original Navier-Stokes equations
\begin{align}
&u_{t}-2\mbox{div}(\nu (\varphi _{\varepsilon })Du)+(u\cdot \nabla )u+\nabla
\pi =\mu \nabla \varphi +h(t),  \label{molliNS}\\
&\mbox{div}\left( u_{\varepsilon }\right) =0 \label{freediv}
\end{align}
in $\Omega\times(0,T)$ with initial condition $u_{\varepsilon \mid t=0}=u_{0}$ and no-slip
boundary condition. Here $\mu$ and $\varphi$ are as regular as specified in Step 1. Let us observe that
(\ref{reg-order}) together with standard interpolation results in Sobolev spaces
imply that $\varphi _{\varepsilon }\in BUC\left( \left[ 0,T\right]
;W^{1,q}\left( \Omega \right) \right) $ for any $q>2$ (i.e., $\varphi
_{\varepsilon }$ is bounded and uniformly continuous with values in $W^{1,q}(\Omega)$
with $\left\Vert \varphi _{\varepsilon }\right\Vert _{BUC(0,T;W^{1,q})}\leq
C_{\varepsilon },$ for some $C_{\varepsilon }\rightarrow \infty $ as $%
\varepsilon \rightarrow 0^{+}$). Thus, thanks to a result contained in the proof of
\cite[Theorem 8]{A1}, we can find a sufficiently small time $T_{\varepsilon }\leq T$,
a function $u_\varepsilon$ such that
\begin{equation}
u_{\varepsilon }\in H^{1}\left( 0,T_{\varepsilon };G_{div}\right) \cap
L^{2}(0,T_{\varepsilon };H^{2}\left( \Omega \right) ^{2})\cap L^{\infty
}\left( 0,T_{\varepsilon };V_{div}\right)   \label{reg-vel}
\end{equation}%
and the associated pressure $\pi _{\varepsilon }\in L^{2}\left(
0,T_{\varepsilon };H^{1}\left( \Omega \right) /\mathbb{R}\right) $ such that $u_\varepsilon$ is a
strong solution to (\ref{molliNS})-\eqref{freediv}, provided that $u_{0}\in V_{div}$ and $%
h\in L_{\text{loc}}^{2}(\mathbb{R}_{+};G_{div})$ and $\mu \nabla \varphi \in
L_{\text{loc}}^{2}(\mathbb{R}_{+};L^{2}\left( \Omega \right) ^{2})$ (for the
latter see Step 1). The regularity (\ref{reg-vel}) is enough to perform
all the estimates of Step 2 on the fluid velocity rigorously. In particular,
estimates \eqref{est19}-\eqref{est32b} entail that $u_{\varepsilon }$ can be extended to any interval $%
\left( 0,T\right) $, for any given $T>0$. Moreover, $u_\varepsilon$ is bounded in the spaces (\ref{reg-vel})
uniformly with respect to $\varepsilon$ (and $\pi _{\varepsilon }$ is bounded in $L^{2}\left( 0,T;H^{1}\left( \Omega
\right) \right) $ uniformly with respect to $\varepsilon$). Thus, usual
compactness arguments allows to pass to the limit as $\varepsilon \rightarrow
0$ in (\ref{molliNS})-\eqref{freediv}, owing to the strong convergence $\varphi _{\varepsilon
}\left( t\right) \rightarrow \varphi \left( t\right) $ in $V$ for almost any
$t\in \left( 0,T\right)$. This gives a strong solution $\tilde u$ to the same problem solved by
the weak solution found in Step 1. Then uniqueness applied to the NS equations with given viscosity
implies that $u=\tilde u$. We can now perform estimates (\ref{est27bb})-(\ref{est27})
to show that $\varphi$ satisfies (\ref{est29}) and (\ref{est29b}).
This ends the proof.
\end{proof}
}

{\color{black}
\begin{oss}{\upshape
Assuming that
 $u_{0}\in V_{div}$, $\varphi_0\in V\cap C^\beta(\overline{\Omega})$, for
some $\beta>0$,
and $h\in L_{loc}^{2}(\mathbb{R}^{+};G_{div})$, we can see that the strong
solution $[u,\varphi]$ of Theorem \ref{thmncvisc} also satisfies the following
strong time continuity properties
\begin{align}
& u\in C([0,T];V_{div}),\qquad \varphi, \mu\in C([0,T];V).\label{str-time-cont}
\end{align}
Indeed, in order to prove \eqref{str-time-cont}$_1$ we first observe that,
as a consequence of \eqref{regps1}, we have $u\in C_w([0,T];V_{div})$.
We recall that $C_w([0,T];X)$ stands for the space of weakly continuous
functions from $[0,T]$ with values in a Banach space $X$.
%, namely $v\in C_w([0,T];V_{div})$ if and only if $\langle x',v(\cdot)\rangle\in C([0,T])$
%for all $x'\in X'$.
Moreover, multiplying \eqref{sy3} by
$-\Delta u$ in $L^2(\Omega)^2$ we get
\begin{align}
\frac{1}{2}\frac{d}{dt}\Vert\nabla u\Vert^2&=-\big(\nu(\varphi)\Delta u,\Delta u\big)
-2\big(\nu'(\varphi)\nabla\varphi\cdot Du,\Delta u\big)+\big((u\cdot\nabla)u,\Delta u\big)
\nonumber\\
&+(\nabla\pi,\Delta u)-(\mu\nabla\varphi,\Delta u)-(h,\Delta u).\nonumber
\end{align}
On account of \eqref{regps1} and \eqref{regps1bis}, we can see that all the terms
on the right-hand side of this differential identity belong to $L^1(0,T)$.
Indeed, recall that $\pi\in L^2(0,T;H^1(\Omega))$ and observe that
\begin{align}
&2\big|\big(\nu'(\varphi)\nabla\varphi\cdot Du,\Delta u\big)\big|\leq C_\nu(R)\Vert\nabla\varphi\Vert_{L^4}\Vert Du\Vert_{L^4}\Vert\Delta u\Vert
\nonumber\\
&\leq C_\nu(R)\Vert\nabla\varphi\Vert^{1/2}\Vert\varphi\Vert_{H^2}^{1/2}\Vert Du\Vert^{1/2}\Vert u\Vert_{H^2}^{1/2}\Vert\Delta u\Vert,
\nonumber
\end{align}
where $R>$ is such that $\Vert \varphi \Vert _{L^{\infty
}(\Omega \times (0,T))}\leq R$.
We therefore deduce the absolute continuity of $\Vert\nabla u(\cdot)\Vert^2$ on $[0,T]$.
This, together with the weak continuity of $u$ in $V_{div}$, yields \eqref{str-time-cont}$_1$.
As far as \eqref{str-time-cont}$_2$ is concerned, from
the differential identity \eqref{diffid1}, recalling \eqref{regps1} and \eqref{regps1bis},
we infer the absolute continuity of $\Vert\nabla\mu(\cdot)\Vert^2$ on $[0,T]$.
Since we also have $\mu\in C_w([0,T];V)$, then we immediately get
$\mu\in C([0,T];V)$. Consider now the identity
$\nabla\varphi=b^{-1}\nabla\mu+b^{-1}(\nabla J\ast\varphi-\varphi\nabla a)$,
where $b=a(x)+F''(\varphi)$. Observe that $b^{-1}\in C([0,T];C(\overline{\Omega}))$
owing to $\varphi \in C^{\delta /2,\delta}\left( \left[ 0,T\right] \times \overline{\Omega }\right)$.
Thus we conclude that $\varphi\in C([0,T];V)$.
If, in addition, $F\in C^3(\mathbb{R})$ and
$\varphi_0\in H^2(\Omega)$ then, by arguing exactly as in \cite[Remark 5]{FGK},
we can also prove the following properties
\begin{align}
&\varphi,\mu\in C([0,T];H^2(\Omega)),\qquad \varphi_t,\mu_t\in C([0,T];H).\nonumber
\end{align}
}
\end{oss}
}
{\color{black}
\begin{oss}{\upshape
We point out that, if the assumptions
$u_{0}\in V_{div}$, $\varphi_0\in V\cap C^\beta(\overline{\Omega})$, for
some $\beta>0$,
%$\varphi _{0}\in W^{1,p}{\color{black}(\Omega)},$
%with $p>2$
 and $h\in L_{loc}^{2}(\mathbb{R}^{+};G_{div})$ hold, the additional requirements
 on $J$ (cf. Theorem \ref{thmncvisc}) are not needed to prove the following regularity properties
\begin{align}
& u\in L^{\infty }(0,T;V_{div})\cap L^{2}(0,T;H^{2}(\Omega )^{2}), \quad
u_{t}\in L^{2}(0,T;G_{div}) \nonumber \\
& %\varphi \in L^{\infty }(0,T;W^{1,p}{\color{black}(\Omega)})
\varphi,\mu\in L^\infty(0,T;V)\cap L^{\infty
}(\Omega \times (0,T)),\quad\mu\in L^2(0,T;H^2(\Omega))\nonumber\\
&%\text{ }
%\mu \in L^{\infty }(0,T;V),\text{ }
\varphi_{t},\mu_t\in L^{2}(0,T;H),  \nonumber
\end{align}%
which imply, thanks to \eqref{est38},
\begin{align}
&\varphi\in L^2(0,T;W^{1,p}(\Omega))\cap L^q(0,T;W^{1,2q/(q-2)}(\Omega)),\nonumber
\end{align}
for every $2<p<\infty$ and every $2<q\leq\infty$.
The extra assumption on $J$ is needed only to prove that $\varphi\in L^2(0,T;H^2(\Omega))$
and, provided that $F\in C^3$ and $\varphi_0\in H^2(\Omega)$ hold as well, to
deduce the additional regularity properties \eqref{regps3} and \eqref{regps4}.
%that $\varphi,\mu\in L^{\infty }(0,T;H^{2}(\Omega ))$,
%$\varphi _{t},\mu_t}\in L^{\infty }(0,T;H)\cap L^{2}(0,T;V).  \label{regps4}
}
\end{oss}
}

We can now state the weak-strong uniqueness result for the nonconstant
viscosity case.

{\color{black} }

\begin{thm}
\label{weak-uniq} Let $d=2$ and assume that (H1)--(H5) are satisfied. Let $%
u_{0}\in G_{div}$, $\varphi _{0}\in L^{\infty }(\Omega )$ and let $%
[u_{1},\varphi _{1}]$ be a weak solution and $[u_{2},\varphi _{2}]$ a strong
solution satisfying {\color{black}\eqref{regps1} and
\eqref{regps1bis}} both corresponding to $[u_{0},\varphi _{0}]$ and to the same
external force $h\in L^2(0,T;V_{div}^{\prime})$.
%The existence of a strong solution
%is ensured by Theorem \ref{thmncvisc} if, in addition, $u_{0}\in V_{div}$, $%
%\varphi _{0}\in H^{2}(\Omega )$, $F\in C^{3}(\mathbb{R})$ and either $J\in
%W^{2,1}(B_{\delta })$ or $J$ admissible.
Then $u_{1}=u_{2}$ and $\varphi _{1}=\varphi _{2}$.
\end{thm}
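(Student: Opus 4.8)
\emph{Setup.} The plan is to run the energy scheme of Theorem~\ref{uniqthm}, but with one structural change forced by the variable viscosity. Put $u:=u_1-u_2$, $\varphi:=\varphi_1-\varphi_2$; since the two solutions share the datum and the force, we have $u(0)=0$, $\varphi(0)=0$, the force difference vanishes, and, crucially, $\overline{\varphi}=0$, so $\varphi\in V_0$ for a.e.\ $t$. I would first record that both order parameters are bounded, $\varphi_1,\varphi_2\in L^\infty(\Omega\times(0,T))$ (weak solution: \cite[Lemma~2.10]{GG4}; strong solution: \eqref{regps1bis}). Because $\nu$ is locally Lipschitz (H2) and $F\in C^{2,1}_{loc}$ (so $F''$ is locally Lipschitz), on the common range of the $\varphi_i$ we get the pointwise bounds $|\nu(\varphi_1)-\nu(\varphi_2)|\le L|\varphi|$ and $|F''(\varphi_1)-F''(\varphi_2)|\le L'|\varphi|$, which are the workhorses of the whole argument.

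\emph{The Cahn--Hilliard difference.} The decisive departure from Theorem~\ref{uniqthm} is that I would test the Cahn--Hilliard difference by $\varphi$ in $H$ (rather than by $B_N^{-1}\varphi$). Since $\overline{\varphi}=0$ and $F''(\varphi_1)+a\ge c_0$, assumption (H3) then yields the gradient dissipation $\tfrac12\tfrac{d}{dt}\|\varphi\|^2+c_0\|\nabla\varphi\|^2\le\cdots$; this $\|\nabla\varphi\|^2$ is exactly what the viscosity term will consume, and is why the weaker $V'$-energy of the constant-viscosity proof no longer suffices. The transport contribution is rewritten, by $\mathrm{div}\,u=0$, as $(u\cdot\nabla\varphi_1,\varphi)=-(u\cdot\nabla\varphi,\varphi_1)$ and bounded via $\varphi_1\in L^\infty$ by $\varepsilon\|\nabla\varphi\|^2+C\|u\|^2$, while $(u_2\cdot\nabla\varphi,\varphi)=0$. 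The only delicate chemical-potential term is $\int(F''(\varphi_1)-F''(\varphi_2))\nabla\varphi_2\cdot\nabla\varphi$, carrying the strong-solution gradient $\nabla\varphi_2$.

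\emph{The Navier--Stokes difference and the crux.} Testing the momentum difference by $u$, the dissipation splits as $2\int\nu(\varphi_1)|Du|^2+2\int(\nu(\varphi_1)-\nu(\varphi_2))Du_2:Du$; the first piece is $\ge\nu_1\|\nabla u\|^2$, and the convective and Korteweg terms are treated verbatim as $I_1$--$I_3$ and the $b$-estimate of Theorem~\ref{uniqthm}, producing $\varepsilon\|\nabla u\|^2$, terms $\lesssim\|\varphi\|^2$ carried to the right-hand side, and an $L^1(0,T)$ coefficient times $\|u\|^2$. The genuinely new objects are the two cross terms $2\int(\nu(\varphi_1)-\nu(\varphi_2))Du_2:Du$ and $\int(F''(\varphi_1)-F''(\varphi_2))\nabla\varphi_2\cdot\nabla\varphi$, which are precisely what blocks weak--weak uniqueness. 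I would estimate both identically: bound the coefficient by $L|\varphi|$ (resp.\ $L'|\varphi|$), apply H\"older in $L^4$--$L^4$--$L^2$, and then use the two-dimensional Ladyzhenskaya inequality $\|\varphi\|_{L^4}^2\le c\|\varphi\|\,\|\nabla\varphi\|$ (legitimate as $\overline{\varphi}=0$), followed by Young. This gives
\[
2\Big|\int(\nu(\varphi_1)-\nu(\varphi_2))Du_2:Du\Big|\le\varepsilon\|\nabla u\|^2+\varepsilon\|\nabla\varphi\|^2+C\,\|Du_2\|_{L^4}^4\,\|\varphi\|^2,
\]
and the same bound for the chemical term with $\|\nabla\varphi_2\|_{L^4}^4$ replacing $\|Du_2\|_{L^4}^4$. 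The point is that the strong regularity $u_2\in L^\infty(0,T;V_{div})\cap L^2(0,T;H^2)$ and $\varphi_2\in L^\infty(0,T;V)\cap L^2(0,T;H^2)$ forces $\|Du_2\|_{L^4}^4\le c\|\nabla u_2\|^2\|u_2\|_{H^2}^2$ and $\|\nabla\varphi_2\|_{L^4}^4\le c\|\nabla\varphi_2\|^2\|\varphi_2\|_{H^2}^2$ to lie in $L^1(0,T)$, so both serve as admissible Gronwall weights multiplying the energy $\|\varphi\|^2$.

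\emph{Conclusion.} Adding the two identities and absorbing every $\varepsilon$-term into the dissipation $\nu_1\|\nabla u\|^2+c_0\|\nabla\varphi\|^2$, I would reach
\[
\frac{d}{dt}\big(\|u\|^2+\|\varphi\|^2\big)+\nu_1\|\nabla u\|^2+c_0\|\nabla\varphi\|^2\le g(t)\big(\|u\|^2+\|\varphi\|^2\big),\qquad g\in L^1(0,T).
\]
Since $u(0)=0$, $\varphi(0)=0$, Gronwall's lemma gives $u\equiv0$, $\varphi\equiv0$, i.e.\ $u_1=u_2$ and $\varphi_1=\varphi_2$. The main obstacle, and the heart of the matter, is the cross viscosity term: it cannot be controlled with the mere $V'$-dissipation of the constant-viscosity proof, nor if $u_2,\varphi_2$ are only weakly regular. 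It is exactly the combination of the $H$-test for $\varphi$ (supplying $\|\nabla\varphi\|^2$), the uniform bound on $\varphi_1,\varphi_2$, and the $L^2(0,T;L^4)$ gradient control inherited from the \emph{strong} solution that makes the estimate close.
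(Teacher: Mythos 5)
Your proposal is correct and follows essentially the same route as the paper's proof: test the Navier--Stokes difference by $u$ and the Cahn--Hilliard difference by $\varphi$ in $H$ (abandoning the $V'$-energy of the constant-viscosity case precisely to gain the $\|\nabla\varphi\|^2$ dissipation), identify the two cross terms $2\big((\nu(\varphi_1)-\nu(\varphi_2))Du_2,Du\big)$ and $\big((F''(\varphi_1)-F''(\varphi_2))\nabla\varphi_2,\nabla\varphi\big)$ as the crux, and control them via the $L^\infty$ bounds on $\varphi_i$, the Ladyzhenskaya/Gagliardo--Nirenberg interpolation, and the $L^1(0,T)$ Gronwall weights $\|\nabla u_2\|^2\|u_2\|_{H^2}^2$ and $\|\nabla\varphi_2\|_{L^4}^4$ supplied by the strong regularity \eqref{regps1}--\eqref{regps1bis}. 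The only (immaterial) deviations are the sign convention and your handling of the convective term by integrating by parts onto $\varphi_1\in L^\infty$ instead of using $\|\nabla\varphi_2\|_{L^4}$ as in the paper's estimate of $I_4$.
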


\begin{proof}
Taking the difference between the variational formulation of {\color{black}%
\eqref{sy3} and \eqref{sy1}} written for each solution and setting {%
\color{black} $u:=u_{2}-u_{1}$, $\varphi :=\varphi _{2}-\varphi _{1}$,} we get
\begin{align}
& \langle u_{t},v\rangle +2\big((\nu (\varphi _{2})-\nu (\varphi
_{1}))Du_{2},Dv\big)+2\big(\nu (\varphi _{1})Du,Dv\big)%
+b(u_{2},u_{2},v)-b(u_{1},u_{1},v)  \notag \\
& =-\frac{1}{2}\big(\varphi (\varphi _{1}+\varphi _{2})\nabla a,v\big)-\big(%
(J\ast \varphi )\nabla \varphi _{2},v\big)-\big((J\ast \varphi _{1})\nabla
\varphi ,v\big),  \label{vv1} \\
& \langle \varphi _{t},\psi \rangle +(\nabla \mu ,\nabla \psi )=-(u\cdot
\nabla \varphi _{2},\psi ){\color{black}-}(u_{1}\cdot \nabla \varphi ,\psi ),
\label{vv2}
\end{align}%
for all $v\in V_{div}$ and $\psi \in V$, where {\color{black}$\mu =\mu
_{2}-\mu _{1}=a\varphi -J\ast \varphi +F^{\prime }(\varphi _{2})-F^{\prime
}(\varphi _{1})$.} Let us choose $v=u$ and $\psi =\varphi $ as test
functions in \eqref{vv1} and \eqref{vv2}, respectively, and {\color{black}add}
the resulting identities. Notice that the contribution from the second term
on the right-hand side of \eqref{vv2} vanishes due to the incompressibility
condition. Hence, we get
\begin{align}
& \frac{1}{2}\frac{d}{dt}\big(\Vert u\Vert ^{2}+\Vert \varphi \Vert ^{2}\big)%
+2\big((\nu (\varphi _{2})-\nu (\varphi _{1}))Du_{2},Du\big)+2\big(\nu
(\varphi _{1})Du,Du\big)+b(u,u_{1},u)  \notag \\
& +(\nabla \mu ,\nabla \varphi )=I_{1}+I_{2}+I_{3}+I_{4},  \label{vv3}
\end{align}%
where $I_{1},I_{2},I_{3}$ are given again by
\begin{equation*}
I_{1}=-\frac{1}{2}\big(\varphi (\varphi _{1}+\varphi _{2})\nabla a,u\big)%
,\quad I_{2}=-\big((J\ast \varphi )\nabla \varphi _{2},u\big),\quad I_{3}=-%
\big((J\ast \varphi _{1})\nabla \varphi ,u\big),
\end{equation*}%
while $I_{4}$ is given by
\begin{equation*}
I_{4}=-(u\cdot \nabla \varphi _{2},\varphi ).
\end{equation*}%
Let us first estimate the terms in \eqref{vv3} coming from the Navier-Stokes
equations. Due to assumption (H2) we have
\begin{align}
& 2\big|\big((\nu (\varphi _{2})-\nu (\varphi _{1}))Du_{2},Du\big)\big|\leq
C\Vert \varphi \Vert _{L^{4}}\Vert Du_{2}\Vert _{L^{4}}\Vert \nabla u\Vert
\notag \\
& \leq C\Vert \varphi \Vert ^{1/2}\Vert \varphi \Vert _{V}^{1/2}\Vert
Du_{2}\Vert ^{1/2}\Vert Du_{2}\Vert _{H^{1}}^{1/2}\Vert \nabla u\Vert  \notag
\\
& \leq \frac{\nu _{1}}{12}\Vert \nabla u\Vert ^{2}+C\Vert \nabla u_{2}\Vert
\Vert u_{2}\Vert _{H^{2}}\Vert \varphi \Vert ^{2}+C\Vert \nabla u_{2}\Vert
\Vert u_{2}\Vert _{H^{2}}\Vert \varphi \Vert \Vert \nabla \varphi \Vert
\notag \\
& \leq \frac{\nu _{1}}{12}\Vert \nabla u\Vert ^{2}+\frac{c_{0}}{4}\Vert
\nabla \varphi \Vert ^{2}+C(1+\Vert \nabla u_{2}\Vert ^{2}\Vert u_{2}\Vert
_{H^{2}}^{2})\Vert \varphi \Vert ^{2},  \label{k}
\end{align}%
\begin{equation*}
2\big(\nu (\varphi _{1})Du,Du\big)\geq \nu _{1}\Vert \nabla u\Vert ^{2},
\end{equation*}%
where henceforth in this proof $C$ will denote a constant which depends on $%
\Vert \varphi _{0}\Vert _{L^{\infty }}$,
% $\Vert\varphi_{02}\Vert_{L^\infty}$
and on $\Vert u_{0}\Vert $. % $\Vert u_{02}\Vert$
Indeed, recall that, since $\varphi _{0}\in L^{\infty }(\Omega )$, then we
have $\Vert \varphi _{i}\Vert _{L^{\infty }(\Omega \times (0,T))}\leq
C_{i}=C_{i}\big(\Vert \varphi _{0}\Vert _{L^{\infty }},\Vert u_{0}\Vert
\big)
$, for $i=1,2$.

The term in the trilinear form is standard
\begin{equation*}
|b(u,u_{1},u)|\leq c\Vert u\Vert \Vert \nabla u\Vert \Vert \nabla u_{1}\Vert
\leq \frac{\nu _{1}}{12}\Vert \nabla u\Vert ^{2}+c\Vert \nabla u_{1}\Vert
^{2}\Vert u\Vert ^{2},
\end{equation*}%
while the terms $I_{1},I_{2},I_{3}$ can now be estimated more easily in this
way
\begin{align*}
& I_{1}\leq \Vert \varphi \Vert \Vert \varphi _{1}+\varphi _{2}\Vert
_{L^{4}}\Vert \nabla a\Vert _{L^{\infty }}\Vert u\Vert _{L^{4}}  \notag \\
& \leq \frac{\nu _{1}}{12}\Vert \nabla u\Vert ^{2}+c\big(\Vert \varphi
_{1}\Vert _{L^{4}}^{2}+\Vert \varphi _{2}\Vert _{L^{4}}^{2}\big)\Vert
\varphi \Vert ^{2}, \\
& I_{2}\leq \Vert \varphi _{2}\Vert _{L^{4}}\Vert \nabla J\Vert
_{L^{1}}\Vert \varphi \Vert \Vert u\Vert _{L^{4}}  \notag \\
& \leq \frac{\nu _{1}}{12}\Vert \nabla u\Vert ^{2}+c\Vert \varphi _{2}\Vert
_{L^{4}}^{2}\Vert \varphi \Vert ^{2}, \\
& I_{3}\leq \frac{\nu _{1}}{12}\Vert \nabla u\Vert ^{2}+c\Vert \varphi
_{1}\Vert _{L^{4}}^{2}\Vert \varphi \Vert ^{2}.
\end{align*}%
Regarding the terms coming from the nonlocal Cahn-Hilliard equation we have
\begin{align}
& (\nabla \mu ,\nabla \varphi )=\big((a+F^{\prime \prime }(\varphi
_{1}))\nabla \varphi ,\nabla \varphi \big)+\big(\varphi \nabla a-\nabla
J\ast \varphi ,\nabla \varphi \big)  \notag \\
& +\big((F^{\prime \prime }(\varphi _{2})-F^{\prime \prime }(\varphi
_{1}))\nabla \varphi _{2},\nabla \varphi \big),  \label{diffF''}
\end{align}
and the last term on the right-hand side of this identity can be estimated
as
%Now, since $\varphi _{10},\varphi _{20}\in L^{\infty }(\Omega )$, then we
%have $\sup_{t\in (0,T)}\Vert \varphi _{i}(t)\Vert _{L^{\infty }}\leq C_{i}$,%
%for $i=1,2$, and therefore we can write
\begin{align*}
& \big|\big((F^{\prime \prime }(\varphi _{2})-F^{\prime \prime }(\varphi
_{1}))\nabla \varphi _{2},\nabla \varphi \big)\big|\leq \Vert F^{\prime
\prime }(\varphi _{2})-F^{\prime \prime }(\varphi _{1})\Vert _{L^{4}}\Vert
\nabla \varphi _{2}\Vert _{L^{4}}\Vert \nabla \varphi \Vert  \notag \\
& \leq C\Vert \varphi \Vert _{L^{4}}\Vert \nabla \varphi _{2}\Vert
_{L^{4}}\Vert \nabla \varphi \Vert \leq C(\Vert \varphi \Vert +\Vert \varphi
\Vert ^{1/2}\Vert \nabla \varphi \Vert ^{1/2}) {\color{black} \Vert \nabla
\varphi _{2}\Vert _{L^{4}}} \Vert \nabla \varphi \Vert  \notag \\
& \leq \frac{c_{0}}{4}\Vert \nabla \varphi \Vert ^{2}+C(1+ {\color{black}\Vert
\nabla \varphi _{2}\Vert _{L^{4}}^4} )\Vert \varphi \Vert ^{2}.  \notag
\end{align*}

Hence, by means of assumption (H3),
%$a(x)+F^{\prime \prime }(s)\geq c_{0}$, for
%all $s\in \mathbb{R}$ and a.a. $x\in \Omega $, where $c_{0}>0$,
we get
\begin{align*}
& (\nabla \mu ,\nabla \varphi )\geq c_{0}\Vert \nabla \varphi \Vert
^{2}-2\Vert \nabla J\Vert _{L^{1}}\Vert \varphi \Vert \Vert \nabla \varphi
\Vert -\frac{c_{0}}{4}\Vert \nabla \varphi \Vert ^{2}-C(1+ {\color{black}\Vert
\nabla \varphi _{2}\Vert _{L^{4}}^4})\Vert \varphi \Vert ^{2}  \notag \\
& \geq \frac{c_{0}}{2}\Vert \nabla \varphi \Vert ^{2}-C(1+ {\color{black}\Vert
\nabla \varphi _{2}\Vert _{L^{4}}^4})\Vert \varphi \Vert ^{2}.
\end{align*}%
Finally, the last term in \eqref{vv3} coming from the nonlocal Cahn-Hilliard
equation can be controlled as follows
\begin{equation}
I_{4}\leq \Vert u\Vert _{L^{4}}\Vert \nabla \varphi _{2}\Vert _{L^{4}}\Vert
\varphi \Vert \leq \frac{\nu _{1}}{12}\Vert \nabla u\Vert ^{2}+c {\color{black}
\Vert \nabla \varphi _{2}\Vert _{L^{4}}^2} \Vert \varphi \Vert ^{2}.
\label{vv11}
\end{equation}

By plugging estimates \eqref{k}--\eqref{vv11} into \eqref{vv3} we are
led to the following differential inequality
\begin{equation}
\frac{1}{2}\frac{d}{dt}\big(\Vert u\Vert ^{2}+\Vert \varphi \Vert ^{2}\big)+%
\frac{\nu _{1}}{2}\Vert \nabla u\Vert ^{2}+\frac{c_{0}}{4}\Vert \nabla
\varphi \Vert ^{2}\leq {\color{black}\Pi} \big(\Vert u\Vert ^{2}+\Vert \varphi \Vert
^{2} \big),  \label{vv12}
\end{equation}%
where the function ${\color{black}\Pi}$ is given by
\begin{equation*}
{\color{black} \Pi} =c\big(1+\Vert \nabla u_{2}\Vert ^{2}\Vert u_{2}\Vert
_{H^{2}}^{2}+\Vert \nabla u_{1}\Vert ^{2}+\Vert \varphi _{1}\Vert
_{L^{4}}^{2}+\Vert \varphi _{2}\Vert _{L^{4}}^{2}+ {\color{black} \Vert \nabla
\varphi _{2}\Vert _{L^{4}}^2+\Vert \nabla \varphi _{2}\Vert _{L^{4}}^4} \big),
\end{equation*}%
and due to the regularity properties of the weak solution $[u_{1},\varphi
_{1}]$ and of the strong solution $[u_{2},\varphi _{2}]$ we have $\Pi \in
L^{1}(0,T)$. Weak-strong uniqueness follows by applying Gronwall's lemma to %
\eqref{vv12}. In addition, a continuous dependence estimate in $L^{2}(\Omega)^2$
can also be deduced by considering two solutions with different
initial data {\color{black} and external forces}.
\end{proof}

{\color{black} Before concluding this section let us make some remarks on
weak-strong uniqueness in the case of nonconstant viscosity and singular
potential. We first observe that, if the potential is singular and the
mobility is constant, then weak-strong uniqueness does not seem to be easy to
prove. The reason is in the way the term $2\big((\nu (\varphi _{2})-\nu
(\varphi _{1}))Du_{2},Du\big)$ in \eqref{vv1} can be estimated (even
assuming higher regularity for the strong solution $[u_2,\varphi_2]$). This
forces to choose $\varphi$ (instead of $B_N^{-1}\varphi$) as test function
in \eqref{vv2}. With this choice we have the term $(\nabla\mu,\nabla\varphi)$
(instead of $(\mu,\varphi)$) on the left-hand side of %
\eqref{vv2}. Therefore we are led to deal with the
difference $F^{\prime\prime}(\varphi_2)-F^{\prime\prime}(\varphi_1)$ (cf. %
\eqref{diffF''}) which we do not know how to handle.

However, if the potential is singular and the mobility is degenerate,
% (see Subsection \ref{singpotdm},
%concernig the
%the constant viscosity case, for the assumptions),
thanks to the particular weak formulation of the convective nonlocal
Cahn-Hilliard (cf. \eqref{nlocCH2}), the weak-strong
uniqueness can be proven as stated in the next theorem. In order to
do that, we just need to strengthen (A1) slightly, namely,

\begin{description}
\item[(A7)] $mF^{\prime\prime}\in C^1([-1,1])$.
\end{description}

We point out that in the case of singular potential, degenerate mobility and
constant (or nonconstant) viscosity, existence of strong solutions in 2D for
the nonlocal Cahn-Hilliard-Navier-Stokes system has not been proven yet.
This result, which actually can be established, will be presented in a
forthcoming paper.

\begin{thm}
Let $d=2$ and suppose that assumptions (A1)--(A7) and (H2)
%of \cite[Theorem 2 and Proposition 4]{FGR}
are satisfied. Let $u_0\in G_{div}$, $\varphi_0\in L^\infty(\Omega)$ with $%
F(\varphi_0)\in L^1(\Omega)$, $M(\varphi_0)\in L^1(\Omega)$ and  let $%
[u_{1},\varphi _{1}]$ be a weak solution and $[u_{2},\varphi _{2}]$ a strong
solution to \eqref{sy3}--\eqref{sy6} satisfying  \eqref{regps1} and
\eqref{regps2} both corresponding to $[u_{0},\varphi _{0}]$ and to the same
external force $h\in L^2(0,T;V_{div}^{\prime})$.
%The existence of a strong solution
%is ensured by Theorem \ref{thmncvisc} if, in addition, $u_{0}\in V_{div}$, $%
%\varphi _{0}\in H^{2}(\Omega )$, $F\in C^{3}(\mathbb{R})$ and either $J\in
%W^{2,1}(B_{\delta })$ or $J$ admissible.
Then $u_{1}=u_{2}$ and $\varphi _{1}=\varphi _{2}$.
\end{thm}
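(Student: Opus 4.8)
The plan is to follow the proof of Theorem \ref{weak-uniq} for the Navier-Stokes part, while replacing the Cahn-Hilliard part by the argument of Theorem \ref{uniqthmdeg}, i.e.\ working with the reformulation \eqref{nlocCH2} in which $\mu$ does not appear explicitly. First I would write the system for the difference $u:=u_2-u_1$, $\varphi:=\varphi_2-\varphi_1$: the momentum equation is exactly \eqref{vv1}, while for the order parameter I take the difference of the two identities \eqref{nlocCH2} satisfied by the strong and the weak solution. Since both solutions start from $[u_0,\varphi_0]$ the averages coincide, so $\overline{\varphi}\equiv 0$ and $h\equiv 0$; in particular $u(0)=0$ and $\varphi(0)=0$.

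The crucial point is the choice of test functions. As in Theorem \ref{weak-uniq}, the viscosity difference term $2\big((\nu(\varphi_2)-\nu(\varphi_1))Du_2,Du\big)$ can only be controlled through $\Vert\nu(\varphi_2)-\nu(\varphi_1)\Vert_{L^4}\leq C\Vert\varphi\Vert_{L^4}$ (using (H2) and the boundedness of the $\varphi_i$) at the price of a factor $\Vert\varphi\Vert_{L^4}\leq c\Vert\varphi\Vert^{1/2}\Vert\varphi\Vert_V^{1/2}$; this makes $\Vert\nabla\varphi\Vert$ appear and therefore forces the choice $\psi=\varphi$ (rather than $\psi=B_N^{-1}\varphi$) in the Cahn-Hilliard identity. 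Testing \eqref{vv1} by $u$ and estimating $b(u,u_1,u)$ and $I_1,I_2,I_3$ exactly as in \eqref{k} then yields
\begin{equation*}
\tfrac12\tfrac{d}{dt}\Vert u\Vert^2+\tfrac{\nu_1}{2}\Vert\nabla u\Vert^2\leq \tfrac{(1-\rho)\alpha_0}{4}\Vert\nabla\varphi\Vert^2+C\big(1+\Vert\nabla u_2\Vert^2\Vert u_2\Vert_{H^2}^2\big)\Vert\varphi\Vert^2,
\end{equation*}
where the coefficient of $\Vert\varphi\Vert^2$ is in $L^1(0,T)$ thanks to $u_2\in L^\infty(0,T;V_{div})\cap L^2(0,T;H^2)$.

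For the order parameter I would test the difference of \eqref{nlocCH2} by $\psi=\varphi$, and here the degenerate structure is what makes the argument work. Writing $g(x,s):=\partial_s\Lambda(x,s)=m(s)(F^{\prime\prime}(s)+a(x))$ and expanding the gradient, the leading term $\big(\nabla[\Lambda(\cdot,\varphi_2)-\Lambda(\cdot,\varphi_1)],\nabla\varphi\big)$ splits as
\begin{equation*}
\big(g(\cdot,\varphi_1)\nabla\varphi,\nabla\varphi\big)+\big([g(\cdot,\varphi_2)-g(\cdot,\varphi_1)]\nabla\varphi_2,\nabla\varphi\big)+\big([\Gamma(\varphi_2)-\Gamma(\varphi_1)]\nabla a,\nabla\varphi\big).
\end{equation*}
By (A5)--(A6), as in Theorem \ref{uniqthmdeg}, one has $g(\cdot,\varphi_1)\geq(1-\rho)\alpha_0$, so the first term bounds $(1-\rho)\alpha_0\Vert\nabla\varphi\Vert^2$ from below and supplies precisely the $\nabla\varphi$-coercivity required to absorb the viscosity term above; this is exactly the estimate that fails for a singular potential with constant mobility, where one would instead face the uncontrollable difference $F^{\prime\prime}(\varphi_2)-F^{\prime\prime}(\varphi_1)$. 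Assumption (A7) guarantees that $mF^{\prime\prime}$, hence $g(x,\cdot)$, is Lipschitz on $[-1,1]$, so $|g(\cdot,\varphi_2)-g(\cdot,\varphi_1)|\leq C|\varphi|$ and the second term is dominated by $C\Vert\varphi\Vert_{L^4}\Vert\nabla\varphi_2\Vert_{L^4}\Vert\nabla\varphi\Vert$, which after Young's inequality is absorbed into $\varepsilon\Vert\nabla\varphi\Vert^2$ plus $C(1+\Vert\nabla\varphi_2\Vert_{L^4}^4)\Vert\varphi\Vert^2$, the strong regularity $\varphi_2\in L^\infty(0,T;V)\cap L^2(0,T;H^2)$ giving $\nabla\varphi_2\in L^4(0,T;L^4)$. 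The third term, together with the remaining contributions coming from $\Gamma(\varphi)\nabla a$ and $m(\varphi_i)(\varphi_i\nabla a-\nabla J\ast\varphi_i)$, is handled by the Lipschitz continuity of $\Gamma$, $m$ and $s\mapsto m(s)s$ and by $\nabla J\ast\varphi_2\in L^\infty$, producing bounds of the form $C\Vert\varphi\Vert\Vert\nabla\varphi\Vert\leq\varepsilon\Vert\nabla\varphi\Vert^2+C\Vert\varphi\Vert^2$.

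Finally, the convective right-hand side $(u_2\varphi_2-u_1\varphi_1,\nabla\varphi)=(u\varphi_2,\nabla\varphi)+(u_1\varphi,\nabla\varphi)$ splits favourably: the second summand vanishes because $u_1$ is divergence free with $u_1\cdot n=0$, while the first is estimated by $\Vert\varphi_2\Vert_{L^\infty}\Vert u\Vert\Vert\nabla\varphi\Vert\leq\varepsilon\Vert\nabla\varphi\Vert^2+C\Vert u\Vert^2$ using the boundedness of $\varphi_2$. Collecting these bounds gives $\tfrac12\tfrac{d}{dt}\Vert\varphi\Vert^2+(1-\rho)\alpha_0\Vert\nabla\varphi\Vert^2\leq \varepsilon(\Vert\nabla\varphi\Vert^2+\Vert\nabla u\Vert^2)+\Pi(\Vert u\Vert^2+\Vert\varphi\Vert^2)$ with $\Pi\in L^1(0,T)$. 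Adding this to the Navier-Stokes inequality and taking $\varepsilon$ small enough to absorb the $\Vert\nabla\varphi\Vert^2$ and $\Vert\nabla u\Vert^2$ cross terms into the coercive ones, I obtain
\begin{equation*}
\tfrac{d}{dt}\big(\Vert u\Vert^2+\Vert\varphi\Vert^2\big)+c\big(\Vert\nabla u\Vert^2+\Vert\nabla\varphi\Vert^2\big)\leq\Pi\big(\Vert u\Vert^2+\Vert\varphi\Vert^2\big),\qquad \Pi\in L^1(0,T).
\end{equation*}
Since $u(0)=0$ and $\varphi(0)=0$, Gronwall's lemma yields $u\equiv0$, $\varphi\equiv0$, i.e.\ $u_1=u_2$ and $\varphi_1=\varphi_2$. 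The main obstacle, and the only genuinely new point with respect to Theorems \ref{weak-uniq} and \ref{uniqthmdeg}, is the simultaneous need for $\nabla\varphi$-coercivity (imposed by the variable viscosity) and for a controllable expansion of $\nabla[\Lambda(\cdot,\varphi_2)-\Lambda(\cdot,\varphi_1)]$; this is exactly what forces the strengthened hypothesis (A7), which is used to Lipschitz-estimate $g(x,\cdot)=\partial_s\Lambda(x,\cdot)$.
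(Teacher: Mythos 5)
Your proposal is correct and follows essentially the same route as the paper: test the momentum difference by $u$ exactly as in Theorem \ref{weak-uniq}, test the difference of the reformulated Cahn--Hilliard identities \eqref{nlocCH2} by $\varphi$, use (A5)--(A6) for the coercivity $\big(\partial_s\Lambda(\cdot,\varphi_1)\nabla\varphi,\nabla\varphi\big)\geq(1-\rho)\alpha_0\Vert\nabla\varphi\Vert^2$ and (A7) for the Lipschitz bound on $\partial_s\Lambda$, then add and apply Gronwall. The only cosmetic difference is that you keep the $\big((\Gamma(\varphi_2)-\Gamma(\varphi_1))\nabla a,\nabla\varphi\big)$ contribution explicitly and estimate it by Lipschitz continuity of $\Gamma$, whereas in the paper this term cancels against the corresponding term of \eqref{nlocCH2}; both treatments are fine.
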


\begin{proof}
Let us write the variational formulation of \eqref{sy3}--\eqref{sy4} and \eqref{nlocCH2}
for each solution and take the difference, setting $u:=u_2-u_1$, $%
\varphi:=\varphi_2-\varphi_1$. Then we choose $v=u$ as test function in the
first identity \eqref{vv1}  and $\psi\zeta=\varphi$ as test function in the
second. Concerning the first identity, we can argue exactly as in the proof
of Theorem \ref{weak-uniq} and get
\begin{align}
& \frac{1}{2}\frac{d}{dt}\Vert u\Vert ^{2}+2\big((\nu (\varphi _{2})-\nu
(\varphi _{1}))Du_{2},Du\big)+2\big(\nu (\varphi _{1})Du,Du\big)%
+b(u,u_{1},u),  \notag \\
&=I_{1}+I_{2}+I_{3}. \label{diffid2}
\end{align}%
Then, by similarly estimating the terms in \eqref{diffid2}, we find
\begin{align}
&\frac{1}{2}\frac{d}{dt}\Vert u\Vert^2+\frac{\nu_1}{2}\Vert\nabla
u\Vert^2\leq\frac{1}{4}(1-\rho)\alpha_0\Vert\nabla\varphi\Vert^2  \notag \\
& +C(1+\Vert \nabla u_{2}\Vert ^{2}\Vert u_{2}\Vert _{H^{2}}^{2}+\Vert
\varphi _{1}\Vert _{L^{4}}^{2}+\Vert \varphi _{2}\Vert
_{L^{4}}^{2})\Vert\varphi\Vert^2+C\Vert \nabla u_{1}\Vert ^{2}\Vert u\Vert^2.
\label{diffineq1}
\end{align}
As far as the identity resulting from the difference in the Cahn-Hilliard is
concerned, if we set
\begin{align*}
&b(x,s):=\partial_s\Lambda(x,s)=m(s)(F^{\prime\prime}(s)+a(x)),\qquad\forall
s\in[-1,1],\quad\mbox{a.e. }x\in\Omega,
\end{align*}
this identity reads as follows
\begin{align}
&\frac{1}{2}\frac{d}{dt}\Vert \varphi\Vert^2 +\big(b(\cdot,\varphi_1)\nabla%
\varphi,\nabla\varphi\big) +\big((b(\cdot,\varphi_2)-b(\cdot,\varphi_1))%
\nabla\varphi_2,\nabla\varphi\big)  \notag \\
&+\big((m(\varphi_2)-m(\varphi_1))(\varphi_2\nabla a-\nabla
J\ast\varphi_2),\nabla\varphi\big)  \notag \\
&+\big(m(\varphi_1)(\varphi\nabla a -\nabla J\ast\varphi),\nabla \varphi\big)
\notag \\
&=\big(u\varphi_2,\nabla\varphi\big).  \label{nlocCH8}
\end{align}
Observe now that, thanks to assumptions (A5), (A6) and (A7), we have
\begin{align*}
&b(x,s)\geq (1-\rho)\alpha_0,\qquad |b(x,s_2)-b(x,s_1)|\leq
k^{\prime}|s_2-s_1|,
\end{align*}
for all $s,s_1,s_2\in[-1,1]$ and for almost every $x\in\Omega$. Here
$k^{\prime}=\Vert(mF^{\prime\prime})^{\prime}\Vert_{C([-1,1])}+\Vert
m^{\prime}\Vert_{C([-1,1])}\Vert a\Vert_{L^\infty(\Omega)}$. Let us now
estimate the terms in \eqref{nlocCH8}, taking the bounds $%
|\varphi_i|\leq 1$, $i=1,2$, into account. %We can therefore estimate
The second and third term on the left-hand side %of \eqref{nlocCH8}
can be estimated in the following way
\begin{align*}
\big(b(\cdot,\varphi_1)\nabla\varphi,\nabla\varphi\big)&\geq(1-\rho)\alpha_0%
\Vert\nabla\varphi\Vert^2, \\
\big((b(\cdot,\varphi_2)-b(\cdot,\varphi_1))\nabla\varphi_2,\nabla\varphi%
\big)&\leq
k^{\prime}\Vert\varphi\Vert_{L^4}\Vert\nabla\varphi_2\Vert_{L^4}\Vert\nabla%
\varphi\Vert  \notag \\
&\leq
c\Vert\varphi\Vert^{1/2}\Vert\varphi\Vert_V^{1/2}\Vert\nabla\varphi_2%
\Vert_{L^4}\Vert\nabla\varphi\Vert  \notag \\
&\leq \frac{1}{32}(1-\rho)\alpha_0\Vert\nabla\varphi\Vert^2+c\Vert\varphi%
\Vert\Vert\varphi\Vert_V\Vert\nabla\varphi_2\Vert_{L^4}^2  \notag \\
&\leq\frac{1}{16}(1-\rho)\alpha_0\Vert\nabla\varphi\Vert^2+c\big(%
1+\Vert\nabla\varphi_2\Vert_{L^4}^4\big)\Vert\varphi\Vert^2.
\end{align*}
Furthermore, it is immediate to see that the last two terms on the left-hand
side of \eqref{nlocCH8} can be controlled in this way
\begin{align*}
& c\Vert\varphi\Vert\Vert\nabla\varphi\Vert\leq\frac{1}{16}%
(1-\rho)\alpha_0\Vert\nabla\varphi\Vert^2+c\Vert\varphi\Vert^2,
\end{align*}
and, finally, the term on the right-hand side can be controlled by
\begin{align*}
& c\Vert u\Vert\Vert\nabla\varphi\Vert\leq\frac{1}{16}(1-\rho)\alpha_0\Vert%
\nabla\varphi\Vert^2+c\Vert u\Vert^2.
\end{align*}
From \eqref{nlocCH8}, using the estimates above, we are therefore led to the
following differential inequality
\begin{align}
&\frac{1}{2}\frac{d}{dt}\Vert \varphi\Vert^2 +\frac{3}{4}(1-\rho)\alpha_0%
\Vert\nabla\varphi\Vert^2\leq c\big(1+\Vert\nabla\varphi_2\Vert_{L^4}^4\big)%
\Vert\varphi\Vert^2+c\Vert u\Vert^2.  \label{diffineq2}
\end{align}
Thus, from \eqref{diffineq1} and \eqref{diffineq2} we deduce
\begin{equation}
\frac{1}{2}\frac{d}{dt}\big(\Vert u\Vert ^{2}+\Vert \varphi \Vert ^{2}\big)+%
\frac{\nu _{1}}{2}\Vert \nabla u\Vert ^{2}+\frac{1}{2}(1-\rho)\alpha_0\Vert
\nabla \varphi \Vert ^{2}\leq \gamma \big(\Vert u\Vert ^{2}+\Vert \varphi
\Vert ^{2} \big),  \label{diffineq3}
\end{equation}%
where $\gamma \in L^1(0,T)$ has the same form as given at the
end of the proof of Theorem \ref{weak-uniq}. We conclude again by applying
Gronwall's lemma to \eqref{diffineq3}. Moreover, a continuous dependence
estimate in $L^2(\Omega)^2$ can be deduced in the present situation as well
by considering two solutions with different data.
\end{proof}

\begin{oss}
\label{openuniq} {\upshape
Uniqueness of weak solutions for the nonlocal Cahn-Hilliard-Navier-Stokes
system in 2D with nonconstant viscosity is an open issue. The difficulty
essentially comes from the term $2\big((\nu (\varphi
_{2})-\nu (\varphi _{1}))Du_{2},Du\big)$ in \eqref{diffid2}, which forces to
assume that one solution (e.g., $[u_2,\varphi_2]$) is strong. }
\end{oss}
}

\section{Global and exponential attractors}

\label{Sec5}\setcounter{equation}{0}

In this section we prove two results concerning the asymptotic behavior of
the dynamical system generated by \eqref{sy1}--\eqref{sy5} in dimension two.

The first result is related to the property of connectedness of the global
attractor whose existence was established in \cite{FG1} for nonconstant
viscosity, constant mobility and regular potential (see Remark \ref{singconn}
below, however).

The second result is the existence of an exponential attractor. This will be
proven in details when mobility and viscosity are constant and the potential
is regular. This kind of result relies on a regularization argument devised
in \cite{FGK} and on an abstract theorem (see \cite{EZ}) which generalizes a
well known result on the existence of exponential attractors in Banach
spaces (cf. \cite{EMZ}). A similar argument will be carried out in the
nonconstant viscosity case albeit we will work with strong solutions.

Let us define the dynamical system in the autonomous case. Take $d=2$ and $%
h\in V_{div}^{\prime }$. Then, as a consequence of Theorem \ref{uniqthm}, we
have that for every fixed $\eta \geq 0$ system \eqref{sy1}--\eqref{sy5}
generates a semigroup $\{S_{\eta }(t)\}_{t\geq 0}$ of {\itshape closed}
operators (see \cite{PZ}) on the metric space $\mathcal{X}_{\eta }$ given by
\begin{equation}  \label{phasespace}
\mathcal{X}_{\eta }:=G_{div}\times \mathcal{Y}_{\eta },
\end{equation}%
where
\begin{equation*}
\mathcal{Y}_{\eta }:=\{\varphi \in H:F(\varphi )\in L^{1}(\Omega ),\text{ }|%
\overline{\varphi }|\leq \eta \}.
\end{equation*}%
It is convenient to endow the space $\mathcal{X}_{\eta }$ with the following
metric
\begin{equation*}
\rho _{\mathcal{X}_{\eta }}(z_{2},z_{1})=\Vert u_{2}-u_{1}\Vert +\Vert
\varphi _{2}-\varphi _{1}\Vert +\Big|\int_{\Omega }F(\varphi
_{2})-\int_{\Omega }F(\varphi _{1})\Big|,\quad \forall z_{i}:=[u_{i},\varphi
_{i}]\in \mathcal{X}_{\eta },\; i=1,2.
\end{equation*}%
Notice that this metric is slightly different from the one which is
naturally associated to the energy $\mathcal{E}$ (the difference is in the
exponent in the third term, see \cite{FG1}).

%Another important consequence of Theorem \ref{uniqthm} concerns with the connectedness
%of the global attractor for the autonomous system \eqref{}-\eqref{} in 2D.
%Indeed, thanks
%Thanks to the closedness property of the semigroup $S_\eta(t)$, by applying \cite[Corollary 6]{PZ} we immediately
%deduce
A first noteworthy consequence of the uniqueness result for weak solutions
is the following

\begin{thm}
\label{connected} {\color{black}Let $d=2$ and let (H1)--(H5) be satisfied with
{\color{black}$\nu$ constant}. Assume also that that $h\in V_{div}^{\prime}$.}
Then, the global attractor in $\mathcal{X}_\eta$ for the semigroup $S_\eta(t)
$ is connected.
\end{thm}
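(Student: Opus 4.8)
The plan is to reduce the statement to the classical fact that the global attractor of a \emph{continuous} semigroup on a connected metric space is connected, the difficulty being that $S_\eta(t)$ is only a semigroup of closed operators on $\mathcal{X}_\eta$. First I would fix the external force $h\in V_{div}^{\prime}$ and recall from \cite{FG1} and the dissipative estimate \eqref{dissest} that $\mathcal{A}_\eta=\omega(\mathcal{B})=\bigcap_{t\ge0}\overline{\bigcup_{s\ge t}S_\eta(s)\mathcal{B}}$ for a bounded absorbing set $\mathcal{B}$, which I choose to be connected. Indeed $G_{div}$ is a linear space and $\mathcal{Y}_\eta$ is connected, since any $\varphi\in\mathcal{Y}_\eta$ can be joined to the constant $\overline{\varphi}$ by the segment $(1-\lambda)\varphi+\lambda\overline{\varphi}$, which preserves the mean constraint $|\overline{\cdot}|\le\eta$ and keeps $F(\cdot)\in L^1(\Omega)$ by the growth of $F$; hence $\mathcal{X}_\eta$ is connected and a connected bounded absorbing set is available.

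The key step is to upgrade the closed semigroup to a genuinely continuous one by weakening the metric. I introduce $d_w(z_2,z_1):=\Vert u_2-u_1\Vert+\Vert\varphi_2-\varphi_1\Vert_{V^{\prime}}$, so that $H\hookrightarrow V^{\prime}$ gives $d_w\le c\,\rho_{\mathcal{X}_\eta}$. In the autonomous case $h_1=h_2=h$, the continuous dependence estimate \eqref{estcontdip} of Theorem \ref{uniqthm}, combined with $|\overline{\varphi_2(0)}-\overline{\varphi_1(0)}|\le c\Vert\varphi_2(0)-\varphi_1(0)\Vert_{V^{\prime}}$ (obtained by testing the average against the constant $1$), shows that $z_0\mapsto S_\eta(t)z_0$ is $d_w$-continuous in the initial datum, uniformly for $t$ in compact intervals and for initial data in bounded sets. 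Together with the time-continuity provided by Theorem \ref{thm}, namely $u\in C([0,\infty);G_{div})$ and $\varphi\in C([0,\infty);H)\hookrightarrow C([0,\infty);V^{\prime})$, this yields joint $d_w$-continuity of $(t,z)\mapsto S_\eta(t)z$.

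Next I would observe that $\mathcal{A}_\eta$ is also the global attractor of $S_\eta$ for the metric $d_w$: it is $d_w$-compact (being $\rho_{\mathcal{X}_\eta}$-compact with $d_w\le c\,\rho_{\mathcal{X}_\eta}$), invariant, and $d_w$-attracts bounded sets (again because $d_w\le c\,\rho_{\mathcal{X}_\eta}$ and it $\rho_{\mathcal{X}_\eta}$-attracts them). I can then run the standard connectedness argument in $(\mathcal{X}_\eta,d_w)$: if $\mathcal{A}_\eta=\mathcal{A}_1\sqcup\mathcal{A}_2$ with $\mathcal{A}_1,\mathcal{A}_2$ disjoint, nonempty and $d_w$-closed, I pick disjoint open $\delta$-neighborhoods $O_1,O_2$ of them; by $d_w$-attraction $\bigcup_{s\ge t}S_\eta(s)\mathcal{B}\subseteq O_1\cup O_2$ for $t$ large, yet this set is $d_w$-connected (being the image of the connected set $[t,\infty)\times\mathcal{B}$ under the jointly $d_w$-continuous evolution map) and meets both $O_i$, because $\mathcal{A}_i\subseteq\overline{\bigcup_{s\ge t}S_\eta(s)\mathcal{B}}$. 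This is impossible, so $\mathcal{A}_\eta$ is $d_w$-connected.

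Finally, since $\mathcal{A}_\eta$ is compact in the stronger metric $\rho_{\mathcal{X}_\eta}$ and the identity map from $(\mathcal{A}_\eta,\rho_{\mathcal{X}_\eta})$ to $(\mathcal{A}_\eta,d_w)$ is a continuous bijection between compact Hausdorff spaces, it is a homeomorphism; the two subspace topologies on $\mathcal{A}_\eta$ therefore coincide, and $d_w$-connectedness gives connectedness in $\mathcal{X}_\eta$. I expect the main obstacle to be precisely the non-continuity of the closed operators $S_\eta(t)$ in the natural metric: the whole argument hinges on the continuous dependence estimate \eqref{estcontdip} controlling the difference of two solutions in the weaker $V^{\prime}$-topology (including the troublesome average term), which is exactly what makes $S_\eta(t)$ continuous for $d_w$ and lets the classical theorem apply.
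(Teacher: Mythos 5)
Your argument is correct in substance, but it follows a genuinely different route from the paper. The paper's proof is a two-line appeal to the theory of generalized semiflows: it invokes \cite[Corollary 4.3]{Ba}, whose hypotheses are (i) connectedness of the phase space $\mathcal{X}_\eta$ (guaranteed, as you also note, by the fact that $F$ is a quadratic perturbation of a convex function), (ii) strong time-continuity of trajectories (from Theorem \ref{thm}), and (iii) the Kneser property, which here is trivial because uniqueness makes every attainability set a singleton. That route needs only the \emph{qualitative} consequence of Section \ref{Sec3} (uniqueness), not the quantitative estimate \eqref{estcontdip}. You instead manufacture an honest continuous semigroup by passing to the weaker metric $d_w$ built on $G_{div}\times V'$, where \eqref{estcontdip} (with $h_1=h_2$ and $|\overline{\varphi_2(0)}-\overline{\varphi_1(0)}|\le c\Vert\varphi_2(0)-\varphi_1(0)\Vert_{V'}$) does give Lipschitz-type dependence on the data, run the classical connectedness argument for continuous semigroups there, and transfer back via the compact-Hausdorff homeomorphism trick. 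This is self-contained and avoids the generalized-semiflow machinery entirely, at the price of using the full strength of the continuous dependence estimate; the paper's version is shorter and would survive even if one only knew uniqueness without a modulus of continuity.

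Two details you should tighten. First, the claim that the segment $\varphi_\lambda=(1-\lambda)\varphi+\lambda\overline{\varphi}$ keeps $F(\varphi_\lambda)\in L^1(\Omega)$ ``by the growth of $F$'' is not quite right as stated: $\varphi$ is only in $H=L^2(\Omega)$ while $F$ may grow like $|s|^{r'}$ with $r'>2$, so the growth bound alone gives nothing. What saves you is (H3): writing $F=G-(\text{quadratic})$ with $G$ convex, convexity gives $\int_\Omega G(\varphi_\lambda)\le(1-\lambda)\int_\Omega G(\varphi)+\lambda|\Omega|G(\overline{\varphi})<\infty$, and the quadratic remainder is handled by $\varphi\in L^2$; this is exactly the ``quadratic perturbation of a convex function'' point the paper makes, and it also yields the continuity of $\lambda\mapsto\int_\Omega F(\varphi_\lambda)$ needed for arcwise connectedness in the metric $\rho_{\mathcal{X}_\eta}$. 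Second, you need a \emph{connected} bounded absorbing set, and balls for $\rho_{\mathcal{X}_\eta}$ are not obviously connected because of the $|\int F(\varphi_2)-\int F(\varphi_1)|$ term; the cleanest fix is to replace the absorbing set $\mathcal{B}_0$ of \cite{FG1} by the union of the segments joining each of its points to a fixed base point, which is connected, still absorbing, and remains bounded by the same convexity estimate. With these two points made explicit your proof is complete.
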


\begin{proof}
The conclusion follows immediately by applying \cite[Corollary 4.3]{Ba}.
Indeed, the space $\mathcal{X}_\eta$ is (arcwise) connected, thanks to the
fact that $F$ is a quadratic perturbation of a convex function. Moreover, we
have the strong time continuity of each trajectory $z=[u,\varphi]$ from $%
[0,\infty)$ to the metric space $\mathcal{X}_\eta$ (see Theorem \ref{thm}).
Thus Kneser's property is satisfied thanks to uniqueness.
\end{proof}

%{\color{black} }

\begin{oss}
\label{singconn} {\upshape Theorem \ref{connected} also holds in the case of
constant (or degenerate) mobility and singular potential on account of
Theorem \ref{uniqthmsing} and \cite[Proposition 4]{FG2} (or Theorem \ref%
{uniqthmdeg} and \cite[Proposition 3]{FGR}). The argument is similar. {%
\color{black} On the other hand, if the viscosity is nonconstant, then the
connectedness of the global attractor is an open issue (cf. Remark %
\ref{openuniq}). } }
\end{oss}

The second result is the existence of an exponential attractor. We first
recall its definition.

\begin{defn}
A compact set $\mathcal{M}_\eta\subset\mathcal{X}_\eta$ is an {\itshape %
exponential attractor} for the dynamical system $(\mathcal{X}%
_\eta,S_\eta(t)) $ if the following properties are satisfied

\begin{itemize}
\item[(i)] positive invariance: $S_{\eta }(t)\mathcal{M}_{\eta }\subseteq
\mathcal{M}_{\eta }$ for all $t\geq 0$;

\item[(ii)] finite dimensionality: dim$_F(\mathcal{M}_\eta,\mathcal{X}%
_\eta)<\infty$;

\item[(iii)] exponential attraction: $\exists$ $Q:\mathbb{R}%
^{+}\rightarrow \mathbb{R}^{+}$ increasing and $\kappa >0$ such that, for
all $R>0$ and for all $\mathcal{B}\subset \mathcal{X}_{\eta }$ with $%
\sup_{z\in \mathcal{B}}\rho _{\mathcal{X}_{\eta }}(z,0)\leq R$ there holds
\begin{equation*}
dist_{\mathcal{X}_{\eta }}(S_{\eta }(t)\mathcal{B},\mathcal{M}_{\eta })\leq
Q(R)e^{-\kappa t},\qquad \forall t\geq 0.
\end{equation*}
\end{itemize}
\end{defn}

\begin{thm}
\label{expattthm} Let $d=2$. Assume that (H1)--(H5) are satisfied with {%
\color{black}$\nu$ constant}. Then the dynamical system $(\mathcal{X}%
_\eta,S_\eta(t))$ possesses an exponential attractor $\mathcal{M}_\eta$
which is bounded in $V_{div}\times W^{1,p}(\Omega)$, $2<p<\infty$.
\end{thm}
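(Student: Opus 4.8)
The plan is to invoke the abstract construction of exponential attractors for closed semigroups from \cite{EZ} (which generalizes the Banach-space result of \cite{EMZ}). Three ingredients have to be checked: a bounded, positively invariant absorbing set that is more regular than the phase space; H\"older continuity in time of the semigroup on that set; and a smoothing estimate for the difference of two trajectories. I would verify them in this order.

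First I would construct the regular absorbing set. The dissipative estimate \eqref{dissest} furnishes a bounded absorbing set $\mathcal{B}_0$ in $\mathcal{X}_\eta$. Since $\nu$ and $m$ are constant and $F$ is regular, the regularization argument of \cite{FGK} applies: every weak solution starting in $\mathcal{B}_0$ becomes instantaneously more regular, so after a finite time its trajectory is bounded in $V_{div}\times W^{1,p}(\Omega)$ (with $u\in V_{div}$ and $\varphi\in W^{1,p}(\Omega)$, $2<p<\infty$). Closing up the union of these more regular pieces of trajectory yields a set $\mathcal{B}_1$ that is bounded in $V_{div}\times W^{1,p}(\Omega)$, hence relatively compact in $\mathcal{X}_\eta$, absorbing, and (after a standard enlargement) positively invariant. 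Because $\mathcal{M}_\eta\subset\mathcal{B}_1$, the desired bound of the exponential attractor in $V_{div}\times W^{1,p}(\Omega)$ is then automatic. The H\"older continuity of $t\mapsto S_\eta(t)z$ in the metric $\rho_{\mathcal{X}_\eta}$, uniformly for $z\in\mathcal{B}_1$, would follow from the uniform bounds on $u_t$ and $\varphi_t$ supplied by that same regularity.

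The core of the argument is the smoothing estimate, obtained by adapting the uniqueness computation. Let $z_i=[u_i,\varphi_i]$ be two trajectories issuing from $\mathcal{B}_1$ with the same external force, and write $u:=u_2-u_1$, $\varphi:=\varphi_2-\varphi_1$. Testing the difference equations \eqref{NSdiff} and \eqref{CHdiff} against $u$ and $\varphi$ respectively, exactly as in the proof of Theorem~\ref{weak-uniq} but now with the viscosity-difference term absent, and using the regularity of both trajectories to control the terms involving $F''(\varphi_2)-F''(\varphi_1)$ and $\nabla\varphi_2$, I would obtain an inequality of the form
\begin{equation*}
\frac{d}{dt}\big(\Vert u\Vert^2+\Vert\varphi\Vert^2\big)+\nu\Vert\nabla u\Vert^2+\tfrac{c_0}{2}\Vert\nabla\varphi\Vert^2\le \Pi\big(\Vert u\Vert^2+\Vert\varphi\Vert^2\big),
\end{equation*}
with $\Pi\in L^1(0,t^*)$; this yields continuous dependence in $L^2\times L^2$ together with a bound on $\int_0^{t^*}\big(\Vert\nabla u\Vert^2+\Vert\nabla\varphi\Vert^2\big)$ in terms of $\Vert u(0)\Vert^2+\Vert\varphi(0)\Vert^2$. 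To upgrade this integral control to a pointwise bound in the stronger norm at a fixed time $t^*$, I would derive a second, higher-order differential inequality for $Y:=\Vert\nabla u\Vert^2+\Vert\nabla\varphi\Vert^2$, obtained by testing the velocity-difference equation against $-\Delta u$ and by controlling $\nabla\widetilde\mu$ through \eqref{chpotdiff}, and then apply the uniform Gronwall lemma fed by the integral bound just obtained. This produces
\begin{equation*}
\Vert u(t^*)\Vert_{V_{div}}^2+\Vert\varphi(t^*)\Vert_V^2\le L\big(\Vert u(0)\Vert^2+\Vert\varphi(0)\Vert^2\big),
\end{equation*}
i.e. the smoothing into $V_{div}\times V$, which is compactly embedded in $\mathcal{X}_\eta$.

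The main obstacle will be precisely this higher-order difference estimate: one must control the Korteweg force and the nonlocal convolution terms, as well as the difference $F''(\varphi_2)-F''(\varphi_1)$, in the stronger norm, which is feasible only because the trajectories in $\mathcal{B}_1$ are already regular. A second, structural subtlety is the conserved mean value: the term $\big|\overline{\varphi_1(0)}-\overline{\varphi_2(0)}\big|$ in \eqref{estcontdip} is linear in the data and does not decay, so the clean Lipschitz smoothing above is available on each invariant slice $\{\overline{\varphi}=m\}$, where that term vanishes since $\overline{\varphi}$ is conserved. As $m$ ranges over the compact interval $[-\eta,\eta]$ and the dependence on the slice is itself controlled by \eqref{estcontdip}, the construction is assembled over this one-parameter family. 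With the regular positively invariant absorbing set, the time-H\"older continuity, and the smoothing property in hand, the abstract theorem of \cite{EZ} yields the exponential attractor $\mathcal{M}_\eta$, which has finite fractal dimension in $\mathcal{X}_\eta$ and is bounded in $V_{div}\times W^{1,p}(\Omega)$.
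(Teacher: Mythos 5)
Your overall architecture (regular absorbing set via the regularization of \cite{FGK}, H\"older continuity in time, smoothing of the difference of two trajectories, then the abstract theorem of \cite{EZ}) matches the paper's, but the central step --- the smoothing property --- is carried out in a way that does not close under the stated hypotheses. You propose a \emph{pointwise-in-time} smoothing, i.e.\ a second differential inequality for $Y=\Vert\nabla u\Vert^2+\Vert\nabla\varphi\Vert^2$ followed by the uniform Gronwall lemma, to land in $V_{div}\times V$ at a fixed time $t^*$. To derive the $\Vert\nabla\varphi\Vert^2$ part of that inequality you must test the difference of the Cahn--Hilliard equations against $-\Delta\varphi$ (or $B_N\varphi$), and the term $\big(F''(\varphi_2)-F''(\varphi_1)\big)\nabla\varphi_2$ in $\nabla\widetilde\mu$ then produces, after integration by parts, contributions involving second derivatives of $\varphi_2$. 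Under (H1)--(H5) alone the regularization result (Proposition \ref{regthmtau}) only gives $\varphi_i\in L^\infty(\tau,\infty;W^{1,p}(\Omega))$, \emph{not} $H^2$-regularity: the paper obtains $\varphi\in L^2(0,T;H^2(\Omega))$ only under the additional hypothesis $J\in W^{2,1}(B_\delta)$ or $J$ admissible (cf.\ Theorem \ref{thmncvisc} and the remark following it), which Theorem \ref{expattthm} does not assume. So your higher-order inequality cannot be closed, and the pointwise smoothing into $V_{div}\times V$ is not available here.

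The paper avoids this entirely by using the \emph{space-time} form of the smoothing property in \cite[Proposition 3.1]{EZ}: the map $\mathcal{T}w_0:=S_\eta(\cdot)w_0$ is shown to be Lipschitz from $\mathbb{B}$ into $\mathcal{V}_1:=L^2(0,T;V_{div}\times V)\cap H^1(0,T;V_{div}'\times D(B_N)')$, which embeds compactly into $\mathcal{V}:=L^2(0,T;G_{div}\times H)$ by Aubin--Lions, while the contraction estimate \eqref{contrass} comes from the exponential-decay-plus-integral inequality of Lemma \ref{expattlem2}. This only requires the \emph{first-order} energy estimate for the difference (integral control of $\Vert\nabla u\Vert^2+\Vert\nabla\varphi\Vert^2$, Lemma \ref{expattlem1}) together with dual-norm bounds on $u_t,\varphi_t$ (Lemma \ref{expattlem3}) --- all of which are accessible with $\varphi_i\in W^{1,p}$. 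A secondary remark: your concern about slicing by the conserved mean $\overline{\varphi}$ is unnecessary in this construction, since the relevant lemmas test the Cahn--Hilliard difference against $\varphi$ itself (not $B_N^{-1}(\varphi-\overline{\varphi})$), so the troublesome term $\overline{\varphi}\,\overline{\widetilde\mu}$ of \eqref{estcontdip} never appears, and the Poincar\'e--Wirtinger defect $\overline{\varphi}^2$ is simply absorbed into the Gronwall factor.
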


The proof of Theorem \ref{expattthm} is based on four lemmas. These lemmas
allow us to apply the abstract result in \cite{EZ}. For their proof we shall
need the following regularization result which is an easy consequence of
\cite[Theorem 2 and Proposition 1]{FGK} and has an independent interest. In
the statement and proof of this result we shall denote by $%
\Gamma_\tau=\Gamma_\tau\big(\mathcal{E}(z_0),\eta\big)$ a positive constant
depending on a positive time $\tau$, on the energy $\mathcal{E}(z_0)$ of the
initial datum $z_0:=[u_0,\varphi_0]$ of a weak solution, and on $\eta$,
where $\eta\geq 0$ is such that $|\overline{\varphi}_0|\leq\eta$ ($%
\Gamma_\tau$ may of course depend also on $h$, $F$, $J$, $\nu$ and $\Omega$%
). The value of $\Gamma_\tau$ may change even on the same line.

\begin{prop}
\label{regthmtau} Let $d=2$ and $h\in L_{tb}^{2}(0,\infty ;G_{div})$. Assume
that (H1)--(H5) are satisfied with {\color{black}$\nu$ constant}, and suppose $%
F\in C^{3}(\mathbb{R})$. Let $u_{0}\in G_{div}$, $\varphi _{0}\in H$ with $%
F(\varphi _{0})\in L^{1}(\Omega )$ and let $[u,\varphi ]$ be the weak
solution on $(0,\infty )$ to system \eqref{sy1}--\eqref{sy6} corresponding
to $[u_{0},\varphi _{0}]$. Then, for every $\tau >0$ there exists $\Gamma
_{\tau }>0$ such that we have
\begin{align}
& u\in L^{\infty }(\tau ,\infty ;V_{div})\cap L_{tb}^{2}\big(\tau ,\infty
;H^{2}(\Omega )^{2}\big),\qquad u_{t}\in L_{tb}^{2}\big(\tau ,\infty ;G_{div}%
\big),  \label{pregtau1} \\
& \varphi \in L^{\infty }\big(\tau ,\infty ;W^{1,p}(\Omega )\big),\qquad
2<p<\infty ,\qquad \varphi _{t}\in L^{\infty }(\tau ,\infty ;H)\cap
L_{tb}^{2}(\tau ,\infty ;V),  \label{pregtau2}
\end{align}%
with norms controlled by $\Gamma _{\tau }$.
%=\Gamma_\tau\big(\mathcal{E}(z_0),\eta\big)$
%depending on $\tau$, on the energy $\mathcal{E}(z_0)$ of $z_0:=[u_0,\varphi_0]$ and on $\eta$, where $\eta\geq 0$ is such that $|\overline{\varphi}_0|\leq\eta$
%($\Gamma_\tau$ may of course depend also on $F$, $J$, $\nu$ and $\Omega$).
In addition, {\color{black} for every
initial data $z_{0}:=[u_{0},\varphi _{0}]\in G_{div}\times H$ with $%
F(\varphi _{0})\in L^{1}(\Omega )$ and $|\overline{\varphi }_{0}|\leq \eta $,
there exists a constant $\Lambda =\Lambda (\eta )>0$ depending
only on $\eta $ (and on $F$, $J$, $\nu $ and $\Omega $) and
a time $t^{\ast }=t^{\ast }\big(\mathcal{E}(z_{0})\big)\geq 0$}
starting from which the weak solution corresponding to $z_{0}$ regularizes,
that is,
\begin{equation}
\Vert \nabla u(t)\Vert +\Vert \varphi (t)\Vert _{W^{1,p}(\Omega
)}+\int_{t}^{t+1}\Vert u(s)\Vert _{H^{2}(\Omega )^{2}}^{2}ds\leq \Lambda
(\eta ),\qquad \forall t\geq t^{\ast }.  \label{pregtau3}
\end{equation}
\end{prop}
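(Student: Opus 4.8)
The plan is to obtain the instantaneous regularization \eqref{pregtau1}--\eqref{pregtau2} for fixed $\tau>0$ by combining the uniform-in-time bounds already available for weak solutions (the dissipative estimate \eqref{dissest} and the a priori bound \eqref{est36}) with the higher-order \emph{local} estimates established in \cite[Theorem 2, Proposition 1]{FGK}. The key observation is that \cite{FGK} already proves, for constant $\nu$ and $F\in C^3$, that a weak solution which starts from more regular data becomes a strong solution; what is needed here is to make the constants explicit in terms of $\tau$ alone once we know the energy is bounded. First I would invoke the energy identity \eqref{eniden} to deduce that $t\mapsto\mathcal{E}(u(t),\varphi(t))$ is nonincreasing up to the forcing term, and then integrate \eqref{eniden} over $(0,t)$ to obtain a uniform $L^2_{tb}$-bound on $\nabla u$, $\nabla\mu$. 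By the mean value theorem for integrals there exists $t_0\in(0,\tau/2)$ at which $\Vert\nabla u(t_0)\Vert^2+\Vert\nabla\mu(t_0)\Vert^2$ is controlled by $\Gamma_\tau$; this gives an \emph{entry time} at which the solution already lies in the more regular class with norm bounded by a quantity depending only on $\tau$ and the initial energy.

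Second, starting from the time $t_0$ I would run the differential inequalities of the Step~1--Step~2 type (the analogues of \eqref{est25}, \eqref{idd3}, \eqref{est32b} specialized to constant $\nu$, which is precisely what \cite{FGK} does) and apply the uniform Gronwall lemma on the sliding windows $[t,t+1]$, $t\geq\tau$. The translation-bounded hypothesis $h\in L^2_{tb}(0,\infty;G_{div})$ is exactly what makes the right-hand sides uniformly $L^1$ on unit time intervals, so the uniform Gronwall lemma yields the $L^\infty(\tau,\infty;V_{div})$ and $L^2_{tb}(\tau,\infty;H^2)$ bounds for $u$, together with $u_t\in L^2_{tb}$, and in parallel the bounds $\varphi\in L^\infty(\tau,\infty;W^{1,p})$ and $\varphi_t\in L^\infty(\tau,\infty;H)\cap L^2_{tb}(\tau,\infty;V)$. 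For the $W^{1,p}$-control of $\nabla\varphi$ I would reuse the elliptic estimate \eqref{est38} (valid under (H3)) combined with the $H^2$-regularity of $\mu$, exactly as in Step~1 of Theorem \ref{thmncvisc}. All constants here depend only on $\tau$, on $\mathcal{E}(z_0)$ through the entry-time data, and on $\eta$ (which controls $\overline\varphi$ and hence the average of $\mu$).

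Third, for the dissipative regularization \eqref{pregtau3} I would replace the dependence on $\mathcal{E}(z_0)$ by a dependence on $\eta$ alone, at the cost of waiting a possibly long time $t^\ast=t^\ast(\mathcal{E}(z_0))$. The mechanism is the dissipative estimate \eqref{dissest}: it guarantees that $\mathcal{E}(u(t),\varphi(t))\leq R_0(\eta)$ for all $t\geq t^\ast(\mathcal{E}(z_0))$, where $R_0$ depends only on $\eta$ and the data $\Omega,\nu,J,F,\Vert h\Vert_{L^2_{tb}}$ but \emph{not} on the size of the initial energy. Applying the entry-time argument and the uniform Gronwall step of the previous paragraph with $t^\ast$ in place of $0$ then produces a bound $\Lambda(\eta)$ that is independent of $\mathcal{E}(z_0)$, which is precisely \eqref{pregtau3}.

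The main obstacle I anticipate is bookkeeping the dependence of constants to ensure that the final $\Lambda$ depends on $\eta$ only and not on the initial energy. The subtlety is that the higher-order Gronwall constants $\Gamma_\tau$ genuinely depend on $\mathcal{E}(z_0)$ through the $L^2_{tb}$-norms of $\nabla u$ and $\Vert\varphi\Vert_{L^4}$ that appear in the coefficients; one must therefore \emph{first} absorb the energy dependence via \eqref{dissest} (getting a uniform energy level $R_0(\eta)$ after time $t^\ast$) and only \emph{then} run the regularization, rather than the reverse. Keeping these two stages logically separated—an energy-dissipation stage controlled by $\eta$, followed by a smoothing stage on the resulting absorbing set—is the delicate point; the individual differential inequalities themselves are routine repetitions of the estimates in \cite{FGK} and in the proof of Theorem \ref{thmncvisc}.
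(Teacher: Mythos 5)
Your proposal is correct and follows essentially the same route as the paper: reduce to the smoothing estimates of \cite{FGK} run from a positive time, circumvent the missing initial regularity (in particular the lack of $\varphi_t(\tau/2)\in H$, which would otherwise force extra hypotheses on $J$) by a good-starting-time/uniform-Gronwall device --- the paper's variant is to multiply the differential inequality \eqref{variant1} by $(s-\tau/2)$ and integrate, which is interchangeable with your mean-value-theorem selection of an entry time --- and obtain the $\eta$-only bound \eqref{pregtau3} by first invoking the dissipative estimate \eqref{dissest} and only then running the regularization. The one ingredient you leave implicit, and which the paper states up front, is the instantaneous $L^\infty(\Omega)$ bound on $\varphi$ for $t\geq\tau$ (via \cite[Lemma 2.10]{GG4}), on which all the constants $Q(R)$ in the subsequent differential inequalities depend since $F$ is only locally $C^3$.
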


\begin{oss}
{\upshape
Notice that, differently from \cite[Theorem 2]{FGK}, in Proposition \ref%
{regthmtau} we do not require any further regularity assumption on $J$ in
addition to (H1). }
\end{oss}

\begin{proof}
{\color{black} Recalling the proof of \cite[Lemma 2.10]{GG4} and the dissipative estimate %
\eqref{dissest}, observe first that, if $z_{0}\in \mathcal{X}_{\eta }$, then
for every $\tau >0$ there exists $\Gamma _{\tau }=\Gamma _{\tau }\big(%
\mathcal{E}(z_{0}),\eta \big)$ such that
\begin{equation}
\Vert \varphi (t)\Vert _{L^{\infty }(\Omega )}\leq \Gamma _{\tau },\qquad
\forall t\geq \tau .  \label{sup-norm}
\end{equation}%
This implies that $\Vert \mu (t)\Vert _{L^{\infty }(\Omega )}\leq \Gamma
_{\tau }$ for all $t\geq \tau $, and hence that the Korteweg term $\mu
\nabla \varphi \in L^{2}(\tau ,T;L^{2}(\Omega )^{2})$. By Lemma \ref{holder}%
, there also holds%
\begin{equation}
\sup_{t\geq \tau }\left\Vert \varphi \right\Vert _{C^{\delta /2,\delta
}\left( \left[ t,t+1\right] \times \overline{\Omega }\right) }\leq \Gamma
_{\tau }\text{, }\forall t\geq \tau .  \label{holdi}
\end{equation}
}
We can now repeat exactly the same argument in the proof of \cite[Theorem 2]%
{FGK}, by writing the same estimates which now hold starting from a positive
time, say for $t\geq \tau /2>0$. We recall that these estimates are obtained
by multiplying the nonlocal Cahn-Hilliard by $\mu _{t}$ in $H$ and then by
differentiating the nonlocal Cahn-Hilliard with respect to time and
multiplying the resulting identity bu $\mu _{t}$. By doing so we are led to
a differential inequality of the following form
\begin{equation}
\frac{d}{ds}\log \Big(1+\int_{\Omega }\big(a+F^{\prime \prime }(\varphi )%
\big)\varphi _{t}^{2}\Big)\leq \Gamma _{\tau }\big(\sigma (s)+\Vert \varphi
_{t}\Vert ^{2}\big),\qquad \forall s\geq \tau /2,  \label{variant1}
\end{equation}%
where $\sigma =\Gamma _{\tau }\big(1+\Vert u\Vert _{H^{2}}^{2}+\Vert
u_{t}\Vert ^{2}$\big)and we have $\sigma \in L^{1}(\tau /2,T)$, for all $%
T>\tau /2$. At this point we argue a bit differently from the proof of \cite[%
Theorem 2]{FGK}. Indeed, here we want to avoid the $L^{2}$-norm of $\varphi
_{t}$ in $\tau /2$ which would require the initial condition $\varphi (\tau
/2)\in H^{2}(\Omega)$ and in addition would force us to make some further regularity
assumptions on the kernel $J$ (like, e.g., $J\in W^{2,1}(\mathbb{R}^2)$ {\color{black} or $J$ admissible)
in order to have $\varphi_t(\tau/2)\in H$}. Therefore, we
multiply \eqref{variant1} by $(s-\tau /2)$ and integrate with respect to $s$
between $\tau /2$ and $t\in (\tau /2,T)$. We get
\begin{align}
& \Big(t-\frac{\tau }{2}\Big)\log \Big(1+\int_{\Omega }\big(a+F^{\prime
\prime }(\varphi )\big)\varphi _{t}^{2}\Big)\leq \int_{\tau /2}^{T}\log \Big(%
1+\int_{\Omega }\big(a+F^{\prime \prime }(\varphi )\big)\varphi _{t}^{2}\Big)%
ds  \notag \\
& +\Gamma _{\tau }\Big(T-\frac{\tau }{2}\Big)\big(\Vert \sigma \Vert
_{L^{1}(\tau /2,T)}+\Vert \varphi _{t}\Vert _{L^{2}(\tau /2,T;H)}^{2}\big)
\notag \\
& \leq \Gamma _{\tau }\Vert \varphi _{t}\Vert _{L^{2}(\tau
/2,T;H)}^{2}+\Gamma _{\tau }\Big(T-\frac{\tau }{2}\Big)\big(\Vert \sigma
\Vert _{L^{1}(\tau /2,T)}+\Vert \varphi _{t}\Vert _{L^{2}(\tau /2,T;H)}^{2}%
\big),\qquad \forall t\in (\tau /2,T).  \notag
\end{align}%
From this inequality, on account of the fact that we have $\Vert \varphi
_{t}\Vert _{L^{2}(\tau /2,T;H)}\leq \Gamma _{\tau }$ (this was shown in the
first step of the proof of \cite[Theorem 2]{FGK}, before \eqref{variant1})
we deduce that
\begin{equation}
\varphi _{t}\in L^{\infty }(\tau ,T;H).  \label{variant2}
\end{equation}%
This bound, together with the following estimate (cf. proof of \cite[Theorem
2]{FGK})
\begin{equation*}
\Vert \nabla \mu \Vert _{L^{p}}\leq \Gamma _{\tau }\big(1+\Vert \varphi
_{t}\Vert ^{1-2/p}\big),\qquad 2<p<\infty ,
\end{equation*}%
yield
\begin{equation}
\varphi \in L^{\infty }\big(\tau ,T;W^{1,p}(\Omega )\big).  \label{variant3}
\end{equation}

Finally, arguing as in the proof of \cite[Proposition 1]{FGK} by applying
the uniform Gronwall's lemma, and taking \eqref{variant2}, \eqref{variant3}
(together with the bounds for $u$ on $(\tau ,T)$) into account, we get %
\eqref{pregtau1}, \eqref{pregtau2} and \eqref{pregtau3}, respectively.
\end{proof}

For the statements and proofs of the following lemmas we shall denote by $%
C_\tau=C_\tau\big(\mathcal{E}(z_{01}),\mathcal{E}(z_{02}),\eta\big)$ a
positive constant depending on a positive time $\tau$, on the energies $%
\mathcal{E}(z_{01})$, $\mathcal{E}(z_{02})$ of the initial data $%
z_{01},z_{02}\in\mathcal{X}_\eta$ of two weak solutions, and on $\eta$,
where $\eta>0$ is such that $|\overline{\varphi}_{01}|,|\overline{\varphi}%
_{02}|\leq\eta$ (of course, $C_\tau$ will generally depend also on $h$, $F$,
$J$, $\nu$ and $\Omega$). The value of $C_\tau$ may change even within the
same line. Furthermore, we shall always set $u:=u_2-u_1$, $%
\varphi:=\varphi_2-\varphi_1$.

\begin{lem}
\label{expattlem1} Let $d=2$. Assume that (H1)--(H5) are satisfied with {%
\color{black}$\nu$ constant} and that $F\in C^3(\mathbb{R})$. Let $u_{0i}\in
G_{div}$, $\varphi_{0i}\in H$ with $F(\varphi_{0i})\in L^1(\Omega)$ and $%
[u_i,\varphi_i] $ be the corresponding weak solutions, $i=1,2$. Then, for
every $\tau>0$ there exists $C_\tau>0$ % a positive constant
%$C_\tau=C_\tau\big(\mathcal{E}(z_{01}),\mathcal{E}(z_{02}),\eta\big)$
%depending on $\tau$, $\eta$ and on the energies $\mathcal{E}(z_{01})$, $\mathcal{E}(z_{02})$
such that we have {\small
\begin{align}
&\Vert u_2(t)-u_1(t)\Vert^2+\Vert\varphi_2(t)-\varphi_1(t)\Vert^2
+\int_\tau^t\Big(\frac{\nu}{4}\Vert\nabla(u_2(s)-u_1(s))\Vert^2 +\frac{c_0}{4%
}\Vert\nabla(\varphi_2(s)-\varphi_1(s))\Vert^2\Big)ds  \notag \\
&\leq e^{C_\tau t}\big(\Vert
u_2(\tau)-u_1(\tau)\Vert^2+\Vert\varphi_2(\tau)-\varphi_1(\tau)\Vert^2\big),
\qquad\forall t\geq\tau.  \label{expatt0}
\end{align}
}
%where $M$ is a positive constant depending on the norms $\Vert\nabla u_{0i}\Vert$, $\Vert\varphi_{0i}\Vert_{H^2}$, $i=1,2$
%of the initial data and on $\eta$, with $\eta>0$ such that $|\overline{\varphi}_{0i}|\leq\eta$, $i=1,2$.
\end{lem}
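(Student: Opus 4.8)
The plan is to run essentially the same computation as in the proof of Theorem \ref{uniqthm}, but now measuring the difference in the full $H\times H$ norm instead of in $H\times V^\prime$; the extra ingredient that makes this possible is the regularization of weak solutions for positive times. Set $u:=u_2-u_1$ and $\varphi:=\varphi_2-\varphi_1$, so that $[u,\varphi]$ solves the difference system \eqref{CHdiff}--\eqref{NSdiff} with $h=0$. First I would invoke Proposition \ref{regthmtau}: since $F\in C^3$, for every $\tau>0$ both weak solutions become strong on $[\tau,\infty)$, whence $\Vert u_i\Vert_{L^\infty(\tau,\infty;V_{div})}$, $\Vert\varphi_i\Vert_{L^\infty(\tau,\infty;W^{1,p}(\Omega))}$ (for $2<p<\infty$) and $\Vert\varphi_{it}\Vert_{L^\infty(\tau,\infty;H)}$ are all bounded by a constant $C_\tau$. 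In particular $\nabla\varphi_i$ is bounded in $L^\infty(\tau,\infty;L^4(\Omega)^2)$ and $\nabla u_i$ in $L^\infty(\tau,\infty;L^2(\Omega)^2)$. This boundedness is what turns the merely $L^1$ Gronwall coefficients of Theorem \ref{uniqthm} into genuine constants and, crucially, it is what allows one to test the Cahn--Hilliard difference with $\varphi$ itself (instead of $B_N^{-1}(\varphi-\overline\varphi)$), thereby producing control of the full $H$-norm of $\varphi$ and of $\Vert\nabla\varphi\Vert$.

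For the Navier--Stokes part I would multiply \eqref{NSdiff} by $u$ in $G_{div}$. Because $\nu$ is constant and $h=0$, this is verbatim the computation \eqref{est1}--\eqref{est4}; writing the convective difference as $b(u,u_2,u)$ and estimating it together with the terms $I_1,I_2,I_3$ of \eqref{est1}--\eqref{est3} (with the Young constants chosen so as to retain $\tfrac{3\nu}{8}\Vert\nabla u\Vert^2$) yields
\begin{equation*}
\frac{1}{2}\frac{d}{dt}\Vert u\Vert^2+\frac{3\nu}{8}\Vert\nabla u\Vert^2\leq c\Vert\varphi\Vert^2+\alpha\Vert u\Vert^2,
\end{equation*}
with $\alpha=c\Vert\nabla J\Vert_{L^1}^4\big(\Vert\varphi_1\Vert_{L^4}^4+\Vert\varphi_2\Vert_{L^4}^4\big)+c\Vert\nabla u_2\Vert^2$; by the regularization $\alpha\leq C_\tau$ for $t\geq\tau$.

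For the Cahn--Hilliard part I would test \eqref{CHdiff} with $\varphi$. The transport term $(u_2\cdot\nabla\varphi,\varphi)$ vanishes by incompressibility, while $(u\cdot\nabla\varphi_1,\varphi)$ is controlled, using Poincar\'e and $\nabla\varphi_1\in L^\infty(\tau,\infty;L^4)$, by $\tfrac{\nu}{8}\Vert\nabla u\Vert^2+C_\tau\Vert\varphi\Vert^2$. Integrating by parts (the Neumann condition gives $\partial_n\widetilde\mu=0$) and expanding
\begin{equation*}
\nabla\widetilde\mu=(a+F^{\prime\prime}(\varphi_1))\nabla\varphi+\varphi\nabla a-\nabla J\ast\varphi+(F^{\prime\prime}(\varphi_2)-F^{\prime\prime}(\varphi_1))\nabla\varphi_2,
\end{equation*}
assumption (H3) gives $\big((a+F^{\prime\prime}(\varphi_1))\nabla\varphi,\nabla\varphi\big)\geq c_0\Vert\nabla\varphi\Vert^2$, the term with $\varphi\nabla a-\nabla J\ast\varphi$ is bounded by $\tfrac{c_0}{8}\Vert\nabla\varphi\Vert^2+c\Vert\varphi\Vert^2$, and the delicate term is estimated, using the $L^\infty$-bound on the $\varphi_i$ (local Lipschitzness of $F^{\prime\prime}$) and $\Vert\nabla\varphi_2\Vert_{L^4}\leq C_\tau$, by $C_\tau\Vert\varphi\Vert_{L^4}\Vert\nabla\varphi\Vert\leq\tfrac{c_0}{8}\Vert\nabla\varphi\Vert^2+C_\tau\Vert\varphi\Vert^2$ via Gagliardo--Nirenberg and Young. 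Altogether
\begin{equation*}
\frac{1}{2}\frac{d}{dt}\Vert\varphi\Vert^2+\frac{c_0}{2}\Vert\nabla\varphi\Vert^2\leq\frac{\nu}{8}\Vert\nabla u\Vert^2+C_\tau\big(\Vert u\Vert^2+\Vert\varphi\Vert^2\big).
\end{equation*}

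Finally I would add the two differential inequalities. Absorbing $\tfrac{\nu}{8}\Vert\nabla u\Vert^2$ into the Navier--Stokes dissipation leaves at least $\tfrac{\nu}{4}\Vert\nabla u\Vert^2+\tfrac{c_0}{4}\Vert\nabla\varphi\Vert^2$ on the left, while the zero-order terms collect into a single $C_\tau\big(\Vert u\Vert^2+\Vert\varphi\Vert^2\big)$ on the right, so that, with $E:=\Vert u\Vert^2+\Vert\varphi\Vert^2$,
\begin{equation*}
\frac{1}{2}\frac{d}{dt}E+\frac{\nu}{4}\Vert\nabla u\Vert^2+\frac{c_0}{4}\Vert\nabla\varphi\Vert^2\leq C_\tau E,\qquad t\geq\tau.
\end{equation*}
Dropping the dissipation and applying Gronwall's lemma on $[\tau,t]$ gives $E(t)\leq E(\tau)e^{C_\tau(t-\tau)}$, and re-inserting this bound into the integrated inequality produces exactly \eqref{expatt0} (after a routine relabelling of the constant in the exponential). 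I expect the only genuinely delicate step to be the control of $\big((F^{\prime\prime}(\varphi_2)-F^{\prime\prime}(\varphi_1))\nabla\varphi_2,\nabla\varphi\big)$: this is precisely the term that obstructs uniqueness in the variable-viscosity case (cf. Remark \ref{openuniq}), and it becomes manageable here only because the regularization of Proposition \ref{regthmtau} supplies both the $L^\infty$-bound on $\varphi_i$ and the $L^4$-bound on $\nabla\varphi_2$ for $t\geq\tau$, which is exactly why the estimate must start from a positive time rather than from $t=0$.
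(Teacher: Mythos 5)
Your proposal is correct and follows essentially the same route as the paper: multiply the Navier--Stokes difference by $u$ as in \eqref{est1}--\eqref{est4}, test the Cahn--Hilliard difference with $\varphi$ itself, expand $\nabla\widetilde\mu$ and handle $\big((F^{\prime\prime}(\varphi_2)-F^{\prime\prime}(\varphi_1))\nabla\varphi_2,\nabla\varphi\big)$ via the $L^\infty$-bound on $\varphi_i$ and $\nabla\varphi_2\in L^\infty(\tau,\infty;L^4)$ from Proposition \ref{regthmtau}, then add and apply Gronwall. The only (immaterial) differences are the choice of splitting of the convective difference ($u\cdot\nabla\varphi_1+u_2\cdot\nabla\varphi$ versus the paper's $u\cdot\nabla\varphi_2+u_1\cdot\nabla\varphi$) and the bookkeeping of the Young constants, where your version is in fact slightly more careful about retaining $\tfrac{\nu}{4}\Vert\nabla u\Vert^2$ after the two inequalities are summed.
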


\begin{proof}
Let us multiply \eqref{CHdiff} by $\varphi $ in $L^{2}(\Omega )$. We get
\begin{equation}
\frac{1}{2}\frac{d}{dt}\Vert \varphi \Vert ^{2}=-(u\cdot \nabla \varphi
_{2},\varphi )-(\nabla \widetilde{\mu },\nabla \varphi )  \label{expatt1}
\end{equation}%
Taking the gradient of $\widetilde{\mu }$, on account of \eqref{chpotdiff}
we have
\begin{align*}
& (\nabla \widetilde{\mu },\nabla \varphi )=\int_{\Omega }\big(a+F^{\prime
\prime }(\varphi _{1})\big)|\nabla \varphi |^{2}+(\varphi \nabla a-\nabla
J\ast \varphi ,\nabla \varphi ) \\
& +\big((F^{\prime \prime }(\varphi _{2})-F^{\prime \prime }(\varphi
_{1}))\nabla \varphi _{2},\nabla \varphi \big)\geq c_{0}\Vert \nabla \varphi
\Vert ^{2}-c\Vert \varphi \Vert \Vert \nabla \varphi \Vert \\
& -\Vert F^{\prime \prime }(\varphi _{2})-F^{\prime \prime }(\varphi
_{1})\Vert _{L^{4}}\Vert \nabla \varphi _{2}\Vert _{L^{4}}\Vert \nabla
\varphi \Vert \geq \frac{c_{0}}{2}\Vert \nabla \varphi \Vert ^{2}-c\Vert
\varphi \Vert ^{2}-C_{\tau }\Vert \varphi \Vert _{L^{4}}\Vert \nabla \varphi
_{2}\Vert _{L^{4}}\Vert \nabla \varphi \Vert \\
& \geq \frac{c_{0}}{2}\Vert \nabla \varphi \Vert ^{2}-c\Vert \varphi \Vert
^{2}-C_{\tau }\big(\Vert \varphi \Vert +\Vert \varphi \Vert ^{1/2}\Vert
\nabla \varphi \Vert ^{1/2}\big)\Vert \nabla \varphi _{2}\Vert _{L^{4}}\Vert
\nabla \varphi \Vert \\
& \geq \frac{c_{0}}{4}\Vert \nabla \varphi \Vert ^{2}-C_{\tau }\big(1+\Vert
\nabla \varphi _{2}\Vert _{L^{4}}^{2}+\Vert \nabla \varphi _{2}\Vert
_{L^{4}}^{4}\big)\Vert \varphi \Vert ^{2}.
\end{align*}%
Observe that
\begin{equation}
(\nabla \widetilde{\mu },\nabla \varphi )\geq \frac{c_{0}}{4}\Vert \nabla
\varphi \Vert ^{2}-C_{\tau }\big(1+\Vert \nabla \varphi _{2}\Vert
_{L^{4}}^{4}\big)\Vert \varphi \Vert ^{2}.  \label{expatt2}
\end{equation}%
Furthermore, we have
\begin{equation}
|(u\cdot \nabla \varphi _{2},\varphi )|\leq \Vert u\Vert _{L^{4}}\Vert
\nabla \varphi _{2}\Vert _{L^{4}}\Vert \varphi \Vert \leq \frac{\nu }{4}%
\Vert \nabla u\Vert ^{2}+c\Vert \nabla \varphi _{2}\Vert _{L^{4}}^{2}\Vert
\varphi \Vert ^{2}.  \label{expatt3}
\end{equation}%
Therefore, plugging \eqref{expatt2} and \eqref{expatt3} into \eqref{expatt1}%
, we get
\begin{equation*}
\frac{1}{2}\frac{d}{dt}\Vert \varphi \Vert ^{2}+\frac{c_{0}}{4}\Vert \nabla
\varphi \Vert ^{2}\leq C_{\tau }\big(1+\Vert \nabla \varphi _{2}\Vert
_{L^{4}}^{4}\big)\Vert \varphi \Vert ^{2}+\frac{\nu }{4}\Vert \nabla u\Vert
^{2}.
\end{equation*}%
Adding this last differential inequality to \eqref{est4}, we obtain
\begin{equation}
\frac{1}{2}\frac{d}{dt}\big(\Vert u\Vert ^{2}+\Vert \varphi \Vert ^{2}\big)+%
\frac{\nu }{4}\Vert \nabla u\Vert ^{2}+\frac{c_{0}}{4}\Vert \nabla \varphi
\Vert ^{2}\leq \gamma (t)\big(\Vert u\Vert ^{2}+\Vert \varphi \Vert ^{2}\big)%
,  \label{expatt4}
\end{equation}%
where
\begin{equation*}
\gamma (t):=\alpha (t)+C_{\tau }\big(1+\Vert \nabla \varphi _{2}\Vert
_{L^{4}}^{4}\big).
\end{equation*}%
%
%
%
%
%
%
%
%
%
%
%
%
%
%
%
%
%
%since we are assuming that the initial data $[u_{0i},\varphi_{0i}]$
%satisfy $u_{0i}\in V_{div}$, $\varphi_{0i}\in H^2(\Omega)$, $i=1,2$,
%and we are also assuming that $F\in C^3$ and that $J\in W^{2,1}$ then,
% \cite[Proposition 1]{FGK},
Then, thanks to Proposition \ref{regthmtau}, for every $\tau >0$ there
exists $C_{\tau }>0$ (always depending on $\tau $, $\eta $ and on the
energies $\mathcal{E}(z_{01})$, $\mathcal{E}(z_{02})$) such that the
following bounds for the solutions $z_{i}=[u_{i},\varphi _{i}]$
corresponding to $[u_{0i},\varphi _{0i}]$ hold %regularity properties
%\begin{align*}
%& u_i\in L^\infty(0,\infty;V_{div}),\qquad\varphi_i\in L^\infty(0,\infty;H^2(\Omega))\qquad 2\leq p<\infty,
%\end{align*}
%and also
\begin{align}
& \Vert u_{i}\Vert _{L^{\infty }(\tau ,\infty ;V_{div})}+\Vert \varphi
_{i}\Vert _{L^{\infty }(\tau ,\infty ;W^{1,p}(\Omega ))}\leq C_{\tau },
\label{expatt12} \\
& \Vert u_{i,t}\Vert _{L_{tb}^{2}(\tau ,\infty ;G_{div})}+\Vert \varphi
_{i,t}\Vert _{L^{\infty }(\tau ,\infty ;H)}\leq C_{\tau },  \label{expatt17}
\end{align}%
%
%
%
%
%
%
%
%
%
%
%
%
%
%
%
%
%
%where here the constat $C$ depends on $\eta$ and on the norms $\Vert\nabla u_{0i}\Vert$, $\Vert\varphi_{0i}\Vert_{H^2}$, $i=1,2$.
Thus we have $\gamma (t)\leq C_{\tau }$, for all $t\geq \tau $ and by
applying the standard Gronwall lemma to \eqref{expatt4} written for $t\geq
\tau $ we get
\begin{equation}
\Vert u(t)\Vert ^{2}+\Vert \varphi (t)\Vert ^{2}\leq \big(\Vert u(\tau
)\Vert ^{2}+\Vert \varphi (\tau )\Vert ^{2}\big)e^{C_{\tau }t},\qquad
\forall t\geq \tau .  \label{expatt5}
\end{equation}%
By integrating \eqref{expatt4} between $\tau $ and $t$ and using %
\eqref{expatt5} we get \eqref{expatt0}.
\end{proof}

\begin{lem}
\label{expattlem2} Let the assumptions of Lemma \ref{expattlem1} be
satisfied. Let $u_{0i}\in G_{div}$, $\varphi_{0i}\in H$ with $%
F(\varphi_{0i})\in L^1(\Omega)$ and $[u_i,\varphi_i]$ be the corresponding
weak solutions, $i=1,2$. Then, for every $\tau>0$ there exists $C_\tau>0$
%a positive constant
%$C_\tau=C_\tau\big(\mathcal{E}(z_{01}),\mathcal{E}(z_{02}),\eta\big)$
such that we have {\small
\begin{align}
&\Vert u_2(t)-u_1(t)\Vert^2+\Vert\varphi_2(t)-\varphi_1(t)\Vert^2 +\Big|%
\int_{\Omega}F\big(\varphi_2(t)\big)-\int_{\Omega}F\big(\varphi_1(t)\big)%
\Big|^2  \notag \\
& \leq C_\tau \big(\Vert
u_2(\tau)-u_1(\tau)\Vert^2+\Vert\varphi_2(\tau)-\varphi_1(\tau)\Vert^2\big) %
e^{-kt}  \notag \\
&+C_\tau\int_\tau^t\big(\Vert
u_2(s)-u_1(s)\Vert^2+\Vert\varphi_2(s)-\varphi_1(s)\Vert^2\big)%
ds,\qquad\forall t\geq\tau.  \label{expatt8}
\end{align}
}
%where $N$ is a positive constant depending on the norms $\Vert\nabla u_{0i}\Vert$, $\Vert\varphi_{0i}\Vert_{H^2}$, $i=1,2$
%of the initial data and on $\eta$, with $\eta>0$ such that $|\overline{\varphi}_{0i}|\leq\eta$, $i=1,2$.
\end{lem}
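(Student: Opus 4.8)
The plan is to resume from the differential inequality \eqref{expatt4} already derived in the proof of Lemma \ref{expattlem1}, namely
\begin{equation*}
\frac{1}{2}\frac{d}{dt}\big(\Vert u\Vert^2+\Vert\varphi\Vert^2\big)+\frac{\nu}{4}\Vert\nabla u\Vert^2+\frac{c_0}{4}\Vert\nabla\varphi\Vert^2\leq\gamma(t)\big(\Vert u\Vert^2+\Vert\varphi\Vert^2\big),
\end{equation*}
where, thanks to Proposition \ref{regthmtau}, we have $\gamma(t)\leq C_\tau$ for all $t\geq\tau$. Writing $Y:=\Vert u\Vert^2+\Vert\varphi\Vert^2$, the first step is to convert the dissipation into a genuine decay term. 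Since $u\in V_{div}$ obeys Poincaré's inequality and $\varphi-\overline{\varphi}$ has zero mean (so the Poincaré--Wirtinger inequality applies), the dissipation on the left-hand side controls $k(\Vert u\Vert^2+\Vert\varphi-\overline{\varphi}\Vert^2)$ for some $k>0$; recalling that the mean $\overline{\varphi}=\overline{\varphi_2}-\overline{\varphi_1}$ is conserved by the Cahn--Hilliard dynamics, this yields
\begin{equation*}
\frac{1}{2}\frac{d}{dt}Y+kY\leq\gamma(t)\,Y+k|\Omega|\,\overline{\varphi}^{\,2}.
\end{equation*}

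I would then multiply by $e^{2kt}$, integrate over $(\tau,t)$, divide back, and use $\gamma\leq C_\tau$ together with $e^{-2k(t-s)}\leq1$ to obtain
\begin{equation*}
Y(t)\leq e^{-2k(t-\tau)}Y(\tau)+2C_\tau\int_\tau^t Y(s)\,ds+|\Omega|\,\overline{\varphi}^{\,2},\qquad\forall\,t\geq\tau.
\end{equation*}
The main obstacle is precisely the non-decaying constant $|\Omega|\overline{\varphi}^{\,2}$ coming from the conserved (and possibly nonzero) difference of the spatial means of the two solutions: a straightforward Poincaré argument cannot force the full $\Vert\varphi\Vert^2$ to decay. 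To absorb it I would exploit the pointwise bound $|\Omega|\overline{\varphi}^{\,2}\leq\Vert\varphi(s)\Vert^2\leq Y(s)$, valid for every $s$ because $\overline{\varphi}$ is the conserved mean. Splitting into the two regimes $\tau\leq t\leq\tau+1$ and $t\geq\tau+1$, one checks that in the first regime $|\Omega|\overline{\varphi}^{\,2}\leq Y(\tau)\leq C_\tau Y(\tau)e^{-kt}$ (enlarging $C_\tau$ by $e^{k(\tau+1)}$), while in the second $|\Omega|\overline{\varphi}^{\,2}\leq\int_\tau^t Y(s)\,ds$; hence in all cases $|\Omega|\overline{\varphi}^{\,2}\leq C_\tau Y(\tau)e^{-kt}+C_\tau\int_\tau^t Y(s)\,ds$. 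Folding the fixed factor $e^{2k\tau}$ into $C_\tau$ then gives the estimate for $\Vert u(t)\Vert^2+\Vert\varphi(t)\Vert^2$ in the form required by \eqref{expatt8}.

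It remains to control the potential term. After time $\tau$ the two solutions are uniformly bounded in $L^\infty(\Omega)$ by \eqref{sup-norm}, so by the mean value theorem $F(\varphi_2)-F(\varphi_1)=F'(\xi)(\varphi_2-\varphi_1)$ with $|\xi|\leq\Gamma_\tau$, whence $|F'(\xi)|\leq C_\tau$ and
\begin{equation*}
\Big|\int_\Omega F(\varphi_2(t))-\int_\Omega F(\varphi_1(t))\Big|^2\leq C_\tau\Vert\varphi(t)\Vert_{L^1}^2\leq C_\tau\Vert\varphi(t)\Vert^2\leq C_\tau Y(t).
\end{equation*}
Since $Y(t)$ has already been bounded by the right-hand side of \eqref{expatt8}, adding this contribution and relabelling constants completes the proof. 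I expect the only genuinely delicate point to be the bookkeeping around the conserved mean described above; every remaining step is a routine application of Poincaré-type inequalities, Gronwall's lemma, and the uniform regularity bounds of Proposition \ref{regthmtau}.
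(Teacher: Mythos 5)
Your argument is correct and follows essentially the same route as the paper: start from \eqref{expatt4}, use the Poincar\'e and Poincar\'e--Wirtinger inequalities to produce the decay rate $k$, control the resulting mean term via $|\Omega|\,\overline{\varphi}^{\,2}\leq\Vert\varphi\Vert^2$, apply Gronwall, and finally bound the difference of the potential integrals by $C_\tau\Vert\varphi(t)\Vert$ using the uniform $L^\infty$ bounds valid for $t\geq\tau$. The only (immaterial) difference is that the paper absorbs $\frac{c_0|\Omega|}{2c_\Omega}\overline{\varphi}^{\,2}\leq\frac{c_0}{2c_\Omega}\Vert\varphi(t)\Vert^2$ directly into the Gronwall coefficient $C_\tau$ at the level of the differential inequality, which avoids your case split between $t\leq\tau+1$ and $t\geq\tau+1$.
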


\begin{proof}
By using the Poincaré inequality for $u$ and the Poincaré-Wirtinger inequality
for $\varphi $, i.e.,
\begin{equation}
\lambda _{1}\Vert u\Vert ^{2}\leq \Vert \nabla u\Vert ^{2},\qquad \Vert
\varphi -\overline{\varphi }\Vert ^{2}\leq c_{\Omega }\Vert \nabla \varphi
\Vert ^{2},  \label{PWineq}
\end{equation}%
from \eqref{expatt4} we have
\begin{equation*}
\frac{d}{dt}\big(\Vert u\Vert ^{2}+\Vert \varphi \Vert ^{2}\big)+\frac{\nu
\lambda _{1}}{2}\Vert u\Vert ^{2}+\frac{c_{0}}{2c_{\Omega }}\Vert \varphi
\Vert ^{2}\leq 2\gamma (t)\big(\Vert u\Vert ^{2}+\Vert \varphi \Vert ^{2}%
\big)+\frac{c_{0}|\Omega |}{2c_{\Omega }}\overline{\varphi }^{2},
\end{equation*}%
which yields
\begin{equation}
\frac{d}{dt}\big(\Vert u\Vert ^{2}+\Vert \varphi \Vert ^{2}\big)+k\big(\Vert
u\Vert ^{2}+\Vert \varphi \Vert ^{2}\big)\leq C_{\tau }\big(\Vert u\Vert
^{2}+\Vert \varphi \Vert ^{2}\big),  \label{expatt7}
\end{equation}%
where $k:=\min (\lambda _{1}\nu ,c_{0}/c_{\Omega })/2$ and $C_{\tau }$ is a
positive constant such that $2\gamma (t)+c_{0}/2c_{\Omega }\leq C_{\tau }$
for all $t\geq \tau $. %(see \eqref{expatt6}).
By using Gronwall's lemma we immediately see from \eqref{expatt7} that $%
\Vert u\Vert ^{2}+\Vert \varphi \Vert ^{2}$ is controlled by the right-hand
side of \eqref{expatt8}. Furthermore, we also have
\begin{equation*}
\Big|\int_{\Omega }F\big(\varphi _{2}(t)\big)-\int_{\Omega }F\big(\varphi
_{1}(t)\big)\Big|\leq C_{\tau }\Vert \varphi (t)\Vert ,\qquad \forall t\geq
\tau .
\end{equation*}%
%
%
%
%
%
%
%
%
%
%
%
%
%
%
%
%
%
%where $N$ is a positive constant depending on $\eta$ and on the norms $\Vert\nabla u_{0i}\Vert$, %$\Vert\varphi_{0i}\Vert_{H^2}$, $i=1,2$,
Hence, the proof of \eqref{expatt8} is complete.
\end{proof}

\begin{lem}
\label{expattlem3} Let the assumptions of Lemma \ref{expattlem1} be
satisfied.
%Assume that $J\in W^{2,1}(\mathbb{R}^2)$ and that $F\in C^3(\mathbb{R})$.
Let $u_{0i}\in G_{div}$, $\varphi_{0i}\in H$ with $F(\varphi_{0i})\in
L^1(\Omega)$ and $[u_i,\varphi_i]$ be the corresponding weak solutions, $%
i=1,2$. Then, for every $\tau>0$ there exists $C_\tau>0$
%a positive constant
%$C_\tau=C_\tau\big(\mathcal{E}(z_{01}),\mathcal{E}(z_{02}),\eta\big)$
such that
\begin{align}
&\Vert u_{2,t}-u_{1,t}\Vert_{L^2(\tau,t;V_{div}^{\prime})}^2
+\Vert\varphi_{2,t}-\varphi_{1,t}\Vert_{L^2(\tau,t;D(B_N)^{\prime})}^2
\notag \\
&\leq C_\tau e^{C_\tau t}\big(\Vert
u_2(\tau)-u_1(\tau)\Vert^2+\Vert\varphi_2(\tau)-\varphi_1(\tau)\Vert^2\big)%
,\qquad\forall t\geq\tau.  \label{expatt16}
\end{align}
\end{lem}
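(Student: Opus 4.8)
The plan is to estimate each time derivative in the appropriate dual norm by duality, i.e. by testing the difference equations \eqref{CHdiff}--\eqref{NSdiff} against arbitrary elements of $V_{div}$ and $D(B_N)$, respectively, then integrating in time and closing the bound by means of \eqref{expatt0} of Lemma \ref{expattlem1}. Throughout I work in the autonomous setting of this section, so the two solutions share the same forcing and $h$ drops out of \eqref{NSdiff}. The two ingredients I would rely on are the uniform higher-regularity bounds \eqref{expatt12}--\eqref{expatt17} for $u_i,\varphi_i$ on $(\tau,\infty)$ furnished by Proposition \ref{regthmtau} (which turn every quantity built from the individual solutions into a constant $C_\tau$), and the already proved integrated estimate \eqref{expatt0}, which controls $\int_\tau^t(\Vert\nabla u\Vert^2+\Vert\nabla\varphi\Vert^2)\,ds$ together with $\sup_{[\tau,t]}(\Vert u\Vert^2+\Vert\varphi\Vert^2)$ by $e^{C_\tau t}(\Vert u(\tau)\Vert^2+\Vert\varphi(\tau)\Vert^2)$.

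For $u_t$ I would test \eqref{NSdiff} with $v\in V_{div}$; the pressure disappears and, writing the convective difference as $b(u,u_2,v)+b(u_1,u,v)$, the two-dimensional estimate for $b$ gives both terms $\le C_\tau\Vert u\Vert^{1/2}\Vert\nabla u\Vert^{1/2}\Vert v\Vert_{V_{div}}$ on account of $\Vert\nabla u_i\Vert\le C_\tau$. The three Korteweg terms $I_1,I_2,I_3$ are handled exactly as in \eqref{est1}--\eqref{est3} (integrating by parts against the divergence-free $v$ in $I_2,I_3$), each being bounded by $C_\tau\Vert\varphi\Vert\Vert v\Vert_{V_{div}}$ since $\Vert\varphi_i\Vert_{L^4}\le C_\tau$. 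Together with the viscous term $\nu(\nabla u,\nabla v)$ this yields the pointwise-in-time bound $\Vert u_t\Vert_{V_{div}'}\le\nu\Vert\nabla u\Vert+C_\tau\big(\Vert u\Vert^{1/2}\Vert\nabla u\Vert^{1/2}+\Vert\varphi\Vert\big)$.

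For $\varphi_t$ the decisive point — and the reason for using the weaker space $D(B_N)'$ rather than $V'$ — is to avoid any gradient of $\varphi$ and any second-order norm of $u$. Testing \eqref{CHdiff} with $\psi\in D(B_N)$ and using the Neumann conditions satisfied by both $\widetilde\mu$ (see \eqref{chpotdiff}) and $\psi$, I would move both derivatives of $\Delta\widetilde\mu$ onto the test function, so that only $\Vert\widetilde\mu\Vert\le C_\tau\Vert\varphi\Vert$ appears (here $F'$ is locally Lipschitz and the $\varphi_i$ are bounded). The two transport terms are integrated by parts once, using $\mathrm{div}\,u=\mathrm{div}\,u_2=0$ and the planar embedding $D(B_N)\hookrightarrow H^2\hookrightarrow W^{1,4}$, giving $|(u\cdot\nabla\varphi_1,\psi)|\le C_\tau\Vert u\Vert_{L^4}\Vert\psi\Vert_{D(B_N)}$ and $|(u_2\cdot\nabla\varphi,\psi)|\le C_\tau\Vert\varphi\Vert\Vert\psi\Vert_{D(B_N)}$. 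Hence $\Vert\varphi_t\Vert_{D(B_N)'}\le C_\tau\big(\Vert\varphi\Vert+\Vert u\Vert^{1/2}\Vert\nabla u\Vert^{1/2}\big)$.

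Finally I would square the two pointwise bounds, use $\Vert u\Vert^{1/2}\Vert\nabla u\Vert^{1/2}\le\tfrac12(\Vert u\Vert^2+\Vert\nabla u\Vert^2)$, and integrate over $(\tau,t)$, obtaining that the left-hand side of \eqref{expatt16} is dominated by $C_\tau\int_\tau^t(\Vert\nabla u\Vert^2+\Vert u\Vert^2+\Vert\varphi\Vert^2)\,ds$. Invoking \eqref{expatt0} to control this integral (the $\Vert u\Vert^2,\Vert\varphi\Vert^2$ contributions through $(t-\tau)\sup_{[\tau,t]}$, absorbed into the exponential) produces exactly the right-hand side of \eqref{expatt16}. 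The only genuinely delicate point is the bookkeeping of norms in the $\varphi_t$ estimate: the choice of $D(B_N)'$ is precisely what lets the whole bound be closed with \eqref{expatt0} alone, without appealing to control of $\nabla\varphi$ or of the difference $u$ in $H^2$, neither of which is available with constants uniform in the data.
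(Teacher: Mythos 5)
Your proposal is correct and follows essentially the same route as the paper: dual-norm estimates for $u_t$ and $\varphi_t$ obtained by testing the difference equations against $V_{div}$ and $D(B_N)$ respectively, moving the Laplacian onto the test function so that only $\Vert\widetilde{\mu}\Vert\leq C_\tau\Vert\varphi\Vert$ appears, integrating the transport terms by parts via the divergence-free condition, invoking the uniform bounds of Proposition \ref{regthmtau} on $(\tau,\infty)$, and closing with \eqref{expatt0}. The only blemish is the line ``$\Vert u\Vert^{1/2}\Vert\nabla u\Vert^{1/2}\le\tfrac12(\Vert u\Vert^2+\Vert\nabla u\Vert^2)$'', which should read $\Vert u\Vert\Vert\nabla u\Vert\le\tfrac12(\Vert u\Vert^2+\Vert\nabla u\Vert^2)$ after squaring; this is a typographical slip, not a gap.
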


\begin{proof}
Consider the variational formulation of \eqref{CHdiff} and \eqref{chpotdiff}%
, namely,
\begin{equation}
\langle \varphi _{t},\psi \rangle =-(\nabla \widetilde{\mu },\nabla \psi
)-(u\cdot \nabla \varphi _{1},\psi )-(u_{2}\cdot \nabla \varphi ,\psi
),\qquad \forall \psi \in V,  \label{expatt11}
\end{equation}%
and take $\psi \in D(B_{N})$. Then, for every $\tau >0$ we see that there
exists $C_{\tau }>0$ such that
\begin{equation}
|(\nabla \widetilde{\mu },\nabla \psi )|=|(\widetilde{\mu },B_{N}\psi )|\leq
\Vert \widetilde{\mu }\Vert \Vert \psi \Vert _{D(B_{N})}\leq C_{\tau }\Vert
\varphi \Vert \Vert \psi \Vert _{D(B_{N})},\qquad \forall t\geq \tau .
\label{expatt9}
\end{equation}%
Moreover, we have
\begin{equation*}
|(u\cdot \nabla \varphi _{1},\psi )|=|(u\cdot \nabla \psi ,\varphi _{1})\leq
c\Vert \nabla u\Vert \Vert \varphi _{1}\Vert \Vert \psi \Vert
_{D(B_{N})}\leq C\Vert \nabla u\Vert \Vert \psi \Vert _{D(B_{N})},
\end{equation*}%
where in this case it is enough to use the dissipative estimate %
\eqref{dissest} and therefore the constant $C$ does not depend on $\tau $
but depends on $h$, $\mathcal{E}(z_{01})$ and $\eta $ only. Concerning the
last term on the right-hand side of \eqref{expatt11} we have
\begin{equation}
|(u_{2}\cdot \nabla \varphi ,\psi )|=|(u_{2}\cdot \nabla \psi ,\varphi
)|\leq c\Vert \nabla u_{2}\Vert \Vert \varphi \Vert \Vert \psi \Vert
_{D(B_{N})}\leq C_{\tau }\Vert \varphi \Vert \Vert \psi \Vert
_{D(B_{N})},\qquad \forall t\geq \tau .  \label{expatt10}
\end{equation}%
Plugging \eqref{expatt9}--\eqref{expatt10} into \eqref{expatt11}, we get
\begin{equation}
\Vert \varphi _{t}\Vert _{D(B_{N})^{\prime }}\leq C_{\tau }\big(\Vert
\varphi \Vert +\Vert \nabla u\Vert \big),\qquad \forall t\geq \tau .
\label{diss-est5}
\end{equation}%
Therefore, taking also \eqref{expatt0} into account, we have
\begin{equation}
\Vert \varphi _{t}\Vert _{L^{2}(\tau ,t;D(B_{N})^{\prime })}\leq C_{\tau
}e^{C_{\tau }t}\big(\Vert u(\tau )\Vert +\Vert \varphi (\tau )\Vert \big)%
,\qquad \forall t\geq \tau .  \label{expatt14bis}
\end{equation}

In order to obtain an estimate for $u_{2,t}-u_{1,t}$ let us consider the
difference of the Navier-Stokes equations written for two weak solutions in
the variational formulation, i.e.,
\begin{align}
& \langle u_{t},v\rangle =-\nu (\nabla u,\nabla
v)-b(u_{2},u_{2},v)+b(u_{1},u_{1},v)  \notag \\
& -\frac{1}{2}\big(\nabla a\varphi (\varphi _{1}+\varphi _{2}),v\big)-\big(%
(J\ast \varphi )\nabla \varphi _{2},v\big)-\big((J\ast \varphi _{2})\nabla
\varphi ,v\big),\qquad \forall v\in V_{div}.  \label{expatt13}
\end{align}%
Thanks to \eqref{expatt12} the last three terms on the right-hand side can
be easily estimated as follows
\begin{align*}
& \frac{1}{2}\big|\big(\nabla a\varphi (\varphi _{1}+\varphi _{2}),v\big)%
\big|\leq c\Vert \nabla a\Vert _{L^{\infty }}\Vert \varphi \Vert \Vert
\varphi _{1}+\varphi _{2}\Vert _{L^{\infty }}\Vert v\Vert \leq C_{\tau
}\Vert \varphi \Vert \Vert v\Vert _{V_{div}}, \\
& \big|\big((J\ast \varphi )\nabla \varphi _{2},v\big)\big|=\big|\big(%
(\nabla J\ast \varphi )\varphi _{2},v\big)\big|\leq c\Vert \nabla J\Vert
_{L^{1}}\Vert \varphi \Vert \Vert \varphi _{2}\Vert _{L^{\infty }}\Vert
v\Vert \leq C_{\tau }\Vert \varphi \Vert \Vert v\Vert _{V_{div}}, \\
& \big|\big((J\ast \varphi _{2})\nabla \varphi ,v\big)\big|=\big|\big(%
(\nabla J\ast \varphi _{2})\varphi ,v\big)\big|\leq c\Vert \nabla J\Vert
_{L^{1}}\Vert \varphi _{2}\Vert _{L^{\infty }}\Vert \varphi \Vert \Vert
v\Vert \leq C_{\tau }\Vert \varphi \Vert \Vert v\Vert _{V_{div}},
\end{align*}%
for all $t\geq \tau $. Furthermore, the trilinear form can be controlled by
using \eqref{standest2D}, that is, %\begin{align*}
%&|b(u_2,u_2,v)-b(u_1,u_1,v)|\nonumber\\
%&\leq c\big(\Vert u_2\Vert^{1/2}\Vert\nabla u_2\Vert^{1/2}
%\Vert u\Vert^{1/2}\Vert\nabla u\Vert^{1/2}+\Vert u\Vert^{1/2}\Vert\nabla u\Vert^{1/2}
%\Vert u_1\Vert^{1/2}\Vert\nabla u_1\Vert^{1/2}\big)\Vert v\Vert\nonumber\\
%&\leq C_m\big(\Vert\nabla u_1\Vert^{1/2}+\Vert\nabla u_2\Vert^{1/2}\big)\Vert\nabla u\Vert^{1/2}\Vert v\Vert
%\end{align*}
\begin{align*}
& |b(u_{2},u_{2},v)-b(u_{1},u_{1},v)|=|b(u_{2},u,v)+b(u,u_{1},v)| \\
& \leq c\big(\Vert \nabla u_{1}\Vert +\Vert \nabla u_{2}\Vert \big)\Vert
\nabla u\Vert \Vert \nabla v\Vert \leq C_{\tau }\Vert \nabla u\Vert \Vert
\nabla v\Vert ,\qquad \forall t\geq \tau .
\end{align*}%
Combining the last four estimates with \eqref{expatt13} we obtain
%\begin{align*}
%&\Vert u_t\Vert_{V_{div}'}
%\leq\nu\Vert\nabla u\Vert+C\big(\Vert\nabla u_1\Vert^{1/2}+\Vert\nabla u_2\Vert^{1/2}\big)\Vert\nabla u\Vert^{1/2}
%+C_{m,\tau}\Vert\varphi\Vert,\qquad\forall t\geq\tau,
%\end{align*}
\begin{equation*}
\Vert u_{t}\Vert _{V_{div}^{\prime }}\leq C_{\tau }\big(\Vert \nabla u\Vert
+\Vert \varphi \Vert \big),\qquad \forall t\geq \tau ,
\end{equation*}%
Thus, recalling \eqref{expatt0}, we deduce %\begin{align}
%&\Vert u_t\Vert_{L^2(\tau,t;V_{div}')}
%\leq\nu\Vert u\Vert_{L^2(\tau,t;V_{div})}\nonumber\\
%&+C_m\big(\Vert u_1\Vert_{L^2(\tau,t;V_{div})}^{1/2}
%+\Vert u_2\Vert_{L^2(\tau,t;V_{div})}^{1/2}\big)\Vert u\Vert_{L^2(\tau,t;V_{div})}^{1/2}
%+C_{m,\tau}\Vert\varphi\Vert_{L^2(\tau,t;H)},\qquad\forall t\geq\tau>0.
%\end{align}
\begin{equation}
\Vert u_{t}\Vert _{L^{2}(\tau ,t;V_{div}^{\prime })}\leq C_{\tau }e^{C_{\tau
}t}\big(\Vert u(\tau )\Vert +\Vert \varphi (\tau )\Vert \big),\qquad \forall
t\geq \tau .  \label{expatt15}
\end{equation}%
Finally, \eqref{expatt14bis} and \eqref{expatt15} yield \eqref{expatt16}.
\end{proof}

\begin{lem}
\label{expattlem4} Let the assumptions of Lemma \ref{expattlem1} be
satisfied.
%Assume that $J\in W^{2,1}(\mathbb{R}^2)$ and that $F\in C^3(\mathbb{R})$.
Let $u_{0i}\in G_{div}$, $\varphi_{0i}\in H$ with $F(\varphi_{0i})\in
L^1(\Omega)$
% Let $u_{0i}\in V_{div}$, $\varphi_{0i}\in H^2(\Omega)$ and $t_i\in[0,T]$,
%$[u_i,\varphi_i]$ be the corresponding (strong) solutions,
$i=1,2$. Then, for every $\tau>0$ and every $T>0$ there exists $C_{\tau,T}>0$
depending also on $T$ such that
\begin{align}
&\rho_{\mathcal{X}_\eta}(S_\eta(t_2)z_{02},S_\eta(t_1)z_{01})\leq C_{\tau,T}%
\big(\rho_{\mathcal{X}_\eta}(S_\eta(\tau)z_{02},S_\eta(%
\tau)z_{01})+|t_2-t_1|^{1/2}\big),  \label{expatt18}
\end{align}
for all $t_1,t_2\in [\tau,\tau+T]$, where $z_{0i}:=[u_{0i},\varphi_{0i}]$, $%
i=1,2$.
\end{lem}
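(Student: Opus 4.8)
The plan is to reduce the claim to two ingredients that are already available: the continuous dependence estimates of Lemmas \ref{expattlem1} and \ref{expattlem2}, which compare two trajectories at a common time, and a H\"older-in-time estimate for a single trajectory. Writing $z_i(t):=S_\eta(t)z_{0i}=[u_i(t),\varphi_i(t)]$ and using the triangle inequality for the metric $\rho_{\mathcal{X}_\eta}$, I would split
\[
\rho_{\mathcal{X}_\eta}\big(z_2(t_2),z_1(t_1)\big)\leq \rho_{\mathcal{X}_\eta}\big(z_2(t_2),z_1(t_2)\big)+\rho_{\mathcal{X}_\eta}\big(z_1(t_2),z_1(t_1)\big),
\]
so that the first term measures continuous dependence on the data at the common time $t_2$, while the second is the modulus of continuity in time of the single trajectory issued from $z_{01}$.

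For the first term I would invoke Lemma \ref{expattlem1}: taking square roots in \eqref{expatt0} and using $\sqrt{a^2+b^2}\leq a+b$ gives $\|u_2(t_2)-u_1(t_2)\|+\|\varphi_2(t_2)-\varphi_1(t_2)\|\leq C_{\tau,T}\big(\|u_2(\tau)-u_1(\tau)\|+\|\varphi_2(\tau)-\varphi_1(\tau)\|\big)$, the exponential factor $e^{C_\tau t_2}$ being absorbed into a constant $C_{\tau,T}$ on $[\tau,\tau+T]$. Lemma \ref{expattlem2} then controls the remaining $F$-term of $\rho_{\mathcal{X}_\eta}$ by $C_\tau\|\varphi_2(t_2)-\varphi_1(t_2)\|$, and since $\|u_2(\tau)-u_1(\tau)\|+\|\varphi_2(\tau)-\varphi_1(\tau)\|\leq\rho_{\mathcal{X}_\eta}(z_2(\tau),z_1(\tau))$, the whole first term is bounded by $C_{\tau,T}\,\rho_{\mathcal{X}_\eta}\big(S_\eta(\tau)z_{02},S_\eta(\tau)z_{01}\big)$.

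For the second term I would exploit the parabolic smoothing of Proposition \ref{regthmtau}. Since $u_{1,t}\in L^2_{tb}(\tau,\infty;G_{div})$, the fundamental theorem of calculus and the Cauchy--Schwarz inequality give $\|u_1(t_2)-u_1(t_1)\|\leq\|u_{1,t}\|_{L^2(\tau,\tau+T;G_{div})}\,|t_2-t_1|^{1/2}\leq C_{\tau,T}|t_2-t_1|^{1/2}$, which is where the H\"older exponent $1/2$ originates. Because $\varphi_{1,t}\in L^\infty(\tau,\infty;H)$, the bound $\|\varphi_1(t_2)-\varphi_1(t_1)\|\leq\|\varphi_{1,t}\|_{L^\infty(\tau,\infty;H)}\,|t_2-t_1|$ is in fact Lipschitz, hence dominated by $C_{\tau,T}|t_2-t_1|^{1/2}$ on $[\tau,\tau+T]$. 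For the remaining term I would use the chain rule, writing $F^\prime(\varphi_1)=\mu_1-a\varphi_1+J\ast\varphi_1$, to obtain
\[
\frac{d}{dt}\int_\Omega F(\varphi_1)=\int_\Omega\big(\mu_1-a\varphi_1+J\ast\varphi_1\big)\varphi_{1,t},
\]
and then bound the right-hand side uniformly on $[\tau,\infty)$ by $C_\tau$, using that $\varphi_1$ (hence $\mu_1$, via $\|\mu_1\|_{L^\infty}\leq\Gamma_\tau$ as in the proof of Proposition \ref{regthmtau}) is bounded while $\varphi_{1,t}\in L^\infty(\tau,\infty;H)$. This yields $\big|\int_\Omega F(\varphi_1(t_2))-\int_\Omega F(\varphi_1(t_1))\big|\leq C_\tau|t_2-t_1|\leq C_{\tau,T}|t_2-t_1|^{1/2}$.

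I expect the main obstacle to be precisely the justification of this last computation, namely that $t\mapsto\int_\Omega F(\varphi_1(t))$ is absolutely continuous with the stated derivative; this is guaranteed by the regularity $\varphi_{1,t}\in L^2(\tau,\infty;V)$, the $L^\infty$-boundedness of $\mu_1$, and the assumption $F\in C^3(\mathbb{R})$. Collecting the three bounds for the single trajectory gives $\rho_{\mathcal{X}_\eta}\big(z_1(t_2),z_1(t_1)\big)\leq C_{\tau,T}|t_2-t_1|^{1/2}$, and combining this with the estimate for the first term yields \eqref{expatt18}.
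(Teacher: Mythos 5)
Your proposal is correct and follows essentially the same route as the paper: the same triangle-inequality splitting into a continuous-dependence term at the common time $t_2$ (handled via Lemmas \ref{expattlem1} and \ref{expattlem2}) and a H\"older-in-time term for a single trajectory (handled via the bounds $u_{1,t}\in L^2_{tb}(\tau,\infty;G_{div})$ and $\varphi_{1,t}\in L^\infty(\tau,\infty;H)$ from Proposition \ref{regthmtau}). The only cosmetic difference is that the paper bounds the $F$-integral increment directly by $C_\tau\Vert\varphi_1(t_2)-\varphi_1(t_1)\Vert$ using the $L^\infty$ bound on $\varphi_1$ and the local Lipschitz continuity of $F$, rather than differentiating $t\mapsto\int_\Omega F(\varphi_1(t))$ as you do; both are valid.
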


\begin{proof}
Setting $S_\eta(t)z_{0i}:=[u_i(t),\varphi_i(t)]$, $i=1,2$, we have {\small
\begin{align}
&\rho_{\mathcal{X}_\eta}(S_\eta(t_2)z_{01},S_\eta(t_1)z_{01})  \notag \\
&=\Vert u_1(t_2)-u_1(t_1)\Vert+\Vert\varphi_1(t_2)-\varphi_1(t_1)\Vert +\Big|%
\int_\Omega F(\varphi_1(t_2))-\int_\Omega F(\varphi_1(t_1))\Big|  \notag \\
&\leq\Vert u_{1,t}\Vert_{L^2(t_1,t_2;G_{div})}|t_2-t_1|^{1/2}
+\Vert\varphi_{1,t}\Vert_{L^\infty(\tau,\infty;H)}|t_2-t_1|+C_\tau\Vert%
\varphi_{1,t}\Vert_{L^\infty(\tau,\infty;H)}|t_2-t_1|  \notag \\
&\leq C_{\tau,T}|t_2-t_1|^{1/2},\qquad\forall t_1,t_2\in[\tau,\tau+T],
\label{expatt19}
\end{align}
} where we have used \eqref{expatt17}. Furthermore we have {\small
\begin{align}
&\rho_{\mathcal{X}_\eta}(S_\eta(t_2)z_{02},S_\eta(t_2)z_{01})  \notag \\
&= \Vert u_2(t_2)-u_1(t_2)\Vert+\Vert\varphi_2(t_2)-\varphi_1(t_2)\Vert +%
\Big|\int_\Omega F(\varphi_2(t_2))-\int_\Omega F(\varphi_1(t_2))\Big|  \notag
\\
&\leq C_\tau e^{C_\tau(\tau+T)}\big(\Vert
u_2(\tau)-u_1(\tau)\Vert+\Vert\varphi_2(\tau)-\varphi_1(\tau)\Vert\big) \leq
C_{\tau,T}\rho_{\mathcal{X}_\eta}(S_\eta(\tau)z_{02},S_\eta(\tau)z_{01}).
\label{expatt20}
\end{align}
} From \eqref{expatt19} and \eqref{expatt20} we get \eqref{expatt18}.
\end{proof}

We now recall the following abstract result on the existence of exponential
attractors \cite[Proposition 3.1]{EZ}. This result, together with the lemmas
above, will be used to prove Theorem \ref{expattthm}.

\begin{prop}
\label{expattr} Let $\mathcal{H}$ be a metric space (with metric $\rho_{%
\mathcal{H}}$) and let $\mathcal{V},\mathcal{V}_1$ be two Banach spaces such
that the embedding $\mathcal{V}_1 \hookrightarrow\mathcal{V}$
is compact. Let $\mathbb{B}$ be a bounded subset of $\mathcal{H}$ and let $%
\mathcal{S}:\mathbb{B}\to\mathbb{B}$ be a map such that
\begin{align}
& \rho_{\mathcal{H}}\big(\mathcal{S}w_{02},\mathcal{S}w_{01}\big)\leq \gamma
\rho_{\mathcal{H}}(w_{02},w_{01})+K\Vert\mathcal{T}w_{02}-\mathcal{T}%
w_{01}\Vert_{\mathcal{V}}, \qquad\forall w_{01},w_{02}\in\mathbb{B},
\label{contrass}
\end{align}
where $\gamma\in (0,\frac{1}{2})$, $K\geq 0$ and $\mathcal{T}:\mathbb{B}\to\mathcal{V}_1$
is a globally Lipschitz continuous map, i.e.,
\begin{align}
\Vert\mathcal{T}w_{02}-\mathcal{T}w_{01}\Vert_{\mathcal{V}_1}\leq L \rho_{%
\mathcal{H}}(w_{02},w_{01}), \qquad\forall w_{01},w_{02}\in\mathbb{B},
\label{lipschass}
\end{align}
for some $L\geq 0$. Then, there exists a (discrete) exponential attractor $%
\mathcal{M}_d\subset\mathbb{B}$ for the (time discrete) semigroup $\{%
\mathcal{S}^n\}_{n=0,1,2,\dots}$ on $\mathbb{B}$ (with the topology of $%
\mathcal{H}$ induced on $\mathbb{B}$).
\end{prop}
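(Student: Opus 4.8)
The plan is to prove this by the classical covering (compactness) construction for exponential attractors, using the smoothing estimate \eqref{contrass} in place of the usual squeezing property. First I would record a consequence that is needed only for the dimension count: since $\mathcal{V}_1\hookrightarrow\mathcal{V}$ is in particular continuous, \eqref{lipschass} gives $\Vert\mathcal{T}w_{02}-\mathcal{T}w_{01}\Vert_{\mathcal{V}}\le C_0L\,\rho_{\mathcal{H}}(w_{02},w_{01})$, where $C_0$ is the embedding constant, so that \eqref{contrass} yields the global Lipschitz bound $\rho_{\mathcal{H}}(\mathcal{S}w_{02},\mathcal{S}w_{01})\le(\gamma+C_0KL)\,\rho_{\mathcal{H}}(w_{02},w_{01})$ on $(\mathbb{B},\rho_{\mathcal{H}})$.

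The heart of the argument is a covering lemma. Fix any $\theta\in(\gamma,\tfrac12)$ and set $\varrho:=(\theta-\gamma)/(2K)$. I claim there is an integer $N=N(\theta,\gamma,K,L)\ge 1$ such that the image under $\mathcal{S}$ of any subset $S\subset\mathbb{B}$ of $\rho_{\mathcal{H}}$-diameter $d$ can be covered by $N$ balls of $\mathcal{H}$-radius $\theta d$. Indeed, by \eqref{lipschass} the set $\mathcal{T}(S)$ has $\mathcal{V}_1$-diameter at most $Ld$; since $\mathcal{V}_1\hookrightarrow\mathcal{V}$ is compact, any ball of $\mathcal{V}_1$-radius $Ld$ is covered by a number of $\mathcal{V}$-balls of radius $\varrho d$ that, by linearity of the embedding, depends only on the ratio $L/\varrho$ and not on $d$; call this number $N$. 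Pulling the cover back to a partition $S=\bigcup_{j=1}^N S_j$ with $\Vert\mathcal{T}w-\mathcal{T}w'\Vert_{\mathcal{V}}\le 2\varrho d$ whenever $w,w'\in S_j$, estimate \eqref{contrass} gives $\rho_{\mathcal{H}}(\mathcal{S}w,\mathcal{S}w')\le\gamma d+2K\varrho d=\theta d$, so each $\mathcal{S}(S_j)$ has diameter at most $\theta d$ and the claim follows. This scaling invariance of the covering numbers in the compact embedding is what makes $N$ independent of the scale.

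Next I would iterate. Since $\mathbb{B}$ has finite $\rho_{\mathcal{H}}$-diameter $d_0$ and $\theta<\tfrac12$, writing $\vartheta:=2\theta\in(0,1)$ an easy induction gives that $\mathcal{S}^n\mathbb{B}$ is covered by $N^n$ balls of radius $\tfrac12\vartheta^n d_0$; equivalently the Kolmogorov entropy satisfies $\mathbb{H}_{\tfrac12\vartheta^n d_0}(\mathcal{S}^n\mathbb{B})\le n\log_2 N$. This single estimate drives both attraction and finite dimensionality. I would then assemble $\mathcal{M}_d$ by the standard Efendiev--Miranville--Zelik procedure, retaining the centers produced at each level (organised as a tree whose nodes lie in $\mathbb{B}$) together with finitely many of their forward iterates, and passing to the closure. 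Exponential attraction is then immediate, since every orbit obeys $\mathrm{dist}_{\mathcal{H}}(\mathcal{S}^n\mathbb{B},\mathcal{M}_d)\le\tfrac12\vartheta^n d_0$, i.e. geometric decay with rate $-\log\vartheta>0$; positive invariance $\mathcal{S}\mathcal{M}_d\subseteq\mathcal{M}_d$ holds because $\mathcal{M}_d$ is a union of forward images of a fixed seed, stable under closure; and finite fractal dimension follows from the entropy count, $\dim_F(\mathcal{M}_d)\le\log N/\log(1/\vartheta)<\infty$, the Lipschitz bound of the first step being what lets the appended forward images inherit this estimate.

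The main obstacle is precisely this last assembly step. The covering lemma and its iteration are essentially mechanical once the scaling invariance of covering numbers is exploited, but extracting from the tree of centers a single closed set that is simultaneously positively invariant, finite-dimensional and exponentially attracting is delicate: naively appending all forward iterates, along which $\mathcal{S}$ need not contract (its Lipschitz constant $\gamma+C_0KL$ may exceed one), can spoil the dimension bound. The resolution is the careful bookkeeping of \cite{EMZ,EZ}, in which the entropy estimate $\mathbb{H}_{\tfrac12\vartheta^n d_0}(\mathcal{S}^n\mathbb{B})\le n\log_2 N$ is used directly to control $\dim_F$, while the global Lipschitz continuity of $\mathcal{S}$ on $\mathbb{B}$ guarantees that only finitely many generations of forward images must be adjoined to secure invariance without destroying finiteness of the dimension.
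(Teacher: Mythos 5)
The paper does not prove this proposition at all: it is quoted verbatim as \cite[Proposition 3.1]{EZ}, so there is no in-paper argument to compare yours against. Your proposal reconstructs the standard Efendiev--Miranville--Zelik covering proof, and its core is correct. The covering lemma is the right key step and you execute it properly: the scale invariance of covering numbers for a ball of $\mathcal{V}_1$ under the compact linear embedding into $\mathcal{V}$ is exactly what makes $N$ independent of the diameter $d$, and the split $\gamma d + 2K\varrho d = \theta d$ with $\theta\in(\gamma,\tfrac12)$ correctly converts the smoothing estimate \eqref{contrass}--\eqref{lipschass} into a contraction of diameters with a uniformly bounded multiplicity. The iteration and the entropy bound $N^n$ balls of radius $O(\theta^n d_0)$ for $\mathcal{S}^n\mathbb{B}$ then give both the exponential attraction rate and the fractal dimension bound, and your identification of $\gamma<\tfrac12$ as the source of the factor $\vartheta=2\theta<1$ in the recentering bookkeeping is the right reading of that hypothesis.

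Two small corrections to the assembly step, neither of which breaks the argument. First, to secure positive invariance one must adjoin \emph{all} forward iterates of the centers, not ``finitely many'': the standard recursion is $E_{n}=\mathcal{S}E_{n-1}\cup V_{n}$, so $E_n=\bigcup_{k\le n}\mathcal{S}^{n-k}V_k$; the dimension survives not because the adjoined iterates are few but because $\mathcal{S}^{n-k}V_k\subset\mathcal{S}^{n}\mathbb{B}$ and $\#E_n\le\sum_{k\le n}N^k\le CN^n$, so the same entropy count closes. The global Lipschitz bound on $\mathcal{S}$ that you derived is needed only to pass invariance to the closure (and, elsewhere in the paper, to handle continuous time). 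Second, compactness of $\mathcal{M}_d=\overline{\bigcup_n E_n}$ follows from total boundedness only if $\mathcal{H}$ (or $\overline{\mathbb{B}}$) is complete, and $\mathcal{M}_d\subset\mathbb{B}$ requires $\mathbb{B}$ closed; these hypotheses are implicit in \cite{EZ} and are satisfied in the application here ($\mathcal{H}=\mathcal{X}_\eta$ complete, $\mathbb{B}$ an absorbing set one may close up). With those adjustments your outline is a faithful and essentially complete version of the cited proof.
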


\begin{proof}[Proof of Theorem \protect\ref{expattthm}]

Let $\mathcal{B}_0$ be a bounded absorbing set in $\mathcal{X}_\eta$. The
existence of such a bounded absorbing set has been proven in \cite{FG1}.
Indeed, it is immediate to check that the argument of \cite[Proposition 4]%
{FG1} still applies with our choice for the metric $\rho_{\mathcal{X}_\eta}$%
. Let $t_0=t_0(\mathcal{B}_0)\geq 0$ be a time such that $S_\eta(t)\mathcal{B%
}_0\subset\mathcal{B}_0$ for all $t\geq t_0$. Due to
% the instantaneous regularization result which follows from
%\cite[Proposition 1]{FGK}
\eqref{pregtau3} we can fix $t^\ast=t^\ast(\mathcal{B}_0)\geq t_0$ such that
$S_\eta(t)\mathcal{B}_0\subset B_{\mathcal{Z}_\eta^p}(0,\Lambda(\eta))$ for
all $t\geq t^\ast$, where $B_{\mathcal{Z}_\eta^p}(0,\Lambda(\eta))$ is the
closed ball in $\mathcal{Z}_\eta^p$ with radius $\Lambda(\eta)$ and $%
\Lambda(\eta)$ a positive constant which depends only on $\eta$. The
(complete) metric space $\mathcal{Z}_\eta^p$ is given by
\begin{align}
&\mathcal{Z}_\eta^p:=V_{div}\times\{\varphi\in W^{1,p}(\Omega):\:\: |%
\overline{\varphi}|\leq\eta\},
\end{align}
endowed with the metric
\begin{equation*}
d_{\mathcal{Z}_\eta^p}(z_2,z_1)=\|\nabla u_2-\nabla
u_1\|+\|\varphi_2-\varphi_1\|_{W^{1,p}(\Omega)}, \quad\forall
z_i:=[u_i,\varphi_i]\in\mathcal{Z}_\eta^p,\qquad i=1,2.
\end{equation*}
Note that the terms in the integrals of $F(\varphi_1),F(\varphi_2)$ are
omitted in the metric since, for $p>2$, we have the embedding $%
W^{1,p}(\Omega)\hookrightarrow C(\overline{\Omega})$.

Let us now set
\begin{align}
&\mathcal{B}_1:=\bigcup_{t\geq t^\ast}S_\eta(t)\mathcal{B}_0.
\end{align}
Then, $\mathcal{B}_1$ is bounded in $\mathcal{Z}_\eta^p$ and positively
invariant for $S_\eta(t)$. It is easy to see that it is also absorbing in $%
\mathcal{X}_\eta$. Indeed, if $B$ is a bounded subset of $\mathcal{X}_\eta$
and $t_0=t_0(B)$ is such that $S_\eta(t_0)B\subset\mathcal{B}_0$, then we
have $S_\eta(t)B\subset\cup_{\tau\geq
t^\ast}S_\eta(\tau+t_0)B\subset\cup_{\tau\geq t^\ast}S_\eta(\tau)\mathcal{B}%
_0=:\mathcal{B}_1$, for all $t\geq t_0+t^\ast$. Furthermore, we set
\begin{equation*}
\mathbb{B}:=S_\eta(1)\mathcal{B}_1.
\end{equation*}
Then, $\mathbb{B}\subset B_{\mathcal{Z}_\eta^p}(0,\Lambda(\eta))$ is
positively invariant and still absorbing in $\mathcal{X}_\eta$.

By choosing $\tau=1$ in Lemma \ref{expattlem2}, then \eqref{expatt8} can be
written as follows
\begin{align}
&\rho_{\mathcal{X}_\eta}\big(S_\eta(t)z_{02},S_\eta(t)z_{01}\big) \leq C_1
e^{-kt/2}\rho_{\mathcal{X}_\eta}\big(S_\eta(1)z_{02},S_\eta(1)z_{01}\big)
\notag \\
& +C_1\Vert
S_\eta(\cdot)z_{02}-S_\eta(\cdot)z_{01}\Vert_{L^2(1,t;G_{div}\times H)},
\qquad\forall t\geq 1,\qquad\forall z_{01},z_{02}\in\mathcal{X}_\eta,
\label{expatt21}
\end{align}
where $C_1>0$ depends only on $\mathcal{E}(z_{01})$, $\mathcal{E}(z_{02})$
and $\eta$. From \eqref{expatt21} we therefore get
\begin{align}
&\rho_{\mathcal{X}_\eta}\big(S_\eta(t-1)w_{02},S_\eta(t-1)w_{01}\big) \leq
C_1 e^{-kt/2}\rho_{\mathcal{X}_\eta}\big(w_{02},w_{01}\big)  \notag \\
&+C_1\Vert
S_\eta(\cdot)w_{02}-S_\eta(\cdot)w_{01}\Vert_{L^2(0,t-1;G_{div}\times H)},
\qquad\forall t>1,\qquad\forall w_{01},w_{02}\in\mathbb{B}.  \label{expatt22}
\end{align}
Observe that, since $w_{0i}=S(1)z_{0i}$, with $z_{0i}\in\mathcal{B}_1$, $%
i=1,2$, and $\mathcal{B}_1$ is bounded in $\mathcal{Z}_\eta^p$, then $C_1$
does not depend on $w_{01},w_{02}$.

Choosing $\tau=1$ also in Lemma \ref{expattlem1} and in Lemma \ref%
{expattlem3}, and combining \eqref{expatt0} with \eqref{expatt16} we can
write
\begin{align}
&\Vert S_\eta(\cdot)z_{02}-S_\eta(\cdot)z_{01}\Vert_{L^2(1,t;V_{div}\times
V)}^2 +\Vert \partial_t S_\eta(\cdot)z_{02}-\partial_t
S_\eta(\cdot)z_{01}\Vert_{L^2(1,t;V_{div}^{\prime}\times D(B_N)^{\prime})}^2
\notag \\
&\leq C_1 e^{C_1 t}\rho_{\mathcal{X}_\eta}^2(S_\eta(1)z_{02},S_%
\eta(1)z_{01}), \qquad\forall t\geq 1,\qquad\forall z_{01},z_{02}\in\mathcal{%
X}_\eta.
\end{align}
Thus we find
\begin{align}
&\Vert S_\eta(\cdot)w_{02}-S_\eta(\cdot)w_{01}\Vert_{L^2(0,t-1;V_{div}\times
V)}^2 +\Vert \partial_t S_\eta(\cdot)w_{02}-\partial_t
S_\eta(\cdot)w_{01}\Vert_{L^2(0,t-1;V_{div}^{\prime}\times
D(B_N)^{\prime})}^2  \notag \\
&\leq C_1 e^{C_1 t}\rho_{\mathcal{X}_\eta}^2(w_{02},w_{01}), \qquad\forall
t\geq 1,\qquad\forall w_{01},w_{02}\in\mathbb{B},  \label{expatt23}
\end{align}
where, as pointed out above, the constant $C_1$ does not depend on $w_{01}$ and $w_{02}$.

Let us now introduce the following spaces
%Let us apply Proposition \ref{expattr} taking the spaces
%$\mathcal{H},\mathcal{V},\mathcal{V}_1$ as follows
\begin{align*}
&\mathcal{H}:=\mathcal{X}_\eta=G_{div}\times\mathcal{Y}_\eta  \notag \\
&\mathcal{V}_1:=L^2(0,T;V_{div}\times V)\cap H^1(0,T;V_{div}^{\prime}\times
D(B_N)^{\prime}) \\
&\mathcal{V}:=L^2(0,T;G_{div}\times H),
\end{align*}
%where $\mathcal{Y}_\eta$ is given by
%$$\mathcal{Y}_\eta:=\{\varphi\in H: F(\varphi)\in L^1(\Omega),\:\:|\varphi|\leq\eta\},$$
with $T>0$ fixed such that $C_1 e^{-k(T+1)/2}<1/2$, where $C_1$ and $k$ are
the same constants that appear in the first term on the right-hand side of %
\eqref{expatt22}.
%It is convenient to endow the space $\mathcal{H}$ with the metric
%\begin{equation*}
%\rho_\mathcal{H}(z_2,z_1)=\|u_2-u_1\|+\|\varphi_2-\varphi_1\|
%+\Big|\int_{\Omega}F(\varphi_2)-\int_{\Omega}F(\varphi_1)\Big|,\quad\forall z_1,z_2\in\mathcal{H},
%\end{equation*}
%where $z_1:=[u_1,\varphi_1]$ and $z_2:=[u_2,\varphi_2]$.
%Notice that this metric is slightly different from the usual metric which is used in
%the space $\mathcal{X}_\eta$ and which is naturally associated to the energy $\mathcal{E}$
%(the difference is in the exponent in the third term, see \cite{FG1}).
Notice that, due to the Aubin-Lions lemma, $\mathcal{V}_1$ is compactly
embedded into $\mathcal{V}$.

Then, take $\mathcal{S}:=S_\eta(T)$ and define a map $\mathcal{T}:\mathbb{B}%
\to\mathcal{V}_1$ in the following way: for every $w_0\in\mathbb{B}$ we set $%
\mathcal{T}w_0:=w:=S_\eta(\cdot)w_0$, i.e., $w\in\mathcal{V}_1$ is the
(strong) solution corresponding to the initial datum $w_0$.

It is now easy to see that choosing the spaces $\mathcal{H},\mathcal{V},%
\mathcal{V}_1$, the set $\mathbb{B}$, and the maps $\mathcal{S}$, $\mathcal{T%
}$ as above, then the conditions of Proposition \ref{expattr}
%thanks to Lemma \ref{expattlem1}, Lemma \ref{expattlem2}
%and Lemma \ref{expattlem3}, the conditions of Proposition \ref{expattr}
are satisfied. Indeed, \eqref{contrass} and \eqref{lipschass} follow from %
\eqref{expatt8} and \eqref{expatt23}, respectively, both written for $t=T+1$%
.
%Furthermore, \eqref{lipschass} is fulfilled by combining \eqref{expatt0} with \eqref{expatt16}.

Therefore, Proposition \ref{expattr} entails the existence of a (discrete)
exponential attractor $\mathcal{M}_\eta^d\subset\mathbb{B}$ for the (time
discrete) semigroup $\{\mathcal{S}^n\}_{n=0,1,2,\dots}$ on $\mathbb{B}$
(with the topology of $\mathcal{H}$ induced on $\mathbb{B}$). Since $\mathbb{%
B}$ is absorbing in $\mathcal{H}$, then the basin of attraction of $\mathcal{%
M}_\eta^d$ is the whole phase space $\mathcal{H}$.

In order to prove the existence of the exponential attractor $\mathcal{M}%
_{\eta }$ for $(\mathcal{X}_{\eta },S_{\eta }(t))$ with continuous time we
observe first that \eqref{expatt18} written with $\tau =1$ (the time $T$ is
chosen as above) yields
\begin{equation*}
\rho _{\mathcal{X}_{\eta }}(S_{\eta }(t_{2}-1)w_{02},S_{\eta
}(t_{1}-1)w_{01})\leq C_{1,T}\big(\rho _{\mathcal{X}_{\eta
}}(w_{02},w_{01})+|t_{2}-t_{1}|^{1/2}\big),
\end{equation*}%
for all $w_{01},w_{02}\in \mathbb{B}$ and for all $t_{1},t_{2}\in \lbrack
1,1+T]$. Hence
\begin{equation*}
\rho _{\mathcal{X}_{\eta }}(S_{\eta }(t^{\prime \prime })w_{02},S_{\eta
}(t^{\prime })w_{01})\leq C_{1,T}\big(\rho _{\mathcal{X}_{\eta
}}(w_{02},w_{01})+|t^{\prime \prime }-t^{\prime }|^{1/2}\big),
\end{equation*}%
for all $w_{01},w_{02}\in \mathbb{B}$ and for all $t^{\prime \prime
},t^{\prime }\in \lbrack 0,T]$. Therefore, the map $[t,z]\mapsto S_{\eta
}(t)z$ is uniformly Hölder continuous (with exponent $1/2$) on $[0,T]\times
\mathbb{B}$, where $\mathbb{B}$ is endowed with the $\mathcal{H}- $metric.
Therefore, the exponential attractor $\mathcal{M}_{\eta }$ for the
continuous time case can be obtained by the classical expression
\begin{equation*}
\mathcal{M}_{\eta }=\bigcup_{t\in \lbrack 0,T]}S_{\eta }(t)\mathcal{M}_{\eta
}^{d},
\end{equation*}%
and this concludes the proof of the theorem.
\end{proof}

We conclude by proving the existence of exponential attractors when the
viscosity $\nu $ depends on $\varphi$ and satisfies the assumption (\ref{H2bis})
in Remark \ref{weak-H2}. In view of Theorems \ref{thmncvisc} and \ref%
{weak-uniq} we can define a dynamical system by using strong solutions.
Indeed, taking $d=2$ and $h\in G_{div}$, we have that for every fixed $\eta
\geq 0$ system \eqref{sy1}--\eqref{sy5} generates a semigroup $\{Z_{\eta
}(t)\}_{t\geq 0}$ of {\itshape closed} operators on the metric space $%
\mathcal{K}_{\eta }$ given by
\begin{equation*}
\mathcal{K}_{\eta }:=V_{div}\times \{\varphi \in H^{2}\left( \Omega \right)
:|\overline{\varphi }|\leq \eta \}
\end{equation*}%
endowed with the (weaker) metric
\begin{equation*}
\varrho (z_{2},z_{1})=\Vert u_{2}-u_{1}\Vert+\Vert \varphi _{2}-\varphi
_{1}\Vert,\quad \forall z_{i}:=[u_{i},\varphi _{i}]\in \mathcal{K}_{\eta },%
\text{ }i=1,2.
\end{equation*}

We are now ready to state and prove the following.

\begin{thm}
\label{strong-expo}Assume (H1), (H3)-(H5) and (\ref{H2bis}). Consider either
$J\in W^{2,1}(B_{\delta })$ or $J$ admissible. The dynamical system $(%
\mathcal{K}_{\eta },Z_{\eta }(t))$ possesses an exponential attractor $%
\mathcal{E}_{\eta }$ which is bounded in $V_{div}\times H^{2}\left( \Omega
\right) $ such that the following properties are satisfied:

\begin{itemize}
\item positive invariance: $Z_{\eta }(t)\mathcal{E}_{\eta }\subseteq
\mathcal{E}_{\eta }$ for all $t\geq 0$;

\item finite dimensionality: dim$_{F}(\mathcal{E}_{\eta },G_{div}\times
H)<\infty $;

\item exponential attraction: $\exists$ $Q:\mathbb{R}%
^{+}\rightarrow \mathbb{R}^{+}$ increasing and $\kappa >0$ such that, for
all $R>0$ and for all $\mathcal{B}\subset \mathcal{K}_{\eta }$ with $%
\sup_{z\in \mathcal{B}}\rho (z,0)\leq R$ there holds
\begin{equation*}
dist_{\mathcal{K}_{\eta }}(Z_{\eta }(t)\mathcal{B},\mathcal{E}_{\eta })\leq
Q(R)e^{-\kappa t},\qquad \forall t\geq 0.
\end{equation*}
\end{itemize}
\end{thm}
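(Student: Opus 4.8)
The plan is to reproduce, for strong solutions, the construction carried out in the constant viscosity case (Theorem \ref{expattthm}) and to invoke the abstract criterion of Proposition \ref{expattr}. The key point is that here the semigroup $Z_\eta(t)$ is defined on strong solutions, whose existence is furnished by Theorem \ref{thmncvisc} (using either $J\in W^{2,1}(B_\delta)$ or $J$ admissible, and $h\in G_{div}$) and whose uniqueness among weak solutions with the same datum is guaranteed by the weak--strong uniqueness of Theorem \ref{weak-uniq}. First I would establish dissipativity in the stronger space: I claim there is a set $\mathbb{B}$, bounded in $V_{div}\times H^2(\Omega)$, positively invariant for $Z_\eta(t)$ and absorbing in $\mathcal{K}_\eta$ for the metric $\varrho$. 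To produce it I would start from the energy dissipative estimate \eqref{dissest}, which gives a bounded absorbing set in $G_{div}\times H$ and, exactly as in the derivation of \eqref{sup-norm}--\eqref{holdi}, uniform-in-time control of $\Vert\varphi\Vert_{L^\infty(\Omega)}$ (hence of $\nu(\varphi)$ and of $b=a+F''(\varphi)$) from any positive time on. Then I would reread the a priori estimates of Theorem \ref{thmncvisc} in a dissipative fashion, i.e. interpret the differential inequalities \eqref{est25}, \eqref{idd3} and \eqref{est32b} as starting at a positive time $\tau$ and apply the uniform Gronwall lemma; this is the dissipative analogue of Proposition \ref{regthmtau} and it is what fixes $\mathbb{B}$ together with translation-bounded $L^2$ controls of $\Vert u_t\Vert$, $\Vert u\Vert_{H^2}$, $\varphi_t$ and of $\nabla\varphi$ in $L^4$.

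Next I would derive the difference estimates for two strong solutions $z_i=[u_i,\varphi_i]$, $i=1,2$, both issuing from $\mathbb{B}$, by reusing the weak--strong uniqueness computation. Since both trajectories are now strong, inequality \eqref{vv12} holds, and over $\mathbb{B}$ the coefficient $\Pi$ is translation bounded in $L^1$ uniformly in the data, by the previous step; this is the analogue of Lemma \ref{expattlem1}. Adding the Poincar\'e and Poincar\'e--Wirtinger inequalities and absorbing the mean-value term via $|\Omega|\,\overline\varphi^{\,2}\leq\Vert\varphi\Vert^2$, one obtains, as in Lemma \ref{expattlem2}, the contraction-up-to-compact-remainder inequality
\begin{align}
&\varrho\big(Z_\eta(t)z_{02},Z_\eta(t)z_{01}\big)
\leq C_1 e^{-kt/2}\varrho\big(Z_\eta(\tau)z_{02},Z_\eta(\tau)z_{01}\big)
\notag\\
&\quad+ C_1\big\Vert Z_\eta(\cdot)z_{02}-Z_\eta(\cdot)z_{01}\big\Vert_{L^2(\tau,t;G_{div}\times H)},
\notag
\end{align}
which matches \eqref{contrass}. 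Estimating $u_t$ in $V_{div}'$ and $\varphi_t$ in $D(B_N)'$ as in Lemma \ref{expattlem3}, and combining with the previous bound, yields the Lipschitz continuity \eqref{lipschass} of the solution map into $\mathcal{V}_1:=L^2(0,T;V_{div}\times V)\cap H^1(0,T;V_{div}'\times D(B_N)')$, which embeds compactly into $\mathcal{V}:=L^2(0,T;G_{div}\times H)$ by Aubin--Lions.

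With these ingredients I would apply Proposition \ref{expattr} with $\mathcal{H}=\mathcal{K}_\eta$ (weak metric $\varrho$), $\mathcal{V},\mathcal{V}_1$ as above and $\mathcal{S}=Z_\eta(T)$, choosing $T$ so large that $C_1 e^{-kT/2}<1/2$; this produces a discrete exponential attractor $\mathcal{E}_\eta^d\subset\mathbb{B}$. Passing to continuous time requires the uniform H\"older-in-time continuity of $[t,z]\mapsto Z_\eta(t)z$ on $[0,T]\times\mathbb{B}$, which follows as in Lemma \ref{expattlem4} from the translation-bounded bounds on $u_t$ in $G_{div}$ and on $\varphi_t$ in $H$; the attractor is then $\mathcal{E}_\eta=\bigcup_{t\in[0,T]}Z_\eta(t)\mathcal{E}_\eta^d$, bounded in $V_{div}\times H^2(\Omega)$ and of finite fractal dimension in $G_{div}\times H$. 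Since $\mathbb{B}$ is absorbing in $\mathcal{K}_\eta$, its basin of attraction is the whole phase space, which gives the exponential attraction property.

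I expect the main obstacle to be the dissipative upgrade of the regularity estimates of Theorem \ref{thmncvisc}. Unlike the constant viscosity case, the $H^2$ control of $u$ comes from the non-divergence Stokes estimate of Proposition \ref{Stok-non}, whose constant depends on $\Vert\nu(\varphi)\Vert_{C^\delta}$; one must therefore propagate the dissipative H\"older bound \eqref{holdi} for $\varphi$ uniformly in time and verify that all the coefficients in \eqref{est32b} (namely $\Vert u\Vert^2\Vert\nabla u\Vert^2$, $\Vert\nabla\varphi\Vert_{L^4}^4$ and $\Vert\varphi_t\Vert^2$, the last two controlled through \eqref{est38}) are translation bounded in $L^1$ on $(\tau,\infty)$ uniformly with respect to initial data in a bounded set, so that the uniform Gronwall lemma indeed applies. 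This is precisely the step where the variable viscosity forces us to work with strong solutions rather than weak ones, in accordance with Remark \ref{openuniq}.
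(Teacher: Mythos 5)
Your proposal follows essentially the same route as the paper: Step 1 upgrades the a priori estimates of Theorem \ref{thmncvisc} to a dissipative estimate via the uniform Gronwall lemma (using the uniform $L^\infty$ and H\"older bounds on $\varphi$ so that the constant in Proposition \ref{Stok-non} is controlled), Step 2 reuses inequality \eqref{vv12} from the weak--strong uniqueness proof together with the Poincar\'e inequalities to get the contraction-plus-compact-remainder estimate, and Step 3 invokes Proposition \ref{expattr} and passes to continuous time exactly as in Theorem \ref{expattthm}.

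There is one step where your plan, as literally stated, would fail: you propose to estimate $u_t=u_{2,t}-u_{1,t}$ in $V_{div}'$ ``as in Lemma \ref{expattlem3}.'' With nonconstant viscosity the weak formulation of the difference contains the extra term $\big((\nu(\varphi_1)-\nu(\varphi_2))\nabla u_1,\nabla v\big)$, which cannot be bounded by $C\Vert\varphi\Vert\,\Vert\nabla u_1\Vert\,\Vert \nabla v\Vert$ for $v\in V_{div}$: the Lipschitz bound on $\nu$ gives a factor $\Vert\varphi\Vert$ only in $L^2$, and pairing it with $\nabla u_1\in L^2$ leaves nothing with which to pay for $\nabla v$ unless $\nabla v\in L^\infty$. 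The paper therefore tests with $v\in W:=\big(H^{2+\varepsilon}(\Omega)\big)^2\cap V_{div}$ and estimates $u_t$ in the weaker dual $W'$ (see \eqref{diss-est2}--\eqref{diss-est6}); the space $\mathcal{V}_1$ is adjusted accordingly to $L^2(0,T;V_{div}\times V)\cap H^1(0,T;W'\times D(B_N)')$, which still embeds compactly into $L^2(0,T;G_{div}\times H)$ by Aubin--Lions, so the rest of your argument goes through unchanged once this substitution is made.
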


\begin{proof}
\emph{Step 1.} We will briefly show that a dissipative estimate like (\ref%
{pregtau3}) still holds for the strong solution of \eqref{sy1}--\eqref{sy5}
under the assumptions of the theorem. More precisely, the following estimate
holds
\begin{equation}
\Vert \nabla u(t)\Vert +\Vert \varphi (t)\Vert _{H^{2}(\Omega
)}+\int_{t}^{t+1}\Vert u(s)\Vert _{H^{2}(\Omega )^{2}}^{2}ds\leq \Theta
(\eta ),\qquad \forall t\geq t^{\ast }.  \label{diss-strong}
\end{equation}%
for some positive constant $\Theta$ independent of the initial data and
time, and some time $t_{\#}>0$ which depends only $\mathcal{E}(z_{0})$. In
order to get this estimate, first we recall estimate (\ref{dissest}) by
Theorem \ref{thm} which also holds for nonconstant viscosity. The proof of (%
\ref{diss-strong}) follows immediately from the proof of Theorem \ref%
{thmncvisc}. Indeed, we observe preliminarily that (\ref{sup-norm}), (\ref%
{holdi}) and (\ref{variant2}) already hold uniformly with respect to time
and initial data in the nonconstant case, i.e., there exists a time $%
t_{\#}>0 $, depending only on $\mathcal{E}(z_{0}),$ such that%
\begin{equation}
\varphi \in L^{\infty }\left( t_{\#},\infty ;L^{\infty }\left( \Omega
\right) \cap V\right) \cap W^{1,2}\left( t_{\#},\infty ;H\right)
\label{sup-norm2}
\end{equation}%
and%
\begin{equation}
\sup_{t\geq t_{\#}}\left\Vert \varphi \right\Vert _{C^{\delta /2,\delta
}\left( \left[ t,t+1\right] \times \overline{\Omega }\right) }\leq \Theta
(\eta ).  \label{sup-norm3}
\end{equation}%
In particular, this regularity allows us to obtain $\mu \in L^{\infty
}\left( t_{\#},\infty ;L^{\infty }\left( \Omega \right) \cap V\right) $ and $%
l\in L^{2}(t_{\#},\infty ;\left( L^{2}\left( \Omega \right) \right) ^{2})$
uniformly. This can be done by arguing exactly in the same fashion as in the
derivation of estimates (\ref{est25})-(\ref{est25bis}), with the exception
that the constant $R>0$ is such that $\mathrm{ess}\sup_{t\in \left(
t_{\#},\infty \right) }\left\Vert \varphi \left( t\right) \right\Vert
_{L^{\infty }}\leq R$. Then, we can employ the same procedure as in the
proof of Theorem \ref{thmncvisc} (with a function $Q=Q%
\left( R\right) >0$ which is now independent of the initial data, by (\ref%
{sup-norm2})-(\ref{sup-norm3})) to deduce by virtue of the uniform Gronwall
lemma (see \cite[Chapter III, Lemma 1.1]{T}) that
\begin{equation}
u\in L^{\infty }(t_{\ast },\infty ;V_{div})\cap L^{2}(t_{\ast },\infty
;H^{2}(\Omega )^{2}),\quad u_{t}\in L^{2}(t_{\ast },\infty ;G_{div}),
\label{diss-strong2}
\end{equation}%
for some $t_{\ast }\geq 1$ depending only on $t_{\#}.$ Finally, arguing
exactly as in the proof of Theorem \ref{thmncvisc} we deduce $\varphi \in
L^{\infty }\big(t_{\ast },\infty ;H^{2}(\Omega )\big)$ uniformly with
respect to time and the data. Note that estimate (\ref{diss-strong}) entails
the existence of a bounded absorbing set $\mathcal{B}_{2}\subset \mathcal{K}%
_{\eta }$ for the semigroup $Z_{\eta }(t).$

\emph{Step 2.} As in the proof of Theorem \ref{expattthm}, it will be
sufficient to construct the exponential attractor for the restriction of $%
Z_{\eta }(t)$ on this set $\mathcal{B}_{2}$. Thus, it suffices to verify the
validity of Lemmas \ref{expattlem2} and \ref{expattlem3} for the difference
{\color{black}$u=u_{2}-u_{1},$ $\varphi =\varphi _{2}-\varphi _{1}$},
where $\left(u_{i},\varphi _{i}\right) $ is a (given) strong solution and $i=1,2.$ The
first one is an immediate consequence of estimate (\ref{vv12}) (see the
proof of Theorem \ref{weak-uniq}) and the application of Poincaré-type
inequalities (\ref{PWineq}) (see the proof of Lemma \ref{expattlem2}).
Indeed, in the nonconstant case we have {\small
\begin{align}
& \Vert u(t)\Vert ^{2}+\Vert \varphi (t)\Vert ^{2}  \notag \\
& \leq C\big(\Vert u(\tau ))\Vert ^{2}+\Vert \varphi (\tau )\Vert ^{2}\big)%
e^{-kt}+C\int_{\tau }^{t}\big(\Vert u(s)\Vert ^{2}+\Vert \varphi (s)\Vert
^{2}\big)ds,\qquad \forall t\geq \tau ,  \label{diss-est1}
\end{align}%
}for some constant $C=C_{\tau }>0$, where $\left( u_{i}\left( \tau \right)
,\varphi _{i}\left( \tau \right) \right) \in \mathcal{B}_{2}$ for each $%
i=1,2.$ For the second one, we observe that in order to estimate $%
u_{t}:=u_{2,t}-u_{1,t}$, we have%
\begin{align}
\langle u_{t},v\rangle & =-(\nu \left( \varphi _{2}\right) \nabla u,\nabla
v)-(\left( \nu \left( \varphi _{1}\right) -\nu \left( \varphi _{2}\right)
\right) \nabla u_{1},\nabla v)  \notag \\
& -b(u_{2},u_{2},v)+b(u_{1},u_{1},v)  \notag \\
& -\frac{1}{2}\big(\nabla a\varphi (\varphi _{1}+\varphi _{2}),v\big)-\big(%
(J\ast \varphi )\nabla \varphi _{2},v\big)-\big((J\ast \varphi _{2})\nabla
\varphi ,v\big),  \label{diss-est2}
\end{align}%
for all $v\in W:=\left( H^{2+\varepsilon }\left( \Omega \right) \right)
^{2}\cap V_{div}$ and some $\varepsilon >0$ (such that the embedding $%
H^{2+\varepsilon }\subset W^{1,\infty }$ holds). While all the terms on the
right-hand side of (\ref{diss-est2}), with the exception of the first two,
can be word by word estimated exactly as in the proof of Lemma \ref%
{expattlem3}, we notice that assumption (\ref{H2bis}) and the essential $%
L^{\infty }$-bound on $\varphi $ yield%
\begin{align}
\left\vert (\nu \left( \varphi _{2}\right) \nabla u,\nabla v)\right\vert &
\leq C\left\Vert \nabla u\right\Vert \left\Vert \nabla v\right\Vert ,
\label{diss-est3} \\
\left\vert (\left( \nu \left( \varphi _{1}\right) -\nu \left( \varphi
_{2}\right) \right) \nabla u_{1},\nabla v)\right\vert & \leq C\left\Vert
\varphi \right\Vert \left\Vert \nabla u_{1}\right\Vert \left\Vert
v\right\Vert _{H^{2+\varepsilon }}.  \notag
\end{align}%
Thus, we easily get%
\begin{equation}
\left\Vert u_{t}\right\Vert _{W^{\prime}}\leq C\left( \left\Vert \nabla
u\right\Vert +\left\Vert \varphi \right\Vert \right) ,\quad \forall t\geq
\tau ,  \label{diss-est4}
\end{equation}%
which together with (\ref{vv12}) and (\ref{diss-est5}) yields the following
estimate
\begin{equation}
\Vert u_{t}\left( t\right) \Vert _{L^{2}(\tau ,t;W^{\prime })}^{2}+\Vert
\varphi _{t}||_{L^{2}(\tau ,t;D(B_{N})^{\prime })}^{2}\leq Ce^{Ct}\big(\Vert
u(\tau )\Vert ^{2}+\Vert \varphi (\tau )\Vert ^{2}\big),\quad \forall t\geq
\tau .  \label{diss-est6}
\end{equation}%
Estimates (\ref{diss-est1}) and (\ref{diss-est6}) convey that a certain
smoothing property holds for the difference of any two strong solutions
associated with any two given initial data in $\mathcal{B}_{2}$.

\emph{Step 3}. It is now not difficult to finish the proof of the theorem,
using the abstract scheme of Proposition \ref{expattr} by arguing in a
similar fashion as in the proof of Theorem \ref{expattthm}. The differences
are quite minor and so we leave them to the interested reader.
\end{proof}

\begin{oss}
{\upshape
On account of \cite[Proofs of Proposition 1 and Lemma 3]{FGK} and {%
\color{black}\eqref{est19}}, using uniform Gronwall's lemma (see \cite[Chapter
III, Lemma 1.1]{T}), it is possible to show that any weak solution becomes a
strong solution in finite time. We remind that this property is based on the
validity of the energy identity \eqref{eniden}. Indeed, estimate %
\eqref{diss-strong} ensures that, given a weak trajectory $z$ starting from $%
z_{0}\in \mathcal{X}_{\eta }$ (cf. \eqref{phasespace}), there exists a time $%
t^{\ast }=t^{\ast }(z_{0})\geq 0$ such that $z(t)\in \mathcal{B}_{1}(\Lambda
(\eta ))$ for all $t\geq t^{\ast }$, where $\mathcal{B}_{1}(\Lambda (\eta ))$
is the closed ball in the space $V_{div}\times H^{2}(\Omega )$ with radius $%
\Lambda (\eta )$ and constraint $|\overline{\varphi }|\leq \eta $. Let us
briefly mention some consequences of this property. First, the global
attractor of the generalized semiflow on $\mathcal{X}_{\eta }$ generated by
the problem with nonconstant viscosity (see \cite{FG1}) is bounded in $%
V_{div}\times H^{2}(\Omega )$. Therefore we can show the validity of a
smoothing property (cf. \eqref{diss-est1} and \eqref{diss-est6}) on the
global attractor and deduce that it has finite fractal dimension. Moreover,
the regularizing effect also allows us to prove the precompactness of (weak)
trajectories (see \cite[Lemma 3]{FGK}). This is an essential ingredient to
establish the convergence of a weak solution to a single equilibrium which
can be done along the lines of \cite[Section 5]{FGK}. }
\end{oss}

{\color{black}
\section{Conclusions}

Uniqueness of a weak solution was proven for the nonlocal Cahn-Hilliard-Navier-Stokes in two dimensions with constant viscosity. This result holds either for a regular or a singular potential and also for singular potentials and degenerate mobility. Uniqueness of weak solutions seems out of reach if viscosity in the Navier-Stokes equations depends on $\varphi$.  Therefore we established first the existence of a strong solution, a nontrivial result in itself. Then we show weak-strong uniqueness. This was done by assuming constant mobility and regular potential. In the case of constant viscosity and singular potential, the existence of a strong solution seems difficult to obtain. However, this can be achieved when the mobility is
degenerate, provided some natural assumptions are satisfied (though we gave no proof here). On account of this, weak-strong uniqueness can also be demonstrated for nonconstant viscosity, degenerate mobility and singular potential. In the last section
we investigated the global longtime behavior of the corresponding dynamical system. Uniqueness of weak solutions allowed us
to prove the connectedness of the global attractor whose existence was obtained elsewhere. Then we established the existence
of an exponential attractors for weak solutions (constant mobility and regular potential). Finally, in the case of variable
viscosity, we showed that an exponential attractor can be still constructed by using strong solutions. These last two results essentially depend on the continuous dependence estimates which entail uniqueness.

\bigskip
\textbf{Acknowledgments}. The authors thank the reviewers for their remarks and suggestions. The first author was supported
by FP7-IDEAS-ERC-StG Grant $\sharp$256872 (EntroPhase). The first and third authors are members of the Gruppo Nazionale per l'Analisi Matematica, la Probabilit\`{a} e le loro Applicazioni (GNAMPA) of the Istituto Nazionale di Alta Matematica (INdAM).}

\end{document}